\def\input@path{{./figures/}{./bibliography/}}
\newcommand{\eps}{{\varepsilon}}
\newcommand{\Schwartz}{{\mathcal{S}}}
\newcommand{\Sphere}{{\boldsymbol{S}}}
\newcommand{\Real}{{\boldsymbol{R}}}
\newcommand{\Hausdorff}{{\mathcal{H}}}
\newcommand{\One}{{\boldsymbol{1}}}
\newcommand{\mres}{\mathbin{\vrule height 1.6ex depth 0pt width 0.13ex\vrule
height 0.13ex depth 0pt width 1.3ex}}
\newtheorem{theorem}{Theorem}[section]
\newtheorem{lemma}{Lemma}[section]
\newtheorem{definition}{Definition}[section]
\newtheorem{corollary}{Corollary}[section]
\newtheorem{remark}{Remark}[section]
\newtheorem{proposition}{Proposition}[section]
\begin{document}
\title{Some properties of a Hilbertian norm for perimeter}
\author[F. Hern{\'a}ndez]{Felipe Hern{\'a}ndez}
\date{\today}
\email{felipeh@alum.mit.edu}

\begin{abstract}
    We investigate a relationship first described in~\cite{figalli2014recognize}
    between the perimeter of a set and a related fractional Sobolev norm.  In
    particular, we derive a new characterization of sets of finite perimeter, and
    demonstrate that the fractional Sobolev norm does not recover the $BV$
    norm but rather a certain quadratic integral.
\end{abstract}

\maketitle
In a recent paper of Jerison and Figalli~\cite{figalli2014recognize}, a
relationship is developed between the perimeter of a set and a fractional
Sobolev norm of its indicator function.  More precisely, letting $\gamma(x)$
denote the standard Gaussian in $\Real^n$, and
defining the scaled Gaussian $\gamma_\eps(x) = \eps^{-n}\gamma(x/\eps)$,
Jerison and Figalli showed that
\begin{equation}
    \limsup_{\eps\to 0^+} \frac{1}{|\log\eps|}\|\gamma_\eps\ast\One_E\|_{H^{1/2}}^2
    \simeq
    \liminf_{\eps\to 0^+} \frac{1}{|\log\eps|}\|\gamma_\eps\ast\One_E\|_{H^{1/2}}^2
    \simeq P(E).
    \label{H12-norm}
\end{equation}
Here $E$ is a set of finite perimeter and $\One_E$ is its indicator function.
The definition of the $H^{1/2}$ norm that we shall use for the paper is given
in Section~\ref{basic-ests-sec}.
The formula is remarkable for its quadratic scaling and for its apparent
Fourier-analytic nature.  The motivation for writing down this expression for
the perimeter actually came from a purely geometric question about characterizing
convex sets in terms of there marginals.

A very similar quantity has appeared in the literature before, in
the foundational work of Bourgain, Brezis, and Mironescu~\cite{bourgain2001another}.
There the one-dimensional expression
\[
    \lim_{\eps\to 0}\frac{1}{|\log\eps|}
    \int\int_{|x-y|>\eps}\frac{|f(x)-f(y)|^2}{|x-y|^2}\,dxdy
\]
appears as a remark referencing an earlier unpublished work of Mironescu
and Shafrir.  The motivation for studying this functional came from
studying $\Gamma$-limits of Ginzburg-Landau type functionals, as for example
in~\cite{kurzke2006boundary, kurzke2006nonlocal}.  More recently Poliakovsky
introduced a similar functional in connection to the $\Gamma$-limit of the
Aviles-Gila problem~\cite{poliakovskypreprint}.  Poliakovsky introduced the
notion of $BV^q$ spaces and showed that a certain nonlocal functional very
similar to that appearing in Equation~\eqref{H12-norm} captures the $L^q$ norm of the
jump set of a function.\footnote{The reader is invited to compare the results of this
    paper with Theorem 1.1 of~\cite{poliakovskypreprint}.  It is not clear to the
author if there are any direct implications in either direction, but the works
certainly seem related.}

We mention also several other works which relate perimeter and total variation
to nonlocal functionals.  A paper of Leoni and
Spector~\cite{leoni2011characterization} studies a fractional Sobolev expression
that recovers the total variation of a function.  The main difference is that
the integrand in their expression scales linearly in the function $u$, whereas
the formula~\eqref{H12-norm} scales quadratically.  Similarly, a recent paper
of Ambrosio, Bourgain, Brezis, and Figalli~\cite{ambrosio2016bmo} introduced a
very interesting $BMO$-type norm which recovers the perimeter of a set.
In~\cite{fusco2016formula} this norm was shown to also give the total variation
of functions in $SBV$.

\subsection{Summary of Results}
In this paper we prove several results with the purpose of trying to better
understand~\eqref{H12-norm}.  The first question we investigate is what happens to
sets that do not have finite perimeter.
The answer comes in two parts.  First, we show in Theorem~\ref{example-thm}
that there is a set $E\subset[0,1]^n$ for which the limit inferior
in~\eqref{H12-norm} vanishes.
We do this by showing it is possible to construct a set of infinite perimeter such
that, for an sequence $\eps_k\to 0$ the functions
$\gamma_{\eps_k}\ast \One_E$ are much smoother than $\gamma_{\eps_k}$.  The
construction is presented in Section~\ref{construction-section}.

The second part of the answer is that the limit superior does characterize
sets of finite perimeter.  This is proven in Section~\ref{characterize-sec}
using in a strong way the $L^2$ structure of the norm.  The proof relies on the
characterization of sets of finite perimeter provided in~\cite{ambrosio2016bmo}.

The next question this paper addresses is whether the limit in the
expression~\eqref{H12-norm} exists at all.  From the previous results it is clear
that the limit cannot always exist, as  it is possible for
\[
    0 = \liminf_{\eps\to 0}\frac{1}{|\log\eps|}\|\gamma_\eps\ast\One_E\|_{H^{1/2}}^2
    < \limsup_{\eps\to 0}\frac{1}{|\log\eps|}\|\gamma_\eps\ast\One_E\|_{H^{1/2}}^2
    = \infty
\]
when $E$ is a set of infinite perimeter.  We show however that this cannot
be the case when $E$ is a set of finite perimeter.  Indeed, we prove in
Theorem~\ref{sofp-thm} that
\[
    \lim_{\eps\to 0}\frac{1}{|\log\eps|}\|\gamma_\eps\ast\One_E\|_{H^{1/2}}^2
    = c_n P(E).
\]
The argument consists of a few local calculations to cover the smooth case
and is finished by classical structural results about sets of finite perimeter and a
typical covering argument.

The question of what happens to other functions in $BV$ remains open, and we
are only able to give partial results.  Our first result is that the leading order
in the divergence of the $H^{1/2}$ norm only recovers information about the jump
set.  That is, under the condition that the total variation of a function in
$u\in BV\cap L^\infty(\Real^n)$ vanishes on all sets of Hausdorff dimension $n-1$,
\[
    \lim_{\eps\to 0}\frac{1}{|\log\eps|}\|\gamma_\eps\ast u\|_{H^{1/2}}^2
    = 0.
\]
This can be interpreted as a trichotomy that it interesting in its own right:
if a function $u$ has sufficient energy at high frequencies (as measured by the
$H^{1/2}$ norm of $\gamma_\eps\ast u$), then it either fails to have bounded
variation, it is unbounded, or it has a jump discontinuity.  This result
is stated in a more quantitative form as Theorem~\ref{jump-bound-thm}
in Section~\ref{no-jumps-sec}.

We are able to use this result to completely resolve the situation in one
dimension.  In fact, Theorem~\ref{one-dim-thm} implies that for $u\in BV(\Real)$
\[
    \lim_{\eps\to 0}\frac{1}{|\log\eps|}\|\gamma_\eps\ast u\|_{H^{1/2}}^2
    = c_1 \sum_{x\in J_u} |u^+(x)-u^-(x)|^2.
\]
where $J_u$ is the jump part of $u$, as described for example
in~\cite[Section 5.9]{evans2015measure} or~\cite[Section 3.8]{ambrosio2000functions}.
This one-dimensional formula is suggestive of what should occur in higher
dimensions, but we are not able to prove anything definitive here.

\subsection{A note on organization}
Preliminary results and basic definitions appear in Section~\ref{basic-ests-sec}.
Each of the other sections can be read independently of each other, so the
reader should feel free to skip to the result that is most interesting to them.

\subsection{Acknowledgements}
I am grateful to David Jerison, who suggested this problem to me and helped me
to focus the investigation.  I would also like to thank Francesco Maggi for
providing the reference for Theorem~\ref{C1-approx}.  Daniel Spector pointed
out to me the very relevant citation~\cite{poliakovskypreprint}.  The author is supported
by the John and Fannie Hertz Foundation Fellowship.

\section{Setup and Basic Estimates}
\label{basic-ests-sec}
First we define more clearly the Sobolev norm that we shall use.  Given a
smooth function $u\in \Schwartz(\Real^n)$ with rapid decay, we define
\[
    \|u\|_{H^{1/2}}^2 = \int|\xi||\widehat{u}(\xi)|^2\,d\xi.
\]
We would like to study the expression
\[
    \frac{1}{|\log\eps|} \|\gamma_\eps \ast u\|_{H^{1/2}}^2
\]
for functions $u\in BV\cap L^\infty(\Real^n)$ by decomposing the contributions
from different scales.  To do this, we write
\[
    \|\gamma_\eps \ast u\|_{H^{1/2}}^2
    = \sum_{k=1}^{|\log\eps|} \eps^{-1}2^{-k} \|\varphi_{\eps2^k} \ast u\|_{L^2}^2
    + O(\|u\|_{L^2})
\]
with the convolution kernel $\varphi$ chosen such that $\widehat{\varphi}(\xi)$ is
nonnegative and
\begin{equation}
    |\widehat{\varphi}(\xi)|^2 = |\xi|\left(|\widehat{\gamma}(\xi)|^2
    - |\widehat{\gamma}(2\xi)|^2\right).
    \label{special-kernel-defn}
\end{equation}
Observe that $|\xi|^{-3/2}\widehat{\varphi}(\xi)$ is continuous and bounded, and that
$\widehat{\varphi}(\xi)$ is smooth outside the origin.  From this we deduce that $f$
has the decay
\[
    |\varphi(x)| \leq C (1+|x|)^{-\frac{3}{2}-n}.
\]
In particular, $|x|\varphi \in L^1(\Real^n)$.  In addition, $\varphi$ is smooth and
satisfies the cancellation condition $\int \varphi = 0$.  These three conditions
are sufficient for us to prove the following elementary estimates that we
will use throughout the paper.
\begin{lemma}
    Let $f$ be any smooth function such that $\int f = 0$ and
    $|x|f \in L^1(\Real^n)$.
    Then we have the global $L^\infty$ bound
    \begin{equation}
        \|f_r\ast u\|_{L^\infty} \leq C \|u\|_{L^\infty},
        \label{Linf-conv-leq}
    \end{equation}
    and the $BV$ bound
    \begin{equation}
        \frac{1}{r}\|f_r\ast u\|_{L^1} \leq C \|u\|_{BV}.
        \label{BV-conv-leq}
    \end{equation}
    Moreover, for every $\eps>0$, there exists $C(\eps)$
    such that the pointwise bound
    \begin{equation}
        |f_r\ast u(x)| \leq C r^{1-n} |Du|(B_{C(\eps)r}(x)) + \eps\|u\|_{L^\infty}.
        \label{local-conv-leq}
    \end{equation}
    holds for any $x\in\Real^n$ and $r>0$ and the local bounds
    \begin{align}
        \frac{1}{r}\|f_r\ast u\|_{L^1(A)}
        &\leq C |Du|(A+B_{C(\eps)r}) + \eps\|u\|_{BV}
        \label{local-l1-bd}\\
        \|f_r\ast u\|_{L^\infty(A)}
        &\leq C\|u\|_{L^\infty(A+B_{C(\eps)r})} + \eps\|u\|_{L^\infty}
        \label{local-linf-bd}
    \end{align}
    are satisfied for any measurable set $A\subset\Real^n$.
    \label{basic-ests}
\end{lemma}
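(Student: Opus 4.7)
The plan is to prove all five estimates by a common near--far decomposition of the convolution: fix a large constant $M = C(\eps)$ to be chosen depending on $\eps$, and split
\[
    f_r * u(x) = \int_{|y-x|\leq Mr} f_r(x-y)\,u(y)\,dy + \int_{|y-x|>Mr} f_r(x-y)\,u(y)\,dy.
\]
The near-field will be controlled by the local regularity or size of $u$, while the far-field will be made small using $f \in L^1$ or $|x|f \in L^1$. Note $f \in L^1$ is automatic: smoothness handles the unit ball, and on its complement $|f(x)| \leq |x||f(x)|$.

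The global bounds are easiest. For \eqref{Linf-conv-leq}, Young's inequality gives $\|f_r * u\|_{L^\infty} \leq \|f\|_{L^1}\|u\|_{L^\infty}$ (scaling is trivial). For \eqref{BV-conv-leq}, the cancellation $\int f = 0$ lets us write $f_r * u(x) = \int f_r(y)\bigl(u(x-y) - u(x)\bigr)\,dy$; then Minkowski combined with the standard $BV$ translation estimate $\|u(\cdot - y) - u\|_{L^1} \leq |y||Du|(\Real^n)$ yields $\|f_r*u\|_{L^1} \leq \||y|f_r\|_{L^1}\|u\|_{BV} = r\,\||x|f\|_{L^1}\|u\|_{BV}$.

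For the pointwise bound \eqref{local-conv-leq}, I use $\int f = 0$ to rewrite $f_r*u(x) = \int f_r(x-y)\bigl(u(y) - c\bigr)\,dy$ with $c = \fint_{B_{Mr}(x)} u$. The far part is bounded by $2\|u\|_{L^\infty}\int_{|z|>M}|f(z)|\,dz$, which is at most $\eps\|u\|_{L^\infty}$ if $M = C(\eps)$ is large enough. For the near part, I use $\|f_r\|_{L^\infty} \leq \|f\|_{L^\infty} r^{-n}$ together with the $BV$ Poincar\'e--Wirtinger inequality $\|u - u_{B_{Mr}(x)}\|_{L^1(B_{Mr}(x))} \lesssim Mr\,|Du|(B_{Mr}(x))$ to obtain $Cr^{1-n}|Du|(B_{Mr}(x))$, with $M = C(\eps)$ absorbed into the constant. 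The local bounds \eqref{local-l1-bd} and \eqref{local-linf-bd} use the same split: for \eqref{local-l1-bd} the near part is handled by Fubini applied to $\int_A\int_{|z|\leq Mr}|f_r(z)||u(x+z)-u(x)|\,dz\,dx$ and the translation estimate localized as $\int_A |u(x+z) - u(x)|\,dx \leq |z||Du|(A + B_{|z|})$, giving $CM^{n+1}\,r\,|Du|(A + B_{Mr})$; the far part becomes $\int_{|z|>Mr}|f_r(z)||z|\,dz \cdot \|u\|_{BV} = r\int_{|z|>M}|z||f(z)|\,dz \cdot \|u\|_{BV}$, which is at most $\eps r\|u\|_{BV}$ once $M = C(\eps)$ is large. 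Estimate \eqref{local-linf-bd} is a direct application of the split: the near part is $\leq \|f\|_{L^1}\|u\|_{L^\infty(A+B_{Mr})}$, the far part $\leq \|u\|_{L^\infty}\int_{|z|>M}|f|$.

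The main subtlety I expect is keeping the two distinct hypotheses $f \in L^1$ and $|x|f \in L^1$ straight: the first controls the tail $\int_{|z|>M}|f|$ needed for the $L^\infty$-type estimates \eqref{Linf-conv-leq}, \eqref{local-conv-leq}, \eqref{local-linf-bd}, while the second is what controls the $BV$-scale tail $\int_{|z|>M}|z||f|$ appearing in \eqref{BV-conv-leq} and \eqref{local-l1-bd}. The only other non-routine point is the $BV$ Poincar\'e--Wirtinger inequality on balls invoked in \eqref{local-conv-leq}, which is a classical consequence of the $(1,1)$-Poincar\'e inequality for $BV$ functions.
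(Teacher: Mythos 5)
Your proof is correct (modulo two small caveats below) and takes a genuinely cleaner route than the paper's on two of the estimates, so a comparison is in order.

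For \eqref{BV-conv-leq} the paper goes through the fundamental theorem of calculus on smooth $u$, a change of variables $w=ty$ to produce an auxiliary kernel $g(r,w)=r^{-1}|w|\int_0^1 t^{-n-1}|f_r(w/t)|\,dt$, Young's inequality against $|\nabla u|$, and then a smooth-approximation argument to reach general $u\in BV\cap L^\infty$. You instead invoke the classical translation estimate $\|u(\cdot-y)-u\|_{L^1}\leq|y|\,|Du|(\Real^n)$, which already holds for all $u\in BV$, so you get $\|f_r*u\|_{L^1}\leq\||x|f\|_{L^1}\,r\,\|u\|_{BV}$ in one line with no approximation step. The two routes are morally the same (the translation estimate is the integrated form of the FTC computation) but yours is tidier. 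For \eqref{local-conv-leq}, the paper truncates the kernel to $\tilde f_r=f_r\One_{\{|y|<Cr\}}$, runs the same FTC-plus-change-of-variables machine to get a truncated auxiliary kernel $\tilde g$, and asserts $\|\tilde g(r,\cdot)\|_{L^\infty}\leq Cr^{-n}$; you instead subtract the local average over $B_{Mr}(x)$ and combine $\|f_r\|_{L^\infty}\leq r^{-n}\|f\|_{L^\infty}$ with the $BV$ Poincar\'e--Wirtinger inequality on the ball. Your route is the more standard and, I think, the safer one: a direct computation shows $\tilde g(1,w)\sim c|w|^{1-n}$ as $w\to 0$, which is not in $L^\infty$ when $n>1$, so the paper's asserted $L^\infty$ bound on $\tilde g$ is at best suspicious in higher dimensions, whereas the Poincar\'e route goes through unambiguously.

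Two caveats on your version. First, your near-field bound for \eqref{local-conv-leq} gives a prefactor $C_n M\|f\|_{L^\infty}$, i.e. the constant multiplying $r^{1-n}|Du|(B_{Mr}(x))$ depends on $M=C(\eps)$, whereas the lemma as stated asserts an $\eps$-independent $C$ (with only the ball radius depending on $\eps$). You acknowledge this by saying $M$ is ``absorbed into the constant,'' but be explicit that this makes the constant $\eps$-dependent. This is strictly weaker than what is written, though it is harmless for every application in the paper: in the proof of Theorem~\ref{jump-bound-thm} the estimate is used only at a fixed $\eps$ (e.g.\ $\eps=1/4$), so an $\eps$-dependent constant is perfectly adequate. (And, as noted, the paper's own argument does not convincingly achieve $\eps$-independence either.) Second, both your argument and the paper's silently use that $f$ is bounded (you via $\|f_r\|_{L^\infty}$, the paper via $\tilde g\in L^\infty$); smoothness plus $|x|f\in L^1$ does not formally imply $\|f\|_{L^\infty}<\infty$, so strictly speaking one should either add boundedness to the hypotheses or observe that the kernel $\varphi$ actually used satisfies the explicit decay $|\varphi(x)|\leq C(1+|x|)^{-3/2-n}$ and is therefore bounded. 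Everything else in your write-up --- the near/far split at scale $Mr$, Fubini plus the localized translation estimate $\int_A|u(x+z)-u(x)|\,dx\leq|z|\,|Du|(A+B_{|z|})$ for \eqref{local-l1-bd}, and the direct support-tracking for \eqref{local-linf-bd} --- is correct and matches the paper's conclusions.
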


\begin{proof}
The first estimate, Equation~\eqref{Linf-conv-leq}, follows from the fact
that $f\in L^1$ and the scale invariance $\|f_r\|_{L^1} = \|f\|_{L^1}$.

We first prove the bound~\eqref{BV-conv-leq} for smooth functions.  Using
the fundamental theorem of calculus,
\begin{equation*}
    |u(x-y)-u(x)| \leq \int_0^1 |\nabla u(x-ty)| |y|\,dt.
\end{equation*}
Combining this with the cancellation condition $\int f=0$, and then
applying the change of variables $w = ty$,
\begin{align}
    \begin{split}
        \frac{1}{r}|f_r\ast u(x)|
        &\leq \frac{1}{r}\int_{\Real^n} |u(x-y)-u(x)||f_r(y)|\,dy \\
        &\leq \frac{1}{r}\int_{\Real^n}
        \int_0^1|y||f_r(y)| |\nabla u(x-ty)|\,dt\,dy\\
        &= \frac{1}{r}
        \int_{\Real^n}\int_0^1 |f_r(w/t)|t^{-n-1}|w| |\nabla u(x-w)|\,dt\,dw.
    \label{fund-calc-leq}
    \end{split}
\end{align}
This is a convolution of $|\nabla u|$ against the function $g(r,w)$ defined
by
\begin{equation*}
    g(r,w) = r^{-1}|w| \int_0^1 t^{-n-1} |f_r(w/t)| \,dt.
\end{equation*}
The condition $|x|f\in L^1$ implies that $g\in L^1$.  Moreover we have the
scaling relationship that $g(r,w) = r^{-n} g(1,w/r)$.  Now Young's inequality
yields~\eqref{BV-conv-leq}.  To complete the proof for a general function
$u\in BV(\Real^n)\cap L^\infty(\Real^n)$, form a sequence of smooth
approximations $u_k$ such that $u_k\to u$ in $L^1$ and
$\|u_k\|_{BV} \to \|u\|_{BV}$.

We now prove the pointwise bound~\eqref{local-conv-leq}.
Let $\eps>0$, and choose $C(\eps)$ so large that
\begin{equation*}
    \int_{|x|>Cr} |f_r(x)|\,dx = \int_{|x|>C} |f(x)|\,dx < \eps / 2.
\end{equation*}

Using the cancellation condition again, and splitting the integral into parts,
\begin{align*}
    |f_r\ast u(x)| &\leq \int |u(x-y)-u(x)||f_r(y)|\,dy\\
    &\leq \int_{|y|<Cr} |u(x-y)-u(x)||f_r(y)|\,dy + \eps \|u\|_{L^\infty}.
\end{align*}
Define $\tilde{f}_r(y) = f_r(y) \One_{\{|y|<Cr\}}$, and notice that the
first term can be written as
\begin{equation*}
    \int_{\Real^n} |u(x-y)-u(x)||\tilde{f}_r(y)|\,dy.
\end{equation*}
Now applying again the fundamental theorem of calculus and a change of
variables as in~\eqref{fund-calc-leq}, we obtain
\begin{equation}
    \frac{1}{r} \int_{\Real^n} |u(x-y)-u(x)||\tilde{f}_r(y)|\,dy
    \leq \int_{\Real^n} \tilde{g}(r,w) |\nabla u|(x-w)\,dw
    \label{tildef-leq}
\end{equation}
where
\begin{equation*}
    \tilde{g}(r,w) = r^{-1}|w| \int_0^1 t^{-n-1} |\tilde{f}_r(w/t)| \,dt.
\end{equation*}
By scaling, $\tilde{g}(r,w) = r^{-n} g(1,w/r)$,
so $\|\tilde{g}(r,w)\|_{L^\infty} \leq C r^{-n}$.  Moreover
$\tilde{g}(r,w)$ has support in $|w|<Cr$.  Multiplying both sides
of~\eqref{tildef-leq} by $r$, we therefore obtain
\begin{align*}
    \int_{\Real^n} \tilde{g}(r,w) |\nabla u|(x-w) \,dw
    &= \int_{|x-w|<Cr} \tilde{g}(r,w) |\nabla u|(x-w)\,dw
    \\&\leq C r^{1-n} |Du|(B_{Cr}(x)).
\end{align*}

For the local $L^1$ bound~\eqref{local-l1-bd} we write
\[
    \frac{1}{r}|f_r\ast u(x)|
    \leq \int_{\Real^n} \tilde{g}(r,w)|\nabla u|(x-w)|\,dw
    + \int_{\Real^n} (g-\tilde{g})(r,w)|\nabla u|(x-w)|\,dw.
\]
Choosing $C(\eps)$ larger if necessary, we may assume that
$\|g-\tilde{g}\|_{L^1} < \eps$.  Now applying Young's inequality and
to both pieces and keeping track of the support yields~\eqref{local-l1-bd}.
Finally the local $L^\infty$ bound follows simply from the same type of
argument.  Writing
\[
    |f_r\ast u(x)| \leq |\tilde{f}_r\ast u| + |(f-\tilde{f})\ast u|.
\]
and applying Young's inequality to both sides completes the proof.
\end{proof}

We conclude the section with a few basic properties of the limit
of $\frac{1}{r}\|f_r\ast u\|_{L^2}^2$ that will let us perform various
decompositions.  The first property is a kind of locality.
\begin{proposition}
    For any closed set $K\subset\Real^n$,
    \[
        \limsup_{r\to 0} \frac{1}{r}\|f_r\ast u\|_{L^2(K)}^2
        \leq C \|u\|_{BV(K)} \|u\|_{L^\infty(K)}
    \]
    \label{local-limit-lem}
\end{proposition}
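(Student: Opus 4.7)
The plan is to interpolate between the local $L^1$ and $L^\infty$ bounds already established in Lemma~\ref{basic-ests}. Starting from the elementary inequality $\int_K |g|^2 \leq \|g\|_{L^\infty(K)} \|g\|_{L^1(K)}$ applied to $g = f_r\ast u$ and dividing by $r$,
\[
    \frac{1}{r}\|f_r\ast u\|_{L^2(K)}^2 \leq \|f_r\ast u\|_{L^\infty(K)} \cdot \frac{1}{r}\|f_r\ast u\|_{L^1(K)}.
\]
The task then reduces to bounding the two factors on the right by the $L^\infty$ and $BV$ norms of $u$ localized near $K$.

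Fixing $\eps>0$, I would apply the local bounds~\eqref{local-linf-bd} and~\eqref{local-l1-bd} with $A = K$ to obtain
\[
    \frac{1}{r}\|f_r\ast u\|_{L^2(K)}^2 \leq \bigl(C\|u\|_{L^\infty(K+B_{C(\eps)r})} + \eps\|u\|_{L^\infty}\bigr)\bigl(C|Du|(K+B_{C(\eps)r}) + \eps\|u\|_{BV}\bigr).
\]
Since $K$ is closed, the nested open neighborhoods $K+B_{C(\eps)r}$ shrink to $K$ as $r \to 0$, so outer regularity of the Radon measure $|Du|$ gives $|Du|(K+B_{C(\eps)r}) \downarrow |Du|(K) = \|u\|_{BV(K)}$, and an analogous monotone limit handles the $L^\infty$ factor. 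Taking $\limsup_{r\to 0}$ and then letting $\eps \to 0$ yields the proposition, up to adjusting the universal constant.

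The only mild subtlety — what I would flag as the ``obstacle,'' though it is not a serious one — is the behavior of the $L^\infty$ factor in the limit: $\|u\|_{L^\infty(K+B_r)}$ need not literally converge to $\|u\|_{L^\infty(K)}$ when a jump set of $u$ meets $\partial K$. This is harmless for the stated inequality: one can either interpret $\|u\|_{L^\infty(K)}$ as the decreasing limit of essential suprema on open neighborhoods of $K$ (the natural convention for BV functions) or simply bound the factor by the global $\|u\|_{L^\infty}$ and absorb the excess into $C$. Everything else follows mechanically from Lemma~\ref{basic-ests}.
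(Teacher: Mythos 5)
Your proposal is correct and follows essentially the same route as the paper: bound $\frac{1}{r}\|f_r\ast u\|_{L^2(K)}^2$ by the product of the local $L^\infty$ and $L^1$ bounds \eqref{local-linf-bd} and \eqref{local-l1-bd} from Lemma~\ref{basic-ests}, then use that the closed set $K$ is the intersection of its shrinking neighborhoods to pass to the limit (the paper fixes $K+B_\eps$, takes $\limsup_{r\to 0}$, and then sends $\eps\to 0$ by monotone convergence, which is the same mechanism as your outer-regularity step). Your remark about the $L^\infty$ factor is a fair reading of a point the paper's own proof also glosses over, and your fallback of bounding it by the global $\|u\|_{L^\infty}$ is consistent with the stated inequality.
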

\begin{proof}
    Applying~\eqref{local-l1-bd} and~\eqref{local-linf-bd} we certainly have
    for each $\eps > 0$ that
    \[
        \limsup_{r\to 0} \frac{1}{r}\|f_r\ast u\|_{L^2(K)}^2
        \leq \|u\|_{BV(K+B_\eps)}\|u\|_{L^\infty(K+B_\eps)}
        + \eps \|u\|_{BV}\|u\|_{L^\infty}.
    \]
    By monotone convergence and the fact that $K = \bigcap_{\eps>0}(K+B_\eps)$,
    the result follows from taking $\eps\to 0$.
\end{proof}

The second result is that the functional is a continuity to perturbations.
\begin{proposition}
    Let $u\in BV\cap L^\infty(\Real^n)$.  For every $\eps>0$ there exists
    a $\delta>0$ such that if $v\in BV\cap L^\infty(\Real^n)$ satisfies
    \[
        \limsup_{r\to 0} \frac{1}{r}\|f_r\ast v\|_{L^2}^2 < \delta,
    \]
    then
    \[
        \limsup_{r\to 0}\frac{1}{r} \left|\|f_r\ast (u+v)\|_{L^2}^2
        - \|f_r\ast u\|_{L^2}^2\right| < \eps.
    \]
    \label{continuity-lem}
\end{proposition}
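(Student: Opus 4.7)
The plan is to exploit the quadratic structure of the functional via a one-line Cauchy--Schwarz. First I expand the difference algebraically:
\[
    \|f_r\ast(u+v)\|_{L^2}^2 - \|f_r\ast u\|_{L^2}^2 = 2\langle f_r\ast u,\, f_r\ast v\rangle_{L^2} + \|f_r\ast v\|_{L^2}^2.
\]
Dividing by $r$ and bounding the inner product by Cauchy--Schwarz yields, for every $r>0$,
\[
    \tfrac{1}{r}\bigl|\|f_r\ast(u+v)\|_{L^2}^2 - \|f_r\ast u\|_{L^2}^2\bigr| \leq 2\sqrt{\alpha_r\beta_r} + \beta_r,
\]
where $\alpha_r := \tfrac{1}{r}\|f_r\ast u\|_{L^2}^2$ and $\beta_r := \tfrac{1}{r}\|f_r\ast v\|_{L^2}^2$.

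The only analytic input needed is a uniform-in-$r$ bound on $\alpha_r$. This follows directly from Lemma~\ref{basic-ests} by interpolating between the $L^\infty$ bound~\eqref{Linf-conv-leq} and the $L^1$ bound~\eqref{BV-conv-leq}:
\[
    \|f_r\ast u\|_{L^2}^2 \leq \|f_r\ast u\|_{L^\infty}\|f_r\ast u\|_{L^1} \leq C^2 r\, \|u\|_{L^\infty}\|u\|_{BV}.
\]
Hence $\alpha_r \leq A$ for all $r>0$, with $A := C^2\|u\|_{L^\infty}\|u\|_{BV}<\infty$.

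Taking $\limsup_{r\to 0}$ of the pointwise inequality and using that $x\mapsto 2\sqrt{Ax}+x$ is continuous and nondecreasing on $[0,\infty)$, I would conclude
\[
    \limsup_{r\to 0}\tfrac{1}{r}\bigl|\|f_r\ast(u+v)\|_{L^2}^2 - \|f_r\ast u\|_{L^2}^2\bigr| \leq 2\sqrt{AB} + B,
\]
where $B := \limsup_{r\to 0}\beta_r$. Given $\eps>0$, it then suffices to choose $\delta>0$ small enough that $2\sqrt{A\delta}+\delta < \eps$: the hypothesis $B<\delta$ delivers the conclusion. The argument is essentially a single application of Cauchy--Schwarz, and I do not foresee any genuine obstacle; the only slightly delicate point is the uniform boundedness of $\alpha_r$, which is handled immediately by Lemma~\ref{basic-ests}.
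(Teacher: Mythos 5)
Your proof is correct and follows essentially the same route as the paper: expand the square, apply Cauchy--Schwarz to the cross term, and control $\tfrac{1}{r}\|f_r\ast u\|_{L^2}^2$ uniformly via the $L^\infty$ and $BV$ bounds of Lemma~\ref{basic-ests}. Your version is slightly more explicit about the interpolation $\|g\|_{L^2}^2\leq\|g\|_{L^\infty}\|g\|_{L^1}$ and the quantitative choice of $\delta$, but there is no substantive difference.
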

\begin{proof}
    Using linearity of convolution and expanding the square, one has
    \begin{align*}
        \frac{1}{r}\left| \|f_r\ast (u+v)\|_{L^2}^2
        - \|f_r \ast u\|_{L^2}^2\right|
        &\leq \frac{1}{r}\|f_r\ast v\|_{L^2}^2 +
        \frac{1}{r} 2\left|\langle f_r\ast u, f_r\ast v\rangle\right|.
    \end{align*}
    The first term is bounded by $2\delta$ for sufficiently small $r$.  We may
    apply Cauchy-Schwartz to the second term to obtain the bound
    \[
        \frac{1}{r}|\langle f_r\ast u, f_r\ast v\rangle|
        \leq \left(\frac{1}{r^{1/2}}\|f_r\ast u\|_{L^2}\right)
        \left(\frac{1}{r^{1/2}}\|f_r\ast v\|_{L^2}\right).
    \]
    By the global $L^1$ and $BV$ estimates of Lemma~\ref{basic-ests} the first
    term remains bounded, while the second term is eventually bounded by
    $2\sqrt{\delta}$ by hypothesis.  Thus one can choose $\delta$ sufficiently
    small to conclude.
\end{proof}

\section{An example with infinite perimeter.}
\label{construction-section}

In this section we show that
$\liminf_{\eps\to 0}\frac{1}{|\log\eps|}\|\gamma_\eps\ast \One_E\|_{H^{1/2}}^2$
does not characterize sets of finite perimeter.  Indeed we show that one
can find a set with infinite perimeter for which the $\liminf$ is zero.
\begin{theorem}
    For any $n>0$ there exists $E\subset \Real^n$ with $P(E)=\infty$ and such
    that
    \[
        \liminf_{\eps\to 0}
        \frac{1}{|\log\eps|}\|\gamma_\eps\ast \One_E\|_{H^{1/2}}^2 = 0.
    \]
    \label{example-thm}
\end{theorem}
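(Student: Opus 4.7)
The plan is to build $E$ as a disjoint union $E = \bigsqcup_j E_j$ of ``clumps'' $E_j$ contained in pairwise disjoint regions $B_j \subset [0,1]^n$ of diameter $d_j \to 0$, where each clump consists of features at a characteristic scale $r_j \ll d_j$ (e.g., a union of small balls of radius $r_j$). The three sequences $(r_j, d_j, p_j = P(E_j))$ are to be chosen so that $\sum_j d_j^n < 1$ (fitting inside the unit cube), the scales $r_j, d_j$ are super-lacunary (e.g., iterated exponentials), and $\sum_j p_j = \infty$, which gives $P(E) = \infty$.

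By the disjointness of the $B_j$ and the rapid decay of $\varphi$ from Lemma~\ref{basic-ests}, the $H^{1/2}$ norm should be essentially additive across clumps,
\[
    \|\gamma_\eps \ast \One_E\|_{H^{1/2}}^2 \approx \sum_j \|\gamma_\eps \ast \One_{E_j}\|_{H^{1/2}}^2,
\]
and for each summand the estimates of Lemma~\ref{basic-ests} give two regimes. When $\eps \ll r_j$ the clump is \emph{resolved} and the standard BV-type bound gives contribution $\lesssim p_j \log(d_j/\eps)$; when $\eps \gg d_j$ the clump is \emph{invisible}, so it acts like a point mass of weight $|E_j|$ and contributes only $\lesssim |E_j|^2/\eps^{n+1}$. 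I would then choose the sampling scales $\eps_k = \sqrt{r_k r_{k+1}}$ lying between two consecutive feature scales, so that the clumps with $j > k$ are invisible while those with $j \leq k$ are resolved.

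With this split, the contribution from $j > k$ is easily made $o(|\log \eps_k|)$ by letting $|E_j|$ shrink rapidly enough. The essential difficulty is to bound the resolved sum
\[
    \sum_{j \leq k} p_j \log(d_j/\eps_k) \;=\; |\log \eps_k|\sum_{j \leq k} p_j \;-\; \sum_{j \leq k} p_j |\log d_j|
\]
by $o(|\log \eps_k|)$, which the naive triangle inequality fails to do since $\sum p_j = \infty$. Overcoming this is the main obstacle: one must arrange the growth of $p_j$ and the lacunarity of $d_j$ so that the subtraction genuinely cancels to leading order. Concretely, the construction should be weighted so that the cumulative perimeter $\sum_{j \leq k} p_j$ concentrates at indices $j$ for which the ratio $|\log d_j|/|\log \eps_k|$ is close to $1$, forcing the two terms to nearly coincide. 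Carrying out this delicate balance between the diverging perimeter sequence and the super-lacunary scale sequence, and checking it against the remainder terms provided by Lemma~\ref{basic-ests}, is where the heart of the argument lies.
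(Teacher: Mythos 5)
The gap is precisely the step you defer to the end, and it cannot be repaired within your framework. The diameters $d_j$ of the early clumps are fixed numbers while $|\log\eps_k|\to\infty$, so for each fixed $j$ one has $\log(d_j/\eps_k)=|\log\eps_k|-|\log d_j|\sim|\log\eps_k|$; the ratio $|\log d_j|/|\log\eps_k|$ tends to $0$, not to $1$ as your proposed weighting would require. Consequently, for every fixed $j_0$,
\[
    \liminf_{k\to\infty}\ \frac{1}{|\log\eps_k|}\sum_{j\le k}p_j\log(d_j/\eps_k)
    \ \ge\ \sum_{j\le j_0}p_j,
\]
and since $\sum_j p_j=\infty$ the resolved part of your estimate is never $o(|\log\eps_k|)$; it is in fact $\gg|\log\eps_k|$. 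Nor is this merely a weakness of the upper bound you chose: once $\eps\ll r_j$, a clump made of separated balls of radius $r_j$ contributes $\simeq p_j$ to $\frac1r\|\varphi_r\ast\One_E\|_{L^2}^2$ at \emph{every} dyadic scale $r\in[\eps,r_j]$, and because the clumps are disjoint and far apart relative to such $r$ (cross terms are controlled by the tail decay of $\varphi$) these contributions add. Hence for your set $\frac{1}{|\log\eps|}\|\gamma_\eps\ast\One_E\|_{H^{1/2}}^2\gtrsim\sum_{j:\,r_j\gg\eps}p_j\to\infty$ along every sequence $\eps\to 0$: the construction gives $\liminf=+\infty$ rather than $0$. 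The same objection defeats any scheme in which the divergent perimeter sits in measure-separated pieces whose feature scales are eventually resolved.

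What the theorem needs is the opposite mechanism: the perimeter must always remain hidden \emph{below} the sampling scale, and it must be hidden by cancellation rather than by smallness of measure. The paper does this in one dimension with a ``compatible sequence'' of smooth profiles $\phi_k$ taking values in $[0,1]$: the next profile $\phi_{k+1}$ oscillates between $0$ and $1$ at scales far finer than $\eps_k$, but its local averages on intervals of length $\ll\eps_k$ match those of $\phi_k$, so $\|\gamma_r\ast(\phi_{k+1}-\phi_k)\|_{L^2}$ is tiny for all $r\ge\eps_k$ and hence $\gamma_{\eps_k}\ast\One_E\approx\gamma_{\eps_k}\ast\phi_k$. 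Smallness of $\frac{1}{|\log\eps_k|}\|\gamma_{\eps_k}\ast\phi_k\|_{H^{1/2}}^2$ then costs nothing, because $\phi_k$ is a fixed smooth function and $\eps_k$ is chosen only after $\phi_k$ is built; the telescoping errors are summed using an $L^2$--$H^1$ interpolation. Thus at the good scales the set is indistinguishable from a smooth profile with values strictly between $0$ and $1$, and all of the infinite perimeter lives in oscillations finer than $\eps_k$. Higher dimensions are then obtained by taking the Cartesian product $E^n$ and using that the Gaussian factorizes, rather than by constructing clumps directly in $\Real^n$.
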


This will follow from the result in one dimension.
\begin{lemma}
    There exists a set $E\subset[0,1]$ with $0<|E|<1$ and
    \[
        \liminf_{\eps\to 0}
        \frac{1}{|\log\eps|}\|\gamma_\eps\ast \One_E\|_{H^{1/2}}^2 = 0.
    \]
    \label{1d-example-lem}
\end{lemma}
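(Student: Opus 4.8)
The plan is to build $E\subset[0,1]$ as a nested construction, defined at a rapidly increasing sequence of dyadic scales, so that at a carefully chosen subsequence $\eps_k\to 0$ the convolution $\gamma_{\eps_k}\ast\One_E$ is nearly constant (hence contributes almost nothing to the $H^{1/2}$ norm), even though $E$ is genuinely rough at all sufficiently small scales so that $P(E)=\infty$. Concretely, I would fix a rapidly lacunary sequence of integers $N_1\ll N_2\ll\cdots$ (say $N_{k+1}$ much larger than any fixed power of $N_k$) and at stage $k$ subdivide each surviving interval of length $2^{-N_{k-1}}$ into intervals of length $2^{-N_k}$, keeping exactly half of them (alternating, or chosen to balance measure) so that $|E\cap I|=\tfrac12|I|$ for every stage-$(k-1)$ interval $I$. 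Then $|E|=1/2$, and $E$ has a $\tfrac12$-density in every dyadic interval down to the current scale, which forces $P(E)=\infty$ because the essential boundary meets every such interval: a standard density/blow-up argument shows a set of finite perimeter in $\Real$ is, up to null sets, a locally finite union of intervals, which $E$ is not.

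For the upper bound on the $\liminf$, I would use the decomposition from Section~\ref{basic-ests-sec}, namely
\[
    \|\gamma_\eps\ast\One_E\|_{H^{1/2}}^2
    = \sum_{j=1}^{|\log\eps|}\eps^{-1}2^{-j}\|\varphi_{\eps 2^j}\ast\One_E\|_{L^2}^2
    + O(1),
\]
and choose the test scale $\eps=\eps_k$ to sit in the \emph{gap} between scale $N_k$ and scale $N_{k+1}$ — specifically $\eps_k=2^{-M_k}$ with $N_k\ll M_k\ll N_{k+1}$, and moreover $M_k/\log(1/\eps_k)\to 0$ is automatic but I also want $M_k$ large compared to $N_k$ and $\log(1/\eps_k)=M_k$ small compared to $N_{k+1}$. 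The point is a scale dichotomy for $\varphi_{\eps 2^j}\ast\One_E$: (i) at scales $\eps 2^j$ \emph{coarser} than $2^{-N_k}$ (i.e.\ $j$ with $\eps 2^j\gg 2^{-N_k}$), the function $\One_E$ has been averaged below its finest active scale and, because each stage exactly halves the measure in every parent interval, $\gamma_\delta\ast\One_E$ is uniformly close to the constant $\tfrac12$ with an error that decays like a power of $2^{N_k}\delta$; Lemma~\ref{basic-ests} (applied to $\varphi=f$, using $\int\varphi=0$) then gives $\|\varphi_\delta\ast\One_E\|_{L^\infty}\lesssim 2^{N_k}\delta$ on these scales, so $\delta^{-1}\|\varphi_\delta\ast\One_E\|_{L^2}^2\lesssim 2^{N_k}$, and summing the geometric series in $j$ over this range contributes $O(2^{N_k}\cdot|\log\eps_k|)$; dividing by $|\log\eps_k|=M_k$, this is $O(2^{N_k})$ — \emph{not} yet small, so I must instead be cleverer; and (ii) at scales $\eps 2^j$ \emph{finer} than $2^{-N_k}$ but coarser than $2^{-N_{k+1}}$, $\One_E$ is locally exactly an alternating sequence of intervals of length $2^{-N_k}$ inside each parent, so $\varphi_\delta\ast\One_E$ is genuinely oscillating and $\delta^{-1}\|\varphi_\delta\ast\One_E\|_{L^2}^2\asymp 1$, contributing $O(1)$ per scale and hence $O(N_{k+1})$ before dividing — which is \emph{too big}. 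This shows the naive choice fails and pins down the real mechanism.

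The fix — and the genuinely delicate point — is that one should not keep $E$ rough \emph{below} scale $2^{-N_k}$ at the moment we probe with $\eps_k$; rather $\eps_k$ should be placed so that $1/\eps_k$ is much larger than $2^{N_k}$ but $E$ has been constructed only \emph{down to} scale $2^{-N_k}$ at that moment, with the \emph{next} refinement at scale $2^{-N_{k+1}}$ only \emph{appearing} for the test scale $\eps_{k+1}$. That is, $E$ is a single fixed set, but the sequence $\eps_k$ interleaves with the construction: between probing at $\eps_k$ and at $\eps_{k+1}$, many new fine scales of $E$ are switched on, but when we probe at $\eps_k$, all scales of $E$ finer than some $2^{-N_k}$ with $N_k=o(\log(1/\eps_k))$ are \emph{already present and very fine}, so on \emph{every} dyadic band $j$ with $\eps_k 2^j\le 1$ we are in the coarse regime $\eps_k 2^j\gg 2^{-N_k}$, giving $\|\varphi_{\eps_k 2^j}\ast\One_E\|_{L^\infty}\lesssim (2^{N_k}\eps_k 2^j)^{-c}$ for some $c>0$ from the rapid decay and smoothness of $\varphi$ combined with the exact-halving cancellation — wait, that bound is small only when $\eps_k 2^j \gg 2^{-N_k}$, which is every $j$, but the power is $(2^{N_k}\eps_k 2^j)^{-c}$; summing $\eps_k^{-1}2^{-j}$ times the square of this over $1\le j\le M_k$ gives $\lesssim 2^{-cN_k}\sum_j 2^{(1+2c)j}\eps_k^{-1}\cdots$, dominated by the top $j\approx M_k$, i.e.\ $\lesssim 2^{-cN_k}(\eps_k 2^{M_k})^{1+2c}\eps_k^{-1}=2^{-cN_k}$ since $\eps_k 2^{M_k}=1$ — hmm, that is $2^{-cN_k}\cdot\eps_k^{-1}\cdot$, let me not grind this: the upshot I expect is that with $N_k\to\infty$ but $N_k/\log(1/\eps_k)\to 0$ suitably, \emph{and} $\One_E$ genuinely rough only at scales finer than $2^{-N_k}$ where cancellation kicks in, every band contributes $O(2^{-cN_k})$ after normalization and the total is $O(2^{-cN_k})\to 0$. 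The main obstacle is therefore the bookkeeping that reconciles ``$E$ must be rough at all small scales to have infinite perimeter'' with ``at the probe scales $\eps_k$ the relevant roughness scale $2^{-N_k}$ must be both $\to 0$ and $\gg \eps_k^{\theta}$ for the decay to beat the $\eps_k^{-1}$''; I expect this forces a doubly-lacunary choice ($N_k$ grows, $M_k=\log_2(1/\eps_k)$ grows much faster, and the \emph{next} roughness scale $N_{k+1}$ grows faster still, activated only after $\eps_k$) together with a quantitative decay estimate $\|\varphi_\delta\ast\One_E\|_{L^\infty}\lesssim (\delta 2^{N})^{-1}$ coming from writing $\One_E$ on the relevant window as a $1$-periodic (at scale $2^{-N}$) mean-zero-about-$\tfrac12$ function and using $\widehat\varphi(\xi)=O(|\xi|^{3/2})$ near $0$.
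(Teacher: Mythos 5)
Your construction fails at two levels, one elementary and one structural. Elementary: if at every stage you subdivide each surviving interval and keep exactly half of the subintervals, then $|E_k|=2^{-k}$, so the limit set is null and the requirement $0<|E|<1$ (which is what forces $P(E)=\infty$ via the rest of the paper, and what the Cartesian-product step in Theorem~\ref{example-thm} needs) is violated; your assertion ``$|E|=1/2$'' confuses halving relative to the parent with halving once. One can patch this in a fat-Cantor fashion, keeping a fraction $\alpha_k\to 1$ with $\prod_k\alpha_k>0$, but then the deeper problem surfaces.

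Structural: your probe scale is placed on the wrong side of the roughness. You take $2^{-N_k}\gg\eps_k\gg 2^{-N_{k+1}}$, so at probe time the profile of $E$ visible at scales $\ge\eps_k$ is (up to a small correction from the not-yet-activated fine scales) the honest indicator of the stage-$k$ set $E_k$, a finite union of intervals. Since $E_k\to E$ in $L^1$ and $P(E)=\infty$, lower semicontinuity forces $P(E_k)\to\infty$. But an indicator of a set of perimeter $P_k$ contributes $\simeq c\,P_k$ to $\delta^{-1}\|\varphi_\delta\ast\One_E\|_{L^2}^2$ for \emph{every} dyadic band $\delta$ below its finest structural scale $s_k$: the decay $\|\varphi_\delta\ast\One_E\|_{L^\infty}\lesssim(2^{N_k}\delta)^{-c}$ you invoke for $\delta\gg 2^{-N_k}$ holds only away from the stage-$\le k$ boundaries, and the neighborhoods of those $\sim 2^{N_k}$ jump points, where $|\varphi_\delta\ast\One_E|\simeq 1$ on a set of measure $\simeq\delta$ each, are exactly the terms your sketch drops. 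Consequently no choice of probe scales can work for a construction whose partial approximations are indicators: if $|\log\eps_k|\gg\log(1/s_k)$ then most of the $|\log\eps_k|$ bands see $\simeq P_k$ and the normalized quantity is $\gtrsim P_k\to\infty$; if instead $\eps_k$ sits just barely below $s_k$, the bands between consecutive structural scales contribute at least $\sum_{j\le k}c\,P_{j-1}\log(s_{j-1}/s_j)\ge 2c\bigl(\log(1/s_k)-O(1)\bigr)$ once $P_j\ge 2$, which is comparable to, not $o$ of, $|\log\eps_k|$. This is precisely why the paper's proof does \emph{not} keep the intermediate objects as sets: the stage-$k$ profile is a smooth $[0,1]$-valued function $\phi_k$ with finite $H^{1/2}$ norm (its energy lives in finitely many bands, so it is killed by the factor $1/|\log\eps_k|$ once $\eps_k$ is chosen \emph{after} $\phi_k$), and the next stage oscillates at scales far below $\eps_k$ while preserving the local averages of $\phi_k$ (the compatibility condition~\eqref{compatibility-bd}), so the genuine set structure only ever exists at scales finer than every probe scale used so far --- the opposite of your placement of $\eps_k$ below the current roughness scale $2^{-N_k}$.
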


We show now that Theorem~\ref{example-thm} follows from the one-dimensional
case.
\begin{proof}[Proof of Theorem~\ref{example-thm} using Lemma~\ref{1d-example-lem}]
    Choose $E\subset[0,1]$ according to Lemma~\ref{1d-example-lem}.  For
    $n>1$, consider the Cartesian product $E^n\subset\Real^n$.  That is,
    the indicator function can be written
    \[
        \One_{E^n}(x_1,\ldots,x_n) = \One_E(x_1)\One_E(x_2)\cdots\One_E(x_n).
    \]
    We can use the fact that the Gaussian separates to estimate
    $\gamma_\eps\ast\One_{E^n}$:
    \begin{align*}
        \|\gamma_{\eps}\ast \One_{E^n}\|_{H^{1/2}}^2
        &= \int\cdots\int \left(\sum_j\xi_j^2\right)^{1/2} \,\,\prod_{i=1}^n
        \left|\widehat{\gamma_\eps}(\xi_i)\widehat{\One_E}(\xi_i)\right|^2
        \,d\xi_i.
        \\&\leq
        \sum_{j=1}^n\int\cdots\int |\xi_j| \,\,\prod_{i=1}^n
        \left|\widehat{\gamma_\eps}(\xi_i)\widehat{\One_E}(\xi_i)\right|^2
        \,d\xi_i.
        \\&= n \|\gamma_\eps\ast \One_E\|_{H^{1/2}}^2
        \|\gamma_\eps\ast \One_E\|_{L^2}^{2(n-1)}\\
        &\leq n\|\gamma_\eps\ast\One_E\|_{H^{1/2}}^2.
    \end{align*}
    In the last step we used the fact that $|E|<1$.
\end{proof}
The rest of the section will be devoted to proving Lemma~\ref{1d-example-lem}.

\subsection{Plan for the construction}
The idea of the construction is to design a sequence of smooth functions
$\phi_k$ that act as the smoothed versions of the set $E$ at varying scales.
That is, we would like
\[
\gamma_\delta \ast \One_E \approx \gamma_\delta \ast \phi_k
\]
for any $\delta \geq \delta_k$, where $\delta_k$ is a sequence of scales
converging to zero.  If the scales $\delta_k$ are sufficiently small, then
because $\phi_k$ are smooth we should have
\[
    \frac{1}{|\log\eps|} \|\gamma_{\delta_k}\ast \One_E\|_{H^{1/2}}^2
    \approx
    \frac{1}{|\log\eps|} \|\gamma_{\delta_k}\ast \phi_k\|_{H^{1/2}}^2
    \approx 0.
\]
To ensure that the smooth functions $\phi_k$ converge to a measurable set, we
enforce that $0\leq \phi_k\leq 1$ and that the sets $\{\phi_k = 0\}$ and
$\{\phi_k = 1\}$ are strictly increasing.  Moreover the functions
$\phi_k$ face a compatibility condition whereby local averages of $\phi_{k+1}$
must match local averages of $\phi_k$.
The compatibility condition is of the form
\[
    \gamma_{\delta_k} \phi_{k+1} \approx \gamma_{\delta_k} \phi_k.
\]
To reconcile this with our need for the set $\{\phi_{k+1}\in\{0,1\}\}$ to
increase, we construct $\phi_{k+1}$ to be highly oscillatory compared to the
scale $\delta_k$, so that the $\delta_k$ smoothing recovers only the
smoother function $\phi_k$.  A cartoon of the first step of the construction is
given in Figure~\ref{example-cartoon-fig}.

\begin{figure}
    \begin{center}
        \includegraphics{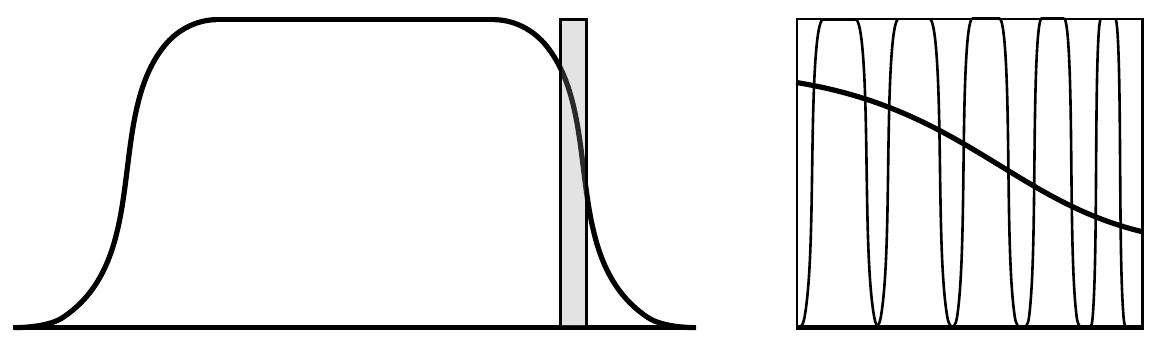}
    \end{center}
    \caption{The beginning of the compatible sequence $\phi_k$.  On the left,
    a smooth function $\phi_1$ is chosen which takes values in $[0,1]$.  On the
right, the function $\phi_2$ is depicted on a magnified portion of the interval
(colored in gray on the left).  In this interval, $\phi_2$ oscillates between $0$
and $1$ in such a way to preserve the local averages of $\phi_1$.}
    \label{example-cartoon-fig}
\end{figure}

The following definition quantifies the conditions outlined above.
\begin{definition}
    Let $\phi_k\in C_c^\infty((0,1))$ be a sequence of smooth functions
    and let $\eps_k>0$ be a decreasing sequence of scales with $\eps_k\to 0$.
    We say that $\phi_k$ is a \emph{compatible sequence} if the following
    properties hold:
    \begin{itemize}
        \item \emph{Nontriviality:} \[0< |\{\phi_1 = 1\}|.\]
        \item \emph{Convergence to a set:}
            \begin{equation}
                \begin{split}
                    |\{\phi_k \not\in \{0,1\}\}| &< (0.99)^k, \\
                    \{\phi_k = 1\} &\subset \{\phi_{k+1} = 1\}, \\
                    \{\phi_k = 0\} &\subset \{\phi_{k+1} = 0\},
                    \qquad\text{and} \\
                    0\leq &\phi_k \leq 1.
                \end{split}
            \end{equation}
        \item \emph{Smoothness to scale $\eps_k$:}
            \begin{equation}
                \frac{1}{|\log\eps_k|}
                \|\gamma_{\eps_k}\ast \phi_k\|_{H^{1/2}}^2 < 2^{-n}.
                \label{smooth-scale-r-bd}
            \end{equation}
        \item \emph{Compatibility across scales:}  One has
            \begin{equation}
                \|\gamma_r\ast (\phi_k-\phi_{k+1})\|_{L^2} < \eps_k^{3}2^{-k}
                \label{compatibility-bd}
            \end{equation}
            for all $r \geq \eps_k$.
    \end{itemize}
\end{definition}

\begin{lemma}
    \label{set-from-sequence}
    Let $\phi_k$ be a compatible sequence with scales $\eps_k$.  Then
    there exists a measurable set $E\subset [0,1]$ with $0<|E|<1$,
    $\phi_k\to E$ in $L^p$ for all $p<\infty$, and
    \[
        \lim_{k\to\infty}
        \frac{1}{|\log\eps_k|}\|\gamma_{\eps_k}\ast \One_E\|_{H^{1/2}}^2 = 0.
    \]
\end{lemma}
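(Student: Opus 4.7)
I would first set $E = \bigcup_{k}\{\phi_k = 1\}$ and verify its basic properties. The nesting conditions $\{\phi_k = 0\}\subset\{\phi_{k+1} = 0\}$ and $\{\phi_k = 1\}\subset\{\phi_{k+1} = 1\}$ together with the summability $\sum_k |\{\phi_k\not\in\{0,1\}\}| < \sum_k (0.99)^k < \infty$ imply, via Borel--Cantelli, that for almost every $x$ the values $\phi_k(x)$ are eventually constant and equal to $\One_E(x)$. Dominated convergence then gives $\phi_k\to\One_E$ in every $L^p$ with $p<\infty$. Nontriviality yields $|E|>0$, and $|E|<1$ follows since $\phi_1\in C_c^\infty((0,1))$ forces $\{\phi_1=0\}\subset E^c$ to have positive measure.

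\textbf{Telescoping.} For the $H^{1/2}$ bound I would use
\[
    \One_E - \phi_k = \sum_{j\geq k}(\phi_{j+1}-\phi_j),
\]
which converges in $L^2$ since $|\phi_{j+1}-\phi_j|\leq\One_{\{\phi_j\not\in\{0,1\}\}}$ has squared $L^2$-norm less than $(0.99)^j$. The triangle inequality gives
\[
    \|\gamma_{\eps_k}\ast\One_E\|_{H^{1/2}}
    \leq \|\gamma_{\eps_k}\ast\phi_k\|_{H^{1/2}}
    + \sum_{j\geq k}\|\gamma_{\eps_k}\ast(\phi_{j+1}-\phi_j)\|_{H^{1/2}}.
\]
The first term, after dividing by $|\log\eps_k|$, is handled by the smoothness hypothesis. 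For the tail, the inequality $\eps_k\geq\eps_j$ makes $|\widehat{\gamma_{\eps_k}}|\leq|\widehat{\gamma_{\eps_j}}|$ pointwise, so each summand is bounded by $\|\gamma_{\eps_j}\ast(\phi_{j+1}-\phi_j)\|_{H^{1/2}}$, reducing to a single-scale estimate at which the compatibility condition applies.

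\textbf{Core estimate and main obstacle.} The main difficulty is that compatibility controls $\|\gamma_{\eps_j}\ast u\|_{L^2}$, while the $H^{1/2}$ norm carries an $\eps_j^{-1}$ factor coming from the $|\xi|$ weight. From the pointwise bound $|\xi||\widehat{\gamma_{\eps_j}}(\xi)|^2 \leq C\eps_j^{-1}|\widehat{\gamma_{\eps_j/\sqrt 2}}(\xi)|^2$ one has
\[
    \|\gamma_{\eps_j}\ast u\|_{H^{1/2}}^2 \leq C\eps_j^{-1}\|\gamma_{\eps_j/\sqrt 2}\ast u\|_{L^2}^2,
\]
and the scale mismatch with compatibility (which only controls scale $\eps_j$, not $\eps_j/\sqrt 2$) is bridged by Cauchy--Schwarz in Fourier,
\[
    \|\gamma_{\eps_j/\sqrt 2}\ast u\|_{L^2}^2 \leq \|\gamma_{\eps_j}\ast u\|_{L^2}\,\|u\|_{L^2}.
\]
Taking $u=\phi_{j+1}-\phi_j$, applying compatibility ($\leq\eps_j^3 2^{-j}$) to the first factor and the trivial bound ($\leq (0.99)^{j/2}$) to the second gives $\|\gamma_{\eps_j}\ast(\phi_{j+1}-\phi_j)\|_{H^{1/2}}^2 \leq C\eps_j^{2}\rho^j$ for some $\rho<1$. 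The powers of $\eps_j$ then cancel the $\eps_j^{-1}$ blow-up, so the tail $\sum_{j\geq k}\|\gamma_{\eps_k}\ast(\phi_{j+1}-\phi_j)\|_{H^{1/2}}$ is dominated by a geometrically summable quantity in $j$ that vanishes with $k$ independently of the rate at which $\eps_k$ decays. Combined with the smoothness hypothesis, this yields the desired limit.
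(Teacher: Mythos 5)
Your proposal is correct and follows essentially the same route as the paper: the same telescoping decomposition $\One_E = \phi_k + \sum_{j\geq k}(\phi_{j+1}-\phi_j)$, with the first term handled by the smoothness hypothesis and the tail controlled by the compatibility bound, whose $\eps_j^3$ factor absorbs the negative power of $\eps$ coming from the $H^{1/2}$ weight. The only difference is a technical one in how the $L^2$ compatibility bound is upgraded to $H^{1/2}$: you use the Gaussian-specific pointwise Fourier bound $|\xi||\widehat{\gamma_{\eps_j}}(\xi)|^2\leq C\eps_j^{-1}|\widehat{\gamma_{\eps_j/\sqrt 2}}(\xi)|^2$ plus Cauchy--Schwarz against $\|\phi_{j+1}-\phi_j\|_{L^2}\leq (0.99)^{j/2}$, whereas the paper interpolates $\|v\|_{H^{1/2}}\leq\|v\|_{L^2}^{1/2}\|v\|_{H^1}^{1/2}$ with the crude bound $\|\gamma_{\eps_k}\ast(\phi_{m+1}-\phi_m)\|_{H^1}\leq C\eps_k^{-3/2}$; both variants yield a geometrically summable tail independent of the decay rate of $\eps_k$.
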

\begin{proof}
    The existence of the limiting set $E$ follows straightforwardly
    from the first two hypotheses on $\phi_k$.

    To show that $\|\gamma_{\eps_k}\ast \One_E\|_{H^{1/2}}^2$ grow manageably,
    we write $\One_E$ as a telescoping sum and apply the triangle inequality:
    \begin{align*}
        \frac{1}{|\log\eps_k|}\|\gamma_{\eps_k}\ast\One_E\|_{H^{1/2}}^2
        &\leq
        \frac{2}{|\log\eps_k|}\|\gamma_{\eps_k}\ast\phi_k\|_{H^{1/2}}^2
        + \frac{2}{|\log\eps_k|}\left(\sum_{m=k}^\infty
        \|\gamma_{\eps_k}\ast (\phi_{m+1}-\phi_m)\|_{H^{1/2}}\right)^2
    \end{align*}
    The first term we bound using~\eqref{smooth-scale-r-bd}.  For the terms in
    the sum we interpolate between $L^2$ and $H^1$,
    \begin{align*}
        \|\gamma_{\eps_k}\ast (\phi_{m+1}-\phi_m)\|_{H^{1/2}}
        &\leq
        \|\gamma_{\eps_k}\ast (\phi_{m+1}-\phi_m)\|_{L^2}^{1/2}
        \|\gamma_{\eps_k}\ast (\phi_{m+1}-\phi_m)\|_{H^1}^{1/2}
        \\ &\leq
        (\eps_m^3 2^{-m})^{1/2}
        (2\|\gamma_{\eps_k}\|_{H^1})^{1/2}
        \\ & \leq
        C(\eps_m^3 2^{-m})^{1/2} (\eps_k)^{-3/2}
    \end{align*}
    The bound on the $H^1$ norm follows from the fact that
    $\|\phi_{m+1}-\phi_m\|_{L^1} < 2$.
    Since $\eps_m < \eps_k$ for $m>k$, this is bounded by $2^{-m/2}$, and is
    thus clearly summable over all $m\geq k$.
\end{proof}

Thus our task is reduced to showing the existence of a compatible set.
This will be done inductively in the next subsection.

\subsection{Technical constructions}
In this section we prove three short facts that allow us to construct
$\phi_{n+1}$ from $\phi_n$.  The first says that, in order to get the local
approximation $\gamma_r\ast \phi_k \approx \gamma_r\ast \phi_{k+1}$, it
suffices to demonstrate that the averages of $\phi_k$ and $\phi_{k+1}$ are
equal on many short intervals.  Then we show how to actually construct a
function $\phi_{n+1}$ from $\phi_n$ such that the averages on short intervals
are correct, with the constraint that $\phi_{n+1}$ takes values in $\{0,1\}$
more often.  This is done with the help of a short proposition that takes care of the
case of one single interval.
\begin{proposition}
    Let $\phi \in C_c^\infty((0,1))$ with $0\leq\phi\leq 1$,
    $\delta>0$ be a scale, $\eps>0$ be some tolerance, and $k>0$ be an
    integer.
    Then for sufficiently large $N$ we have the following:
    For every $\psi\in C_c^\infty((0,1))$ with $0\leq \psi\leq 1$, if
    \begin{equation}
        \phi(\frac{i}{N})
        = N\cdot\int_{i/N}^{(i+1)/N}\psi(t)\,dt
        \label{interval-avg-eq}
    \end{equation}
    for all but at most $k$ values of $i\in [N]$, then
    \[
        \|\gamma_r \ast (\phi-\psi)\|_{L^\infty} < \eps.
    \]
    for all $r > \delta$.
    \label{local-avg-constraints}
\end{proposition}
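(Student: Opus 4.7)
The plan is to decompose $\gamma_r \ast (\phi - \psi)(x)$ as a sum over the $N$ small intervals $I_i = [i/N, (i+1)/N]$, separating the at-most-$k$ \emph{bad} indices (where the hypothesis~\eqref{interval-avg-eq} may fail) from the \emph{good} ones. Throughout, I use the uniform kernel bounds
\[
    \|\gamma_r\|_{L^\infty} \le C/\delta
    \quad \text{and} \quad
    \|\gamma_r'\|_{L^\infty} \le C/\delta^2,
\]
valid for all $r \ge \delta$ by Gaussian scaling. The bad intervals are then handled by the trivial estimate $|\phi - \psi| \le 1$, contributing at most $k\|\gamma_r\|_{L^\infty}/N \le Ck/(N\delta)$, which vanishes as $N \to \infty$.

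On each good interval $I_i$, the key point is that $\int_{I_i}(\phi - \psi)$ is small: the hypothesis gives $\int_{I_i}\psi(y)\,dy = \phi(i/N)/N$, while a first-order Taylor expansion yields $\int_{I_i}\phi(y)\,dy = \phi(i/N)/N + O(\|\phi'\|_{L^\infty}/N^2)$, so the difference is $O(\|\phi'\|_{L^\infty}/N^2)$. Next, replacing $\gamma_r(x-y)$ by the constant $\gamma_r(x - i/N)$ across $I_i$ produces an error of $O(\|\gamma_r'\|_{L^\infty}/N^2) = O(1/(N^2\delta^2))$ per interval, summing to $O(1/(N\delta^2))$ over all $N$ intervals. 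The leading terms then contribute
\[
    \sum_{\text{good } i} \gamma_r(x-i/N) \int_{I_i}(\phi-\psi)(y)\,dy
    = O\!\left(\frac{\|\phi'\|_{L^\infty}}{N^2}\right)\sum_i |\gamma_r(x-i/N)|
    = O\!\left(\frac{\|\phi'\|_{L^\infty}}{N}\right),
\]
where the last step uses the Riemann-sum comparison $N^{-1}\sum_i |\gamma_r(x - i/N)| \le \|\gamma_r\|_{L^1} + o(1) = 1 + o(1)$.

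Combining the three contributions yields $\|\gamma_r \ast (\phi - \psi)\|_{L^\infty} \le C\bigl(\|\phi'\|_{L^\infty}/N + 1/(N\delta^2) + k/(N\delta)\bigr)$ uniformly in $r \ge \delta$ and $x$, which can be made smaller than $\eps$ by taking $N$ large in terms of $\phi$, $\delta$, $k$, and $\eps$. There is no substantial obstacle: the argument is essentially a first-order quadrature estimate, and the only mildly technical point is guaranteeing that the bounds on $\gamma_r$ and $\gamma_r'$ hold uniformly over $r \ge \delta$, which is immediate from the scaling of the Gaussian.
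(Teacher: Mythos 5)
Your proof is correct, and it takes a genuinely different route from the paper's. The paper introduces an intermediate coarse-grained function $g=\chi_{M/N}\ast\psi$ at a second scale $M/N$: it first shows $\|g-\phi\|_{L^\infty}<\eps/2$ by averaging the hypothesis over blocks of $M$ consecutive intervals (the at most $k$ bad intervals contribute $k/M$) together with the Lipschitz bound $|g'|\le 2NM^{-1}$, and then transfers this to the convolution by replacing $\gamma_\delta$ with $\gamma_\delta\ast\chi_{M/N}$, choosing $M$ large and then $N$ a large multiple of $M$. You bypass the intermediate mollification entirely and argue at the single scale $1/N$: split the support into the $N$ intervals $I_i$, kill the at most $k$ bad ones by the trivial bound $k\|\gamma_r\|_{L^\infty}/N$, and on the good ones combine the exact average condition with a first-order quadrature estimate, freezing the kernel at $i/N$ at cost $\|\gamma_r'\|_{L^\infty}/N^2$ per interval. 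What your approach buys is an explicit rate, $\|\gamma_r\ast(\phi-\psi)\|_{L^\infty}\le C\bigl(\|\phi'\|_{L^\infty}+\delta^{-2}+k\delta^{-1}\bigr)/N$, with $N$ depending only on $\phi$, $\delta$, $k$, $\eps$ (not on $\psi$), and it makes the uniformity over all $r\ge\delta$ completely explicit through the bounds $\|\gamma_r\|_{L^\infty}\le C/\delta$, $\|\gamma_r'\|_{L^\infty}\le C/\delta^2$ — a point the paper's proof leaves implicit, since it is written only for $r=\delta$. The paper's two-scale argument, by contrast, is softer and avoids any quadrature bookkeeping, at the price of the slightly delicate nested choice of $M$ and $N$.
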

\begin{proof}
    Let $\chi$ be the indicator function for the interval $[0,1]$.
    We will consider the function $g=\chi_{M/N}\ast \psi$.  We show that
    by choosing $M$ large enough, and then $N$ to be a sufficiently large
    multiple of $M$, we can enforce $\|g-\phi\|_\infty < \eps/2$.

    Indeed, since~\eqref{interval-avg-eq}
    holds on all but at most $k$ intervals,
    \[
        \left|g\left(\frac{i}{N}\right) - \frac{1}{M}\sum_{j=i}^{i+M-1}
        \phi\left(\frac{j}{N}\right)\right| \leq \frac{k}{M}.
    \]
    Assuming we take $M$ to be sufficiently large, and then
    $N$ to be a sufficiently large multiple of $M$,
    we have $|g(i/N)-\phi(i/N)|<\eps/10$.
    The definition of $g$ also yields the Lipschitz bound
    $|g'(x)|\leq 2NM^{-1}$,
    so that we can conclude $\|g-\phi\|_\infty\leq \eps/2$ as desired,
    provided again $M$ is large enough.

    Finally, we take $N$ large enough that
    \[
    \|\gamma_\delta - \chi_{M/N}\ast \gamma_\delta\|_{L^\infty} < \eps/100
    \]
    and
    \[
    \|\phi - \chi_{M/N}\ast \phi\|_{L^\infty} < \eps/100.
    \]
    Then we conclude since
    \begin{align*}
        \|\gamma_\delta\ast (\phi-\psi)\|_{L^\infty} &\leq
        \|(\gamma_\delta-\gamma_\delta\ast \chi_{M/N})\ast (\phi-\psi)\|_{L^\infty} +
        \|\gamma_\delta \ast (\chi_{M/N}\ast (\phi-\psi))\|_{L^\infty}
        \\&< \eps.
    \end{align*}
\end{proof}

The next proposition lets us satisfy the local average condition on an interval
with a function that looks more like an indicator function.
\begin{proposition}
    Let $a\in [0,1)$ be a target average.  Then there exists a
    function $\psi\in C_c^\infty((0,1))$ satisfying
    $|\{\psi\in\{0,1\}| > 0.1$ and $\int_0^1\psi = a$, and
        such that the sets $\{\psi=0\}$ and $\{\psi=1\}$ are unions
        of finitely many closed intervals.
        \label{interval-avg-construction}
\end{proposition}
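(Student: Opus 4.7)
The plan is to construct $\psi$ as a smoothed trapezoid: zero near both endpoints of $(0,1)$, a smooth transition rising to $1$, a plateau at $1$ whose length is tuned to achieve the prescribed integral, then a symmetric smooth transition falling back to $0$. Keeping the transition intervals short forces $\{\psi=0\}\cup\{\psi=1\}$ to occupy nearly all of $[0,1]$, comfortably beating the threshold $0.1$.

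First I would fix, once and for all, a smooth monotone profile $h\in C^\infty([0,1])$ with $h(0)=0$, $h(1)=1$, $0\le h\le 1$, and all derivatives of $h$ vanishing at both endpoints, via the standard mollified-step construction. Record $c=\int_0^1 h\in(0,1)$. The case $a=0$ is trivial: take $\psi\equiv 0$, so that $\{\psi=0\}=[0,1]$ and $\{\psi=1\}=\emptyset$ are (possibly empty) unions of closed intervals, and the measure condition is met.

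For $a\in(0,1)$ I would choose a transition width $\eta>0$ so small that both $2c\eta<a$ and $2\eta(1-c)<1-a$ hold; these two inequalities are compatible precisely because $a<1$. Set the plateau length $L=a-2c\eta>0$ and the boundary buffer $\delta=(1-L-2\eta)/2>0$, then define $\psi$ piecewise as $0$ on $[0,\delta]\cup[1-\delta,1]$, the rescaled rising profile $h((x-\delta)/\eta)$ on $[\delta,\delta+\eta]$, identically $1$ on the plateau $[\delta+\eta,\delta+\eta+L]$, and the mirrored falling profile $h((1-\delta-x)/\eta)$ on $[\delta+\eta+L,1-\delta]$. The infinite flatness of $h$ at its endpoints makes $\psi$ of class $C^\infty$ across every junction, and the boundary buffer ensures $\psi\in C_c^\infty((0,1))$. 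A direct integration yields $\int\psi = L+2c\eta = a$, while the level sets $\{\psi=0\}=[0,\delta]\cup[1-\delta,1]$ and $\{\psi=1\}=[\delta+\eta,\delta+\eta+L]$ are finite unions of closed intervals with combined measure $1-2\eta$, which can be made as close to $1$ as desired by choosing $\eta$ small (and in particular larger than $0.1$).

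The only non-bookkeeping ingredient is the existence of a $C^\infty$ transition with infinitely flat endpoints, which is classical, so there is no substantive obstacle; all remaining work is verifying that the linear constraints on $\eta$, $\delta$ and $L$ are simultaneously solvable, which reduces to the hypothesis $a<1$.
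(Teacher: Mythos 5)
Your proof is correct and, while it uses the same underlying object (a trapezoid bump function with flat top and bottom), it arrives at the conclusion by a cleaner route than the paper. The paper fixes a one-parameter family $\psi_t$ in which the transition width $t$ controls everything at once, shows by the intermediate value theorem that $I(t)=\int\psi_t$ hits any $a\in(1/2,1)$, and then handles $a\le 1/2$ by rescaling the range of a fixed $\psi_{1/10}$ (which, as a side effect, makes $\{\psi=1\}$ empty). You instead decouple the transition width $\eta$ from the plateau length $L$, which lets you solve $\int\psi=L+2c\eta=a$ \emph{exactly} by setting $L=a-2c\eta$, with no appeal to continuity or case split on $a$. This is more transparent: the only constraints are the two linear inequalities $\eta<a/(2c)$ and $\eta<(1-a)/(2(1-c))$, both trivially satisfiable for small $\eta$, and the measure of $\{\psi\in\{0,1\}\}$ comes out to $1-2\eta$, which beats the threshold $0.1$ with room to spare. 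One thing to keep explicit: the level-set claim $\{\psi=0\}=[0,\delta]\cup[1-\delta,1]$ and $\{\psi=1\}=[\delta+\eta,\delta+\eta+L]$ requires that the profile $h$ satisfy $0<h<1$ on the open interval $(0,1)$; the standard mollified-step construction does give this, but it is worth stating since mere monotonicity with flat endpoints would allow $h$ to be constant on subintervals. With that noted, your argument is complete.
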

\begin{proof}
    We will split into the cases $a>1/2$ and $a<1/2$.  We begin with
    the case $a>1/2$.
    Let $\sigma$ be a smooth increasing function satisfying
    $\sigma(x) = 0$ for $x\leq 1/2$ and $\sigma(x)=1$ for $x\geq1$.
    k

    Consider the following function $\psi_t\in C_c^\infty((0,1))$
    defined for $0<t\leq1/2$:
    \[
        \psi_t(x) = \begin{cases}
            \sigma(x/t), & x < t \\
            1, & x\in [t,1-t] \\
            \sigma((1-x)/t), & x > 1-t\\
        \end{cases}.
    \]
    Each of $\psi_t$ satisfy the condition $|\{\psi\in\{0,1\}\}|>0.1$.
    Moreover $I(t) = \int_0^1 \psi_t$ is a continuous function in $t$
    with $I(1/2) < 1/2$ and $\lim_{t\to 0} I(t) = 1$.  Thus by the
    intermediate value theorem we have that, for any $a\in (1/2,1)$,
    there exists $t$ such that $\int \psi_t = a$.

    Now suppose $a\leq 1/2$.  Observe that $I(0.1) > 1/2$, so
    the function $\psi = \frac{a}{I(0.1)}\psi_{1/10}$ satisfies the
    constraints.
\end{proof}

Finally we use Proposition~\ref{interval-avg-construction} to
construct a function satisfying the local average constraints of
Proposition~\ref{local-avg-constraints}.
\begin{proposition}
    Let $\phi\in C_c^\infty((0,1))$ satisfy $0\leq \phi\leq 1$, and let
    $N > 0$.
    Suppose that the sets $\{\phi=1\}$ and $\{\phi=0\}$ can be written as
    unions of at most $k$ intervals.  Then there exists
    $\psi\in C_c^\infty((0,1))$ satisfying the following constraints:
    \begin{itemize}
        \item $\{\phi=1\} \subset \{\psi=1\}$ and
            $\{\phi=0\} \subset \{\psi=0\}$.
        \item $|\{\psi\not\in\{0,1\}\}|\leq 0.99 \cdot |\{\phi\not\in\{0,1\}\}|$.
        \item The level sets $\{\psi=0\}$ and $\{\psi=1\}$ can each be
            written as a union of finitely many intervals.
        \item The function $\psi$ satisfies the following local average
            constraints
            \[
                \phi\left(\frac{i}{N'}\right)
                = N'\int_{i/N'}^{(i+1)/N'} \psi(t)\,dt
            \]
            for some $N'>N$, and on all but at most $2k$ intervals.
    \end{itemize}
    \label{compatible-construction}
\end{proposition}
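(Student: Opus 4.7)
Partition $(0,1)$ into $N'$ equal sub-intervals $I_i = [i/N', (i+1)/N']$, with $N' > N$ to be chosen sufficiently large. Classify each $I_i$ as \emph{type-one} if $I_i \subset \{\phi = 1\}$, \emph{type-zero} if $I_i \subset \{\phi = 0\}$, \emph{interior transition} if $I_i \subset \{0 < \phi < 1\}$, and \emph{boundary} if $I_i$ contains a point of $\partial\{\phi=1\}\cup\partial\{\phi=0\}$. Since the level sets $\{\phi=1\}$ and $\{\phi=0\}$ are unions of at most $k$ intervals, their combined boundary is a finite set of at most $2k$ points, so there are at most $2k$ boundary intervals, and for $N'$ large each contains exactly one such point and no two are adjacent. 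On type-one intervals set $\psi \equiv 1$ and on type-zero intervals set $\psi \equiv 0$. On an interior-transition interval $I_i$ apply Proposition~\ref{interval-avg-construction} with target average $\phi(i/N') \in (0,1)$ to obtain $\tilde\psi_i \in C_c^\infty((0,1))$ with $\int_0^1 \tilde\psi_i = \phi(i/N')$ and $|\{\tilde\psi_i \in \{0,1\}\}| > 0.1$, and place the rescaled copy $x \mapsto \tilde\psi_i(N'(x - i/N'))$ on $I_i$. On a boundary interval $I_j$ with boundary point $x_0$, set $\psi = \phi \cdot b$ for a smooth bump $b$ that is identically $1$ on $\{\phi \in \{0,1\}\} \cap I_j$ and, at each endpoint of $I_j$, either equals $1$ with all derivatives vanishing (when the adjacent interval is type-one or type-zero) or vanishes to infinite order (when the adjacent interval is interior transition). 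The boundary intervals are exactly the $\le 2k$ intervals where the local average constraint is not imposed.

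The main verification is $C^\infty$ matching across shared endpoints. Continuity of $\phi$ forces a type-one (respectively type-zero) interval to be adjacent only to another type-one (resp.\ type-zero) interval or to a boundary interval, so there are no direct type-one/interior-transition or type-one/type-zero junctions. At a type-one/boundary junction the shared endpoint $y$ satisfies $\phi(y) = 1$ with all higher derivatives vanishing (since $\phi \equiv 1$ on the type-one side and $\phi$ is smooth), and $b \equiv 1$ with vanishing derivatives at $y$ by construction, so $\psi = \phi \cdot b = 1$ matches to all orders. At interior-transition/boundary and interior-transition/interior-transition junctions both sides vanish to infinite order at the shared endpoint (on the interior-transition side from the $C_c^\infty$ support of $\tilde\psi_i$, on the boundary side from the chosen vanishing of $b$). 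Type-zero/boundary junctions are handled in the same way.

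The remaining checks are routine. The subset conditions hold because $\psi \equiv 1$ on type-one intervals and $\psi = \phi \cdot b = 1$ on $\{\phi=1\}\cap I_j$ in each boundary interval (and similarly for $\{\phi=0\}$); the level sets $\{\psi=0\}$ and $\{\psi=1\}$ are finite unions of intervals because each $I_i$ contributes only finitely many constant pieces; and for the measure bound, the interior-transition intervals contribute at most $0.9|\{\phi\notin\{0,1\}\}|$ to $|\{\psi\notin\{0,1\}\}|$, while boundary intervals contribute at most $2k/N'$, which is smaller than $0.09|\{\phi\notin\{0,1\}\}|$ once $N'$ is taken large enough. The principal obstacle is the careful choice of the bump $b$ on each boundary interval so that $\psi$ matches smoothly with both of its neighbors while still achieving $\psi=1$ on $\{\phi=1\}\cap I_j$ and $\psi=0$ on $\{\phi=0\}\cap I_j$; beyond that, the argument is bookkeeping.
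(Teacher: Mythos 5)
Your construction is essentially the paper's: partition $(0,1)$ into $N'$ equal subintervals, set $\psi$ constant on intervals lying inside $\{\phi=1\}$ or $\{\phi=0\}$, use a rescaled copy of Proposition~\ref{interval-avg-construction} to match the average $\phi(i/N')$ on transition intervals, and make an ad hoc smooth choice on the finitely many intervals meeting $\partial\{\phi=0\}\cup\partial\{\phi=1\}$, which are exactly the exceptional intervals; if anything you are more explicit than the paper about the smooth gluing at shared endpoints and the measure bound. The only blemish, shared with the paper's own argument, is the bookkeeping of exceptional intervals (if each level set is a union of up to $k$ intervals the combined boundary can have about $4k$ points rather than $2k$, and $N'$ should be chosen so that no boundary point is a grid point, so that no two boundary intervals are adjacent), but this constant is immaterial to the way the proposition is used in Lemma~\ref{1d-example-lem}.
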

\begin{proof}
    Choose $N'>N$ such that each interval $[i/N',(i+1)/N']$ contains
    at most point in $\partial \{\phi=0\} \cup \partial\{\phi=1\}$.
    Let $I_i$ be the interval $[i/N',(i+1)/N']$.
    Let $A$ be the set of indices such that their intervals contain
    such an endpoint, that is
    \[
        A := \left\{i ; I_i \cap
        \left(\partial \{\phi=0\} \cup \partial\{\phi=1\}\right) \not=
    \emptyset\right\}.
    \]
    We will define functions $F_i\in C^\infty([0,1])$ for
    $0\leq i\leq N$ and set
    \[
        \psi(x) = F_{\lfloor N\cdot x\rfloor}(\mathrm{frac}(N\cdot x))
    \]
    where $\mathrm{frac}(x)$ denotes the fractional part of $x$.
    We split the choice of $F_i$ into three cases.

    \vskip 10pt
    \noindent\emph{Case I:} $i\not\in A$ and $I_i \subset \{\phi=1\}$.
    In this case we simply set $F_i = 1$.

    \vskip 10pt
    \noindent \emph{Case II:} $i\not\in A$ and $\phi(i/N') < 1$.
    Simply use Proposition~\ref{interval-avg-construction} to choose
    $F_i$ such that $\int F_i = \phi(i/N')$.

    \vskip 10pt
    \noindent \emph{Case III:} $i\in A$.
        Choose any $F_i$ subject to the constraints
            $0\leq F_i\leq 1$, $\phi\in C_c^\infty$,
            $\{\phi=1\}\subset \{\psi=1\}$ and
            $\{\phi=0\}\subset \{\psi=0\}$.

    \vskip 12pt
    Our choice of $N'$ guarantees that the above three cases are exhaustive.
    The resulting function $\psi$ satisfies all the conditions of the lemma.
\end{proof}

\subsection{The iterative algorithm}
In this section we combine the main lemmas above to inductively define
a compatible sequence $\phi_n$.
\begin{proof}[Proof of Lemma~\ref{1d-example-lem}]
    Using Lemma~\ref{set-from-sequence}, it suffices to construct a
    compatible sequence.
    We begin with any valid function
    $\phi_1\in C_c^\infty((0,1))$ satisfying the nontriviality
    constraint $|\{\phi=1\}| > 0$ and such that the sets
    $\{\phi_1=1\}$ and $\{\phi_1=0\}$ are finite unions of closed intervals.
    Since $\phi_1$ is smooth, and thus in $H^{1/2}$
    we have
    \[
        \lim_{\eps\to 0^+}
        \frac{1}{|\log\eps|}\|\gamma_\eps\ast \phi_1\|_{H^{1/2}}^2 \to 0,
    \]
    so we may choose $\eps_1$ small enough to satisfy the smoothness
    constraint~\eqref{smooth-scale-r-bd}.

    We now induct on $k$.
    Suppose that the sets $\{\phi_k=1\}$ and $\{\phi_k=0\}$ are
    unions of at most $K$ intervals. Applying
    Proposition~\ref{local-avg-constraints} with $\phi=\phi_k$,
    $\delta = \eps_k$, $\eps=\eps_k^3 2^{-k}$, and $k=K$, we obtain
    a value $N_k$ for which the interval average
    constraints~\eqref{interval-avg-eq} imply the compatibility
    bound~\eqref{compatibility-bd}.  We can then use
    Proposition~\ref{compatible-construction} with $\phi=\phi_k$ and $N_k$
    to construct $\phi_{k+1}$.  The function $\phi_{k+1}$ is smooth,
    so we can find $\eps_{k+1}$ to satisfy~\eqref{smooth-scale-r-bd}.  Moreover,
    we have that the sets $\{\phi_{k+1}=1\}$ and $\{\phi_{k+1}=0\}$ are finite
    unions of closed intervals, so the induction is closed.
\end{proof}

\section{Characterizing Sets of Finite Perimeter}
\label{characterize-sec}

\subsection{The lower bound}
\label{characterization}
In this section we prove the following characterization of sets of finite
perimeter.
\begin{theorem}
    Let $E\subset\Real^n$ be a set with $P(E)=\infty$.  Then
    \[
        \limsup_{\eps\to 0}\frac{1}{|\log\eps|}
        \|\gamma_\eps\ast \One_E\|_{H^{1/2}}^2 = \infty.
    \]
    \label{limsup-thm}
\end{theorem}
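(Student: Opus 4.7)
I would prove the contrapositive: assuming
\[
\limsup_{\eps\to 0}\frac{1}{|\log\eps|}\|\gamma_\eps\ast\One_E\|_{H^{1/2}}^2 =: M < \infty,
\]
I would deduce $P(E) < \infty$ via the BMO-type characterization of finite perimeter from~\cite{ambrosio2016bmo}. The starting point is the dyadic decomposition of Section~\ref{basic-ests-sec}: the hypothesis yields
\[
\sum_{k=1}^{|\log\eps|} r_k^{-1}\|\varphi_{r_k}\ast\One_E\|_{L^2}^2 \leq (M+1)|\log\eps| + O(1),\qquad r_k = \eps 2^k,
\]
for all sufficiently small $\eps$. A Markov-type pigeonhole shows that at most half of the dyadic scales $r_k\in[\eps,1]$ can violate $r_k^{-1}\|\varphi_{r_k}\ast\One_E\|_{L^2}^2 \leq 2(M+1)$, so letting $\eps\to 0$ extracts an infinite sequence of good dyadic scales $r_j\to 0$ along which this single-scale bound holds.

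The next step converts the single-scale $L^2$ bound at each $r_j$ into a bound on the ABBF BMO-type functional at scale $r_j$. Here the $L^2$ structure of the norm is essential: since $\One_E^2 = \One_E$, for any cube $Q$ of side $r$ the mean oscillation
\[
\frac{1}{|Q|}\int_Q |\One_E - a_Q|^2\,dx = a_Q(1-a_Q) \asymp \min(a_Q, 1-a_Q),\qquad a_Q := \frac{|Q\cap E|}{|Q|},
\]
is exactly (up to constants) the summand appearing in the ABBF functional. By comparing the smooth oscillatory kernel $\varphi_r\ast\One_E$ to a smoothed cube-average difference $\chi_r\ast\One_E - \chi_{2r}\ast\One_E$ in $L^2$ (both select essentially the Littlewood--Paley band near frequency $1/r$), and controlling the tails of $\varphi$ using the pointwise estimates of Lemma~\ref{basic-ests}, I expect a bound of the form
\[
r^{n-1}\sum_{Q\in\mathcal{F}_r}\min\!\left(\frac{|Q\cap E|}{|Q|},\frac{|Q\setminus E|}{|Q|}\right) \leq C\, r^{-1}\|\varphi_r\ast\One_E\|_{L^2}^2 + o_{r\to 0}(1)
\]
uniformly over disjoint families $\mathcal{F}_r$ of cubes of side $r$.

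Combining these two steps, the ABBF BMO-type functional stays bounded along the sequence $r_j\to 0$, which by the main result of~\cite{ambrosio2016bmo} forces $P(E) < \infty$, contradicting the hypothesis. The hard part is the middle step: $r^{-1}\|\varphi_r\ast\One_E\|_{L^2}^2$ captures only a single Littlewood--Paley band, while cube variance superficially depends on all frequencies above $1/r$. The idempotence $\One_E^2 = \One_E$ is precisely what allows one to recover the full cube variance (up to errors that vanish as $r\to 0$) from a single frequency band, and preserving this equivalence while respecting disjointness of the cube family and summing the spatial tails of the oscillatory kernel $\varphi$ is the technical core of the argument.
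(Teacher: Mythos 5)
There is a genuine gap at the step you yourself single out as the technical core: the claimed inequality
\[
r^{n-1}\sum_{Q\in\mathcal{F}_r}\min\!\left(\frac{|Q\cap E|}{|Q|},\frac{|Q\setminus E|}{|Q|}\right)
\leq C\, r^{-1}\|\varphi_r\ast\One_E\|_{L^2}^2 + o_{r\to 0}(1)
\]
is false, and idempotence does not rescue it. A single Littlewood--Paley band at scale $r$ cannot see intermediate cube densities produced by oscillation at scales far below $r$: take $E$ locally a union of stripes of period $\rho\ll r$ filling half the volume; then every $r$-cube has density near $1/2$ (so the left side is of order $r^{-1}$ times the covered volume), while $\varphi_r\ast\One_E\approx\varphi_r\ast\tfrac12=0$ up to errors that are small when $\rho/r$ is small, so the right side is tiny. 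The paper's own Section~\ref{construction-section} is essentially a rigorous version of this obstruction: the set of Theorem~\ref{example-thm} has infinite perimeter, hence by Lemma~\ref{intermediate-set-lemma} its intermediate-density cube count at \emph{every} small scale $\delta$ exceeds $K\delta^{1-n}$ for every $K$, and yet along arbitrarily fine scales the individual band energies $r^{-1}\|\varphi_r\ast\One_E\|_{L^2}^2$ are $o(1)$; your inequality would then give $K\lesssim o(1)$, a contradiction. So extracting single good scales by pigeonhole is not enough information.

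What the cube functional actually compares to is $\|\psi_r\ast\One_E-\One_E\|_{L^2}^2$ (this is the content of Proposition~\ref{find-big-ball} in the paper), and by the orthogonality~\eqref{perp-bd} together with Proposition~\ref{psi-phi-bd} this quantity is controlled by the \emph{full tail} $\sum_{2^{-k}\leq r}\|\varphi_{2^{-k}}\ast\One_E\|_{L^2}^2$, not by one band. Here a second difficulty appears, which your pigeonhole also does not address: the hypothesis $\sum_{k\leq m}2^{k}\|\varphi_{2^{-k}}\ast\One_E\|_{L^2}^2\leq Cm$ only yields the tail bound $Cr|\log r|$ at generic scales, and the resulting cube count $\lesssim|\log\delta|\,\delta^{1-n}$ does not contradict Lemma~\ref{intermediate-set-lemma}. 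The paper's Lemma~\ref{inf-energy-lemma} removes the logarithm by averaging over the starting scale $m\in[n,2n]$ and treating the far tail by dyadic blocking, producing arbitrarily small scales $r$ with tail sum $O(r)$; some argument of this kind (a bound on a weighted sum over all finer scales, not a bound at one scale) is what your proposal is missing. The outer architecture you chose --- contrapositive, dyadic decomposition, and the characterization from~\cite{ambrosio2016bmo} --- does match the paper, but the conversion step as written would fail.
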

The proof of this theorem goes through an analysis of the smoothed functions
$\gamma_\eps\ast\One_E$.  The difficulty is that these functions may be so
smooth that $\|\gamma_\eps\ast\One_E\|_{H^{1/2}}^2$ could be very small.
However, using a characterization of sets of finite perimeter due to
Ambrosio, Bourgain, Brezis, and Figalli, we will be able to see that
\[
    \eps^{-1}\|\One_E - \gamma_\eps\ast\One_E\|_{L^2}^2
\]
grows to be large if $E$ is a set of infinite perimeter~\cite{ambrosio2016bmo}.
Decomposing the difference $\One_E - \gamma_\eps\ast\One_E$ over many
scales in the Fourier domain, we will see that there must be at least some
wavelength $\eps'<\eps$ that contributes significantly to the difference.
It is at this wavelength that $\|\gamma_{\eps'}\ast\One_E\|_{H^{1/2}}^2$ is
large.

To make this analysis convenient, we will
make our smoothing kernels compactly supported in Fourier space.  That is, let
$\psi\in C_c^\infty(\Real)$ have support in $[-1,1]$ with
$\psi(\xi) = 1$ for $|\xi|<1/2$.  Then by construction, the differences
$(\psi_r-\psi_{r/2})\ast \One_E$ and $(\psi_h-\psi_{h/2})\ast \One_E$
are orthogonal so long as $r\not\in (h/4,4f)$.  From this we deduce the following
approximate orthogonality property:
\begin{equation}
    \|\psi_r\ast \One_E - \One_E\|_{L^2}^2
    \leq C \sum_{k=0}^\infty \|\psi_{r/2^{k}}\ast \One_E -
    \psi_{r/2^{k+1}}\ast \One_E\|_{L^2}^2.
    \label{perp-bd}
\end{equation}
Next we demonstrate the connection between these differences and the quantity
$\|\gamma_\eps\ast \One_E\|_{H^{1/2}}$ via the kernel described
in~\eqref{special-kernel-defn}.
\begin{proposition}
    With $\varphi$ as defined by Equation~\eqref{special-kernel-defn}, we
    have
    \[
        \|(\psi_r-\psi_{r/2})\ast \One_E\|_{L^2}^2 \leq C
        \|\varphi_{r}\ast \One_E\|_{L^2}^2
    \]
    for any measurable set $E\subset\Real^n$.
    \label{psi-phi-bd}
\end{proposition}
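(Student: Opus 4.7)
The natural plan is to pass to the Fourier side and reduce the claim to a pointwise multiplier comparison. Writing $\widehat{\psi_r}(\xi) = \psi(r\xi)$ and $\widehat{\varphi_r}(\xi) = \widehat{\varphi}(r\xi)$, Plancherel turns both norms into weighted integrals of $|\widehat{\One_E}(\xi)|^2$, so it would suffice to prove
\[
    |\psi(r\xi) - \psi(r\xi/2)|^2 \leq C\, |\widehat{\varphi}(r\xi)|^2
\]
for every $\xi \in \Real^n$. After the substitution $\eta = r\xi$ this becomes an $r$-independent pointwise inequality in $\eta$, from which the proposition follows by integrating against $|\widehat{\One_E}(\xi)|^2$.

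The first key observation is that the left-hand side is supported in a fixed compact annulus. Since $\psi \equiv 1$ on $\{|\eta| \leq 1/2\}$ and $\supp \psi \subset \{|\eta| \leq 1\}$, the difference $\psi(\eta) - \psi(\eta/2)$ vanishes on $\{|\eta| \leq 1/2\}$ (both terms equal $1$) and on $\{|\eta| \geq 2\}$ (both terms vanish). Hence it is supported in $A := \{1/2 \leq |\eta| \leq 2\}$, where it is uniformly bounded by $4\|\psi\|_\infty^2$.

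The second observation is that $|\widehat{\varphi}(\eta)|^2$ is bounded below by a positive constant on $A$. Indeed, by~\eqref{special-kernel-defn},
\[
    |\widehat{\varphi}(\eta)|^2 = |\eta|\left(|\widehat{\gamma}(\eta)|^2 - |\widehat{\gamma}(2\eta)|^2\right),
\]
and because $\widehat{\gamma}$ is a Gaussian, $|\widehat{\gamma}(\cdot)|^2$ is a continuous, strictly decreasing function of $|\eta|$. Consequently the difference in parentheses is continuous and strictly positive for all $\eta \neq 0$, and in particular attains a positive minimum $c_0$ on the compact annulus $A$. Combined with the lower bound $|\eta| \geq 1/2$ on $A$, this yields $|\widehat{\varphi}(\eta)|^2 \geq c_0/2$ on $A$.

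Combining the two observations produces the pointwise multiplier bound with an explicit constant $C = 8\|\psi\|_\infty^2/c_0$, and the proposition follows. I do not foresee a real obstacle: the whole argument rests on the positivity and strict monotonicity of $|\widehat{\gamma}|^2$, which make $\widehat{\varphi}$ nondegenerate precisely on the frequency band isolated by the Littlewood-Paley-style difference $\psi_r - \psi_{r/2}$.
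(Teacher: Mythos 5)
Your proposal is correct and follows essentially the same route as the paper: Plancherel plus the rescaling $\eta = r\xi$ reduce the claim to an $r$-independent pointwise multiplier bound, the Littlewood--Paley difference is supported in a fixed compact annulus away from the origin, and the strict positivity of $\widehat{\varphi}$ there (coming from the strict monotonicity of the Gaussian in $|\widehat{\gamma}(\eta)|^2 - |\widehat{\gamma}(2\eta)|^2$) gives the constant. Your write-up is in fact slightly more explicit than the paper's, which only cites compactness of the annulus and positivity of $\widehat{\varphi}$ on $\Real^n\setminus\{0\}$.
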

\begin{proof}
    By Plancherel's theorem and homogeneity it suffices to check that
    there exists some constant $C$ such that
    \[
        \left|\widehat{\psi}(\xi) - \widehat{\psi}(\xi)\right|^2
        \leq C\left|\widehat{\varphi}(\xi)\right|^2
    \]
    for all $\xi\in\Real^n$.  By construction of $\varphi$, the left hand side
    has support in the annulus $\frac{1}{4}\leq r|\xi|\leq 1$.
    The result follows from the compactness of the annulus and the positivity
    of $\widehat{\varphi}$ on $\Real^n\setminus\{0\}$.
\end{proof}

\begin{lemma}
    Suppose that $E\subset\Real^n$ satisfies
    \[
        \limsup_{\eps\to 0}\frac{1}{|\log\eps|}\|\gamma_\eps\ast \One_E\|_{H^{1/2}}^2
        < \infty.
    \]
    Then
    \begin{equation}
        \liminf_{n\to\infty} 2^n \sum_{k=n}^\infty
        \|\varphi_{2^{-k}}\ast \One_E\|_{L^2}^2 < \infty.
        \label{inf-energy-bd}
    \end{equation}
    \label{inf-energy-lemma}
\end{lemma}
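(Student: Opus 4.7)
The plan is to reduce the hypothesis to a linear growth bound on dyadic partial sums $P_N := \sum_{k=0}^N 2^k\|\varphi_{2^{-k}}\ast\One_E\|_{L^2}^2$, and then bound the Cesaro averages of $T_n := 2^n\sum_{k \geq n}\|\varphi_{2^{-k}}\ast\One_E\|_{L^2}^2$ by a Fubini rearrangement that applies the hypothesis at infinitely many scales simultaneously.

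Write $a_k := \|\varphi_{2^{-k}}\ast\One_E\|_{L^2}^2$. Specializing the Section~\ref{basic-ests-sec} decomposition of $\|\gamma_\eps\ast u\|_{H^{1/2}}^2$ to $\eps = 2^{-N}$ and substituting $j = N-k$ identifies $\|\gamma_{2^{-N}}\ast\One_E\|_{H^{1/2}}^2$ with $\sum_{j=0}^{N-1} 2^j a_j$ up to an additive error of size $O(|E|)$. The hypothesis therefore supplies a constant $C$ and an $N_0$ with $P_N \leq CN$ for all $N \geq N_0$.

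Next, switching the order of summation and shifting indices by $m = n+j$,
\[
\sum_{n=1}^N T_n
= \sum_{j=0}^\infty 2^{-j}\sum_{n=1}^N 2^{n+j} a_{n+j}
= \sum_{j=0}^\infty 2^{-j}\bigl(P_{N+j} - P_j\bigr).
\]
Dropping the nonnegative $P_j$ and applying $P_{N+j} \leq C(N+j)$, which is valid for every $j \geq 0$ as soon as $N \geq N_0$, gives
\[
\sum_{n=1}^N T_n \leq C\sum_{j=0}^\infty \frac{N+j}{2^j} = 2C(N+1).
\]
Hence the Cesaro averages $\frac{1}{N}\sum_{n=1}^N T_n$ are bounded by $2C + O(1/N)$. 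If $\liminf T_n = \infty$, then $T_n > 3C$ for all large $n$, forcing the averages to eventually exceed $3C$, a contradiction. Therefore $\liminf T_n \leq 3C < \infty$, which is exactly the desired conclusion.

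The point worth flagging is that the naive bound, obtained by invoking the hypothesis only at the single scale $\eps = 2^{-N}$, would give only $\sum_{n=1}^N T_n \leq 2P_N + T_{N+1}$, which is powerless because $T_{N+1}$ is precisely the type of quantity we are trying to control. The Fubini rearrangement circumvents this by invoking $P_M \leq CM$ at every $M = N+j$ and dominating the resulting tail via the geometric weights $2^{-j}$, yielding a Cesaro bound independent of any single $T_{N+1}$.
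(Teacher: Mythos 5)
Your argument is correct, and it proceeds from the same starting point as the paper --- both proofs first convert the hypothesis into the linear partial-sum bound $P_N=\sum_{k\le N}2^k\|\varphi_{2^{-k}}\ast\One_E\|_{L^2}^2\le CN$, which is the paper's \eqref{dyadic-H12-bd} --- but the averaging step is organized genuinely differently. The paper splits the tail at $k=2n$: the far part $2^n\sum_{k\ge 2n}a_k$ is bounded uniformly in $n$ by grouping the indices into dyadic blocks $[n2^N,n2^{N+1}]$ and applying \eqref{dyadic-H12-bd} on each, and then the truncated quantity $2^m\sum_{k=m}^{2m}a_k$ is pigeonholed over $m\in[n,2n]$. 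You instead treat the full tail $T_n=2^n\sum_{k\ge n}a_k$ at once, computing its Cesaro means by the rearrangement $\sum_{n\le N}T_n=\sum_{j\ge 0}2^{-j}\bigl(P_{N+j}-P_j\bigr)\le 2C(N+1)$; the geometric weights $2^{-j}$ absorb the linear growth of $P_{N+j}$, so no separate far-tail estimate or truncation is needed. This is a cleaner, single-step version of the same averaging idea, and it gives a quantitative conclusion ($\liminf_n T_n\le 2C$); in fact your final contradiction can be skipped, since for nonnegative $T_n$ one always has $\liminf_n T_n\le\liminf_N\frac1N\sum_{n\le N}T_n$. Two small remarks: the additive error in the scale decomposition is $O(\|\One_E\|_{L^2})=O(|E|^{1/2})$ rather than $O(|E|)$, which is immaterial here; and since the terms are nonnegative, the interchange of sums is justified by Tonelli even before any $T_n$ is known to be finite --- worth stating, because finiteness of the tails is part of what is being established.
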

\begin{proof}
    Using the definition of $\varphi$, the condition on $E$ implies that there
    exists $C$ such that for every integer $n>0$,
    \begin{equation}
        \sum_{k=1}^n 2^k\|\varphi_{2^{-k}} \ast \One_E\|_{L^2}^2 < Cn.
        \label{dyadic-H12-bd}
    \end{equation}
    We first use this inequality to bound the infinite sum
    in~\eqref{inf-energy-bd} in terms of a finite one.  Indeed, by grouping
    the infinite sum into dyadic pieces and applying the bound above in each
    piece, we have
    \begin{align*}
        2^n\sum_{k=2n}^\infty \|\varphi_{2^{-k}}\ast \One_E\|_{L^2}^2
        &\leq 2^n
        \sum_{N=1}^\infty \sum_{k=n2^N}^{n2^{N+1}}\|\varphi_{2^{-k}}\ast\One_E\|_{L^2}^2
        \\&
        \leq 2^n\sum_{N=1}^\infty 2^{-n2^N}
        \sum_{k=n2^N}^{n2^{N+1}} 2^k\|\varphi_{2^{-k}}\ast\One_E\|_{L^2}^2
        \\&
        \leq C 2^n\sum_{N=1}^\infty 2^{-n2^N} n2^{N+1}
    \end{align*}
    which is clearly bounded independently of $n$.  Thus, it suffices to
    show that
    \[
        \liminf_{n\to \infty} 2^n \sum_{k=n}^{2n}
        \|\varphi_{2^{-k}}\ast \One_E\|_{L^2}^2 < \infty.
    \]
    We do this by finding, for each $n>0$, a suitable scale $n\leq m\leq 2n$.
    To see that at least one $m$ suffices we average over all such scales:
    \begin{align*}
        \frac{1}{n} \sum_{m=n}^{2n}
        2^m\sum_{k=m}^{2m}\|\varphi_{2^{-k}}\ast \One_E\|_{L^2}^2
        &\leq
        \frac{1}{n} \sum_{k=n}^{4n}
        \|\varphi_{2^{-k}}\ast \One_E\|_{L^2}^2
        \sum_{m=0}^k 2^m \\
        &\leq
        \frac{2}{n} \sum_{k=n}^{4n} 2^k\|\varphi_{2^{-k}}\ast \One_E\|_{L^2}^2.
    \end{align*}
    The last sum is bounded by using again~\eqref{dyadic-H12-bd}.  Thus it is
    possible to find $m>n$ such that
    \[
        2^m\sum_{k=m}^{2m} \|\varphi_{2^{-k}}\ast \One_E\|_{L^2}^2
    \]
    is bounded independent of $n$.
\end{proof}

The following lemma is a characterization of sets of finite perimeter
that appears in~\cite{ambrosio2016bmo} that we will rely on.  A $\delta$-cube
is any cube in $\Real^n$ with side length $\delta$.
\begin{lemma}[{\cite[Lemma 3.2]{ambrosio2016bmo}}]
    \label{intermediate-set-lemma}
    Let $K>0$ and $E\subset\Real^n$ be a measurable set with $P(E)=\infty$.  Then
    there exists $\delta_0=\delta_0(K,A)$ such that for every $\delta<\delta_0$
    it is possible to find a disjoint collection $\mathcal{U}_\delta$ of
    $\delta$-cubes $Q'$ with $\#\mathcal{U}_\delta > K\delta^{-n+1}$ and
    \[
        2^{-n-1} \leq \frac{|Q'\cap E|}{|E|} \leq 1-2^{-n-1}
    \]
    for every $Q'\in\mathcal{U}_\delta$.
\end{lemma}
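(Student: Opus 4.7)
The plan is to argue by contradiction using the relative isoperimetric inequality. Suppose the conclusion fails: there exist $K_0>0$ and a sequence $\delta_j\to 0$ such that at each scale $\delta_j$, the number of \emph{mixed} $\delta_j$-cubes---those $Q$ with $p(Q) := |Q\cap E|/|Q| \in [2^{-n-1},1-2^{-n-1}]$---is at most $N_j := K_0\delta_j^{-n+1}$. I will build an approximation $F_j$ of $E$ whose perimeter is uniformly bounded on bounded sets and derive a contradiction with $P(E)=\infty$.

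For each $j$, tile $\Real^n$ by $\delta_j$-cubes and set $F_j := \bigcup\{Q : p(Q) > 1/2\}$. Since $F_j$ is a union of cubes, $P(F_j)$ equals $\delta_j^{n-1}$ times the number of $(n-1)$-faces separating a cube in $F_j$ from one outside. Split these faces into two classes. Class (b) consists of faces adjacent to at least one mixed cube; since each mixed cube has $2n$ faces, class (b) contributes at most $2nN_j$ faces, and hence at most $2nK_0$ to $P(F_j)$. Class (a) consists of faces between two non-mixed cubes $Q_1, Q_2$ with $F_j$-classifications on opposite sides; necessarily $p(Q_1)>1-2^{-n-1}$ and $p(Q_2)<2^{-n-1}$. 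On the rectangle $R=Q_1\cup Q_2$ the density of $E$ lies in $[1/2 - 2^{-n-2},\, 1/2 + 2^{-n-2}]$, so $\min(|R\cap E|, |R\setminus E|) \geq \delta_j^n/4$ and the relative isoperimetric inequality on $R$ yields $P(E,R) \geq c_n\delta_j^{n-1}$. Localizing to a bounded open set $V$ and using that each cube sits in at most $2n$ such rectangles, summing over class (a) faces inside $V$ gives $\#\{\text{class (a) faces in }V\} \cdot \delta_j^{n-1} \leq C_n P(E, V^+)$ for a slight thickening $V^+$. Combined with the class (b) bound, $P(F_j, V) \leq C_n P(E, V^+) + 2nK_0$.

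The main obstacle is the last step, namely passing from this uniform bound on $P(F_j, V)$ to an actual bound on $P(E, V)$ and hence on $P(E)$. A direct appeal to lower semicontinuity of $BV$ norms fails because $F_j$ need not converge to $E$ in $L^1_{loc}(V)$---the rounding on non-mixed cubes can produce an $L^1$ error as large as $2^{-n-1}|V|$, independent of $j$. To close the argument I would either iterate the construction at finer scales inside each non-mixed cube, building a Whitney-style hierarchical refinement whose approximations genuinely converge to $E$ in $L^1_{loc}$, or invoke the full nonlocal $BMO$-type characterization of finite-perimeter sets established in~\cite{ambrosio2016bmo}, from which the scarcity of mixed cubes at arbitrarily fine scales forces $P(E)<\infty$ directly. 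Either route yields the required contradiction with $P(E)=\infty$.
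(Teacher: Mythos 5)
Note first that the paper offers no proof of this statement: it is quoted directly from~\cite[Lemma 3.2]{ambrosio2016bmo} (with a typo --- the ratio should be $|Q'\cap E|/|Q'|$, which is how both you and the paper's later application read it). So your attempt is measured against the argument in that reference. Your skeleton --- tile at scale $\delta_j$, round to $F_j=\bigcup\{Q: |Q\cap E|/|Q|>1/2\}$, bound $P(F_j)$ by counting boundary faces, and conclude by lower semicontinuity --- is the right one, but the step where you invoke the relative isoperimetric inequality is exactly where the argument becomes circular. You bound the number of class (a) faces in $V$ by $C_n P(E,V^+)\delta_j^{1-n}$, and $P(E,V^+)$ is precisely what you cannot control: the standing hypothesis is $P(E)=\infty$, and nothing prevents $P(E,V^+)=\infty$ even for bounded $V$. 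The inequality $P(F_j,V)\le C_nP(E,V^+)+2nK_0$ is therefore either vacuous or, if $E$ happens to have locally finite perimeter, yields after lower semicontinuity only $P(E,V)\le C_nP(E,V^+)+2nK_0$, which contradicts nothing. The missing idea is to bound the class (a) faces by the contradiction hypothesis itself: if a face separates a tiling cube of density $>1-2^{-n-1}$ from one of density $<2^{-n-1}$, slide a $\delta_j$-cube continuously across the rectangle $R=Q_1\cup Q_2$; the density along the slide moves continuously from above $1-2^{-n-1}$ to below $2^{-n-1}$, so by the intermediate value theorem some position is a \emph{mixed} $\delta_j$-cube contained in $R$. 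Grouping faces by direction and by parity along that direction, these rectangles (hence the mixed cubes inside them) are pairwise disjoint within each of the $2n$ groups, and your hypothesis bounds \emph{every} disjoint family of mixed $\delta_j$-cubes (not only tiling cubes) by $K_0\delta_j^{1-n}$. Hence the class (a) faces number at most $C_nK_0\delta_j^{1-n}$, which together with your class (b) count gives $P(F_j)\le C_nK_0$ uniformly in $j$.

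The obstacle you flag at the end, by contrast, is not a real one: $\One_{F_j}\to\One_E$ in $L^1_{\mathrm{loc}}$ does hold. By the Lebesgue density theorem a.e.\ $x\in E$ has density $1$; the tiling cube containing $x$ lies inside the cube of side $2\delta_j$ centered at $x$, so its density in $E$ tends to $1>1/2$ and $x\in F_j$ for large $j$, and symmetrically a.e.\ $x\notin E$ eventually leaves $F_j$. Thus $\One_{F_j}\to\One_E$ a.e., hence in $L^1_{\mathrm{loc}}$, and lower semicontinuity gives $P(E,V)\le\liminf_j P(F_j,V)\le C_nK_0$ for every bounded open $V$, so $P(E)\le C_nK_0<\infty$, the desired contradiction; your uniform bound $2^{-n-1}|V|$ is only a worst-case estimate at a fixed scale, not the limit. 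Your two proposed repairs are accordingly misdirected: the Whitney-style refinement addresses a nonexistent convergence problem, and invoking the full BMO-type characterization of~\cite{ambrosio2016bmo} is circular, since the present lemma is an ingredient in the proof of that characterization.
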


\begin{proposition}
    Let $Q=(-\frac{1}{2},\frac{1}{2})^n\subset \Real^n$ be the unit cube.
    Suppose that $E\subset\Real^n$ is a measurable set with
    $2^{-n-1}\leq |E\cap Q|\leq 1-2^{-n-1}$.  Then there exists constants
    $c_n,r_n>0$ such that
    \[
        \|\psi_{r_n}\ast \One_E - \One_E\|_{L^2(Q)}^2 > c_n.
    \]
    \label{find-big-ball}
\end{proposition}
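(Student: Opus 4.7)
The plan is a soft compactness argument: fix any $r_n>0$ (for concreteness take $r_n = 1$) and derive the existence of $c_n$ by contradiction. Suppose no such $c_n$ works. Then there is a sequence of measurable sets $E_k \subset \Real^n$ with $2^{-n-1} \leq |E_k \cap Q| \leq 1 - 2^{-n-1}$ such that the smoothed functions $u_k := \psi_{r_n} \ast \One_{E_k}$ satisfy $\|u_k - \One_{E_k}\|_{L^2(Q)} \to 0$. My goal is to pass to a subsequential limit of the $u_k$ and identify it as both a smooth function and (essentially) the indicator of a nontrivial subset of $Q$, which is impossible.

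The extraction is a direct application of Arzel\`a--Ascoli. Since $\psi$ is Schwartz and $\|\One_{E_k}\|_{L^\infty} \leq 1$, for every multi-index $\alpha$
\[
    \|\partial^\alpha u_k\|_{L^\infty} \;\leq\; \|\partial^\alpha \psi_{r_n}\|_{L^1} \;=\; r_n^{-|\alpha|}\|\partial^\alpha \psi\|_{L^1}
\]
uniformly in $k$. A diagonal extraction thus produces a subsequence (still denoted $u_k$) converging locally uniformly on $\Real^n$ to a smooth function $u$. In particular $u_k \to u$ uniformly on $Q$, so $u_k \to u$ in $L^2(Q)$, and combined with $\|u_k - \One_{E_k}\|_{L^2(Q)} \to 0$ we obtain $\One_{E_k} \to u$ in $L^2(Q)$. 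Extracting once more gives pointwise a.e.\ convergence on $Q$, so $u(x) \in \{0,1\}$ for a.e.\ $x \in Q$.

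The contradiction follows from continuity of $u$ together with connectedness of $Q$: the open set $\{x \in Q : u(x) \notin \{0,1\}\}$ has Lebesgue measure zero, hence is empty, and a continuous $\{0,1\}$-valued function on the connected cube $Q$ must be identically $0$ or identically $1$. But dominated convergence gives
\[
    \int_Q u \, dx \;=\; \lim_{k \to \infty} |E_k \cap Q| \;\in\; [2^{-n-1},\, 1 - 2^{-n-1}],
\]
which excludes both values. No single step here is particularly delicate; the heart of the argument is that $\psi_{r_n}$ is a fixed smoothing kernel (independent of $k$), so its image can only approximate an indicator function to arbitrary $L^2$ precision on $Q$ when the indicator is trivial on $Q$. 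The mildly subtle point is confirming that the $u_k$ admit uniform $C^m$ bounds for every $m$, which in turn requires the fixed scale $r_n$; letting $r_n \to 0$ would destroy the equicontinuity on which the extraction depends.
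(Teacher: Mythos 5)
Your proof is correct, but it is a genuinely different argument from the paper's. The paper proves the proposition directly and quantitatively: it chooses $r_n$ so small that $\|\psi_{r_n}\ast\One_Q-\One_Q\|_{L^1}<2^{-n-2}$, deduces that the average of $\psi_{r_n}\ast\One_E$ over $Q$ is within $2^{-n-2}$ of $|E\cap Q|$, uses continuity to find a point $x_0\in Q$ with $\psi_{r_n}\ast\One_E(x_0)\in(2^{-n-2},1-2^{-n-2})$, and then uses the gradient bound $\|\nabla(\psi_{r_n}\ast\One_E)\|_{L^\infty}\lesssim r_n^{-1}$ to propagate this intermediate value to a ball of radius $\sim 2^{-n-2}r_n$, on which $|\psi_{r_n}\ast\One_E-\One_E|>2^{-n-2}$ pointwise; integrating gives an explicit $c_n$. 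Your compactness-and-contradiction argument trades these explicit constants for softness: it works for any fixed scale (your $r_n=1$ is fine, since the statement only asserts existence of some $r_n$, and the downstream covering argument in Theorem~\ref{limsup-thm} only rescales whatever $r_n$ is chosen), and it uses almost nothing about $\psi$ beyond being a fixed smooth kernel with integrable derivatives — the normalization $\widehat{\psi}(0)=1$ never enters. The only cosmetic imprecision is calling the Arzel\`a--Ascoli limit ``smooth''; locally uniform convergence of the $u_k$ alone gives only continuity of $u$ (smoothness would need a further diagonal extraction over derivatives), but continuity is all your connectedness step uses, so nothing is lost. The paper's route buys explicit, dimension-tracked constants $c_n,r_n$ (not actually needed later), while yours is shorter on bookkeeping and makes transparent that the obstruction is simply that a fixed mollification cannot converge in $L^2(Q)$ to a nontrivial indicator.
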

\begin{proof}
    We choose $r_n$ so small that
    \[
        \|\psi_{r_n}\ast \One_Q - \One_Q\|_{L^1} < 2^{-n-2}.
    \]
    With this choice for $r_n$,
    \begin{align*}
        \left|\int_Q \psi_{r_n}\ast\One_E(x)\,dx
        - |E\cap Q|\right| &=
        \left|\int \One_E(x) \left(\psi_{r_n}\ast \One_Q(x) - \One_Q(x)\right)\,dx
        \right|
        \\&<2^{-n-2}.
    \end{align*}
    It follows from the continuity of $\psi_{r_n}\ast \One_E$ that for some
    point $x_0\in Q$, $\psi_{r_n}\ast \One_E(x_0) \in (2^{-n-2},1-2^{-n-2})$.
    Since $\|\nabla\psi_{r_n}\|_{L^\infty} < r_n^{-1}$,
    one has
    \[
        2^{-n-2}\leq \psi_{r_n}\ast \One_E(y) < 1-2^{-n-2}
    \]
    for any $|y-x_0| < 2^{-n-2}r_n$.  In particular,
    \[
        |\psi_{r_n}\ast \One_E(y) - \One_E(y)| > 2^{-n-2}
    \]
    for $y\in B_{2^{-n-2}r_n}$.  The claim follows upon integrating the above
    bound over $B_{2^{-n-2}r_n}\cap Q$.
\end{proof}
The above lemmas combine in a straightforward manner to prove our main result
for this section.
\begin{proof}[Proof of Theorem~\ref{limsup-thm}]
    Suppose that $E\subset\Real^n$ is a set with
    \[
        \limsup_{\eps\to 0}
        \frac{1}{|\log\eps|}\|\gamma_\eps\ast \One_E\|_{H^{1/2}}^2 < \infty.
    \]
    We will then show that
    \begin{equation}
        \liminf_{\delta>0} \delta^{-1} \|\psi_\delta\ast \One_E-\One_E\|_{L^2}^2
        < \infty.
        \label{liminf-bd}
    \end{equation}
    To show that this implies that $E$ is a set of finite perimeter,
    let $\delta > 0$ be very small and such that
    \[
        \delta^{-1} \|\psi_\delta\ast \One_E-\One_E\|_{L^2}^2 < C.
    \]
    Consider a collection $\mathcal{U}_{\delta/r_n}$ of
    $\delta/r_n$-cubes such that $|Q'\cap E|/|Q'| \in (2^{-n-1},1-2^{-n-1})$
    for all $Q'\in\mathcal{U}_\delta$.  Appropriately scaling the
    conclusion of Proposition~\ref{find-big-ball},
    \[
        \|\psi_{\delta}\ast \One_E - \One_E\|_{L^2(Q')}^2 > \delta^n c_n
    \]
    for all $Q'\in\mathcal{U}_\delta$.  In particular,
    \[
        \delta^n c_n \#\mathcal{U}_\delta <
        \|\psi_\delta \ast \One_E - \One_E\|_{L^2}^2 < \delta C.
    \]
    Thus $\#\mathcal{U}_\delta < K\delta^{1-n}$ for some $K$.  Since this holds
    for arbitrarily small $\delta$, it follows from
    Lemma~\ref{intermediate-set-lemma} that $P(E)<\infty$.

    Now we prove~\eqref{liminf-bd}.  Indeed, according to
    Lemma~\ref{inf-energy-lemma}, we may find $C>0$ and arbitrarily large $n$
    such that
    \[
        2^n\sum_{k=n}^\infty \|\varphi_{2^{-k}}\ast \One_E\|_{L^2}^2 < C.
    \]
    Setting $\delta= 2^{-n}$ and applying Proposition~\ref{psi-phi-bd} and the
    orthogonality property~\eqref{perp-bd} we obtain
    \begin{align*}
        \|\psi_{2^{-n}}\ast \One_E - \One_E\|_{L^2}^2
        &\leq C \sum_{k=n}^\infty \|\psi_{2^{-k}}\ast \One_E
        -\psi_{2^{-k-1}}\ast \One_E\|_{L^2}^2 \\
        &\leq C \sum_{k=n}^\infty \|\varphi_{2^{-k}}\ast \One_E\|_{L^2}^2 \\
        &\leq 2^{-n}C
    \end{align*}
    as desired.
\end{proof}

\section{Approximating the Perimeter of a Set}
\label{sofp-sec}

In this section we consider the case where $E\subset \Real^n$ is a set
of finite perimeter and calculate the limit
$\lim_{r\to 0^+} \frac{1}{r}\|f_r\ast \One_E\|_{L^2}^2$ for kernels
$f$ which satisfy the conditions of Lemma~\ref{basic-ests}.
\begin{theorem}
  \label{sofp-thm}
  Let $f\in C^\infty(\Real^n)$ be a smooth function with $\int f =0$ and
  $\int |x||f|\,dx < \infty$.  Then for any set $E\subset\Real^n$ of finite
  perimeter,
  \begin{equation}
    \lim_{r\to 0^+} \frac{1}{r}\|f_r\ast \One_E\|_{L^2}^2
    = \int_{\partial^*E} F(\nu) \,d\Hausdorff^{n-1}.
    \label{sofp-eq}
  \end{equation}
  where the function $F\in C^{0,1}(\Sphere^{n-1})$ is given
  by the expression in~\eqref{Fnu-defn-eq}.
\end{theorem}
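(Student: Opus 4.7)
The plan is to reduce to a half-space calculation, extend to sets with $C^1$ boundary by tangent-plane approximation, and then handle general sets of finite perimeter via a $C^1$ approximation theorem.

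For the half-space $H_\nu = \{x : x\cdot\nu > 0\}$, the function $f_r\ast \One_{H_\nu}$ depends only on $t = x\cdot\nu$ and so equals a one-dimensional profile $g_r(t) = \int_{\{z\cdot\nu < t\}} f_r(z)\,dz$. The cancellation $\int f = 0$ and the moment bound $\int |x||f|\,dx < \infty$ together show $g_1 \in L^2(\Real)$, with decay $|g_1(t)|\lesssim 1/|t|$, which gives the $L^2$-type integral defining $F(\nu)$ in~\eqref{Fnu-defn-eq}. A direct scaling argument then shows that for any $A\subset \nu^\perp$ with $\Hausdorff^{n-1}(A)<\infty$ and any $L>0$,
\[
\lim_{r\to 0^+}\frac{1}{r}\|f_r\ast \One_{H_\nu}\|^2_{L^2(A + (-L,L)\nu)} = F(\nu)\,\Hausdorff^{n-1}(A).
\]
Lipschitz regularity of $F$ on $\Sphere^{n-1}$ should follow from differentiating $g_1^\nu$ in $\nu$ and again applying the moment bound on $f$.

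Next I would handle the case where $\partial E$ is a compact $C^1$ hypersurface. Given a small $\eta > 0$, cover $\partial E$ by finitely many balls $\{B_{\rho_i}(x_i)\}_{i=1}^N$ on which $\partial E$ is the graph of a $C^1$ function over the tangent hyperplane $x_i+\nu(x_i)^\perp$ with graph norm less than $\eta$. For $r\ll \min_i \rho_i$, I compare $f_r\ast \One_E$ to $f_r\ast \One_{x_i + H_{\nu(x_i)}}$ on each ball. The pointwise local estimate~\eqref{local-conv-leq} applied to the difference $\One_E - \One_{x_i + H_{\nu(x_i)}}$ shows that, on $B_{\rho_i}(x_i)$, these two convolutions differ by an amount whose squared $L^2$ norm is $o(r)$ as first $r\to 0$ and then $\eta \to 0$. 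Summing the patch contributions from the half-space step and using Proposition~\ref{local-limit-lem} to discard the region away from $\partial E$ gives the claimed formula for $C^1$ sets, with the Lipschitz continuity of $F$ used to replace $F(\nu_E(x))$ by $F(\nu_E(x_i))$ on each patch.

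For the general case, invoke Theorem~\ref{C1-approx} to obtain a sequence $E_k$ with $C^1$ boundary such that $|E\triangle E_k|\to 0$, $P(E_k)\to P(E)$, and $\int_{\partial E_k}F(\nu_{E_k})\,d\Hausdorff^{n-1}\to \int_{\partial^* E}F(\nu_E)\,d\Hausdorff^{n-1}$. Setting $v_k = \One_E - \One_{E_k}$, Proposition~\ref{local-limit-lem} applied with $K$ a suitable enlargement of $\partial^*(E\triangle E_k)$ gives $\limsup_{r\to 0}r^{-1}\|f_r\ast v_k\|_{L^2}^2 \to 0$ as $k\to\infty$. Proposition~\ref{continuity-lem} then transfers the limit from the sequence $E_k$ to $E$. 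The main obstacle is the patching step: at scale $r\ll \rho_i$, one needs the convolution to genuinely behave like convolution against the tangent half-space, with remainder summing to $o(r)$ in $L^2$ across all patches. Because only $|x|f\in L^1$ is assumed, the kernel $f_r$ has slow decay and its tails reach past a single patch, so uniform control over these tails — together with the moment bound entering both the half-space calculation and the regularity of $F$ — is what makes the argument close up.
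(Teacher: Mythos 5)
Your outline—half‑space calculation, $C^1$ case by tangent‑plane comparison, general case by $C^1$ approximation—runs in the right direction, and the half‑space piece is essentially what the paper does. But there are two concrete gaps in the later steps.

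\emph{The $C^1$ comparison step is not a pointwise estimate on the difference of convolutions.} You propose to apply~\eqref{local-conv-leq} to $v=\One_E-\One_{x_i+H_{\nu(x_i)}}$ and conclude that the squared $L^2$ difference on $B_{\rho_i}(x_i)$ is $o(r)$ after $r\to 0$ then $\eta\to 0$. This cannot work: the $BV$ norm of $v$ on the ball is $P(E;B_{\rho_i})+P(H_{\nu(x_i)};B_{\rho_i})\approx 2\omega_{n-1}\rho_i^{n-1}$, which is the \emph{same order} as the main term and does not shrink as $\eta\to 0$. Concretely, near the two boundaries the quantity $|f_r\ast v|$ is $O(1)$ (there is no cancellation because the two boundaries are at distance $\gg r$ once $r\ll\eta\rho_i$), so $\frac{1}{r}\|f_r\ast v\|^2_{L^2(B_{\rho_i})}$ stays bounded away from zero. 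The paper's Lemma~\ref{C1-close} gets around this by the change of variables $\Phi(z',t)=(z',t+g(z'))$, which maps $H$ exactly onto $G$, and then compares $f_r\ast\One_G(\Phi(x_0))$ with $f_r\ast\One_H(x_0)$; only after this coordinate match does one see the pointwise difference be $O(\eps^{1/(n+2)})$ over a region of measure $O(\eps^{-\beta}r)$, giving the $o(1)$ bound. Your diagnosis at the end (attributing the difficulty to the slow decay of $f_r$) misses this: the tails are already handled by the $\eps$ in~\eqref{local-conv-leq}; the real obstruction is the lack of cancellation between $\One_E$ and $\One_H$ near the two different boundaries.

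\emph{The passage from $C^1$ to general sets of finite perimeter misuses Theorem~\ref{C1-approx}.} That theorem is a \emph{local} statement: it applies to sets that are $\delta$-close to a half-space $H_\nu$ inside a ball, and crucially produces a graph $G$ with the \emph{strong} error bounds $|G\Delta E|<\eps$ and $P(G\Delta E)<\eps$. There is no global version producing $C^1$ sets $E_k$ with $P(E\triangle E_k)\to 0$ for arbitrary $E$ of finite perimeter, and without that your appeal to Propositions~\ref{local-limit-lem} and~\ref{continuity-lem} collapses: those estimates require $\|v_k\|_{BV}\|v_k\|_{L^\infty}$ small, but for indicator differences $\|v_k\|_{L^\infty}=1$ and $\|v_k\|_{BV}=P(E\triangle E_k)$, which a generic smooth approximation (with $P(E_k)\to P(E)$ and $|E\triangle E_k|\to 0$) does \emph{not} make small — the symmetric difference picks up contributions from both boundaries. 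Restricting $K$ to a neighborhood of $\partial^*(E\triangle E_k)$ does not help since all the variation of $v_k$ lives there. The paper instead first invokes De Giorgi's structure theorem to produce a Vitali cover of $\partial^*E$ by $\delta$-acceptable balls (where $E$ \emph{is} close to a half-space), applies the local Lemma~\ref{local-lem} — and hence the local Acerbi–Fusco type $C^1$ approximation — ball by ball, and controls the complement by Proposition~\ref{local-limit-lem}. That localization is the missing ingredient in your plan; without it the approximation error cannot be made small in the $BV\times L^\infty$ duality.
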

\begin{remark}
    Choosing $f=\varphi$ as defined in~\eqref{special-kernel-defn}, we can
    conclude that for a set of finite perimeter $E$,
    \[
        \lim_{\eps\to 0}\frac{1}{|\log\eps|}
        \|\gamma_\eps\ast \One_E\|_{H^{1/2}}^2 = c_n P(E).
    \]
\end{remark}

The proof of Theorem~\ref{sofp-thm} consists of
a covering argument using De Giorgi's structure theorem,
which reduces the problem to the following local computation.
The local computation takes place over small balls around points in the
reduced boundary $\partial^*E$, where $E$ looks like a half-plane.  To set
up the notation, for each $\nu\in\Sphere^{n-1}$ define the half plane
\[
    H_\nu := \{x\in\Real^n \mid x\cdot\nu \leq 0\},
\]
and let $B$ be the unit ball centered at the origin.

\begin{lemma}
  Let $f$ be as in Theorem~\ref{sofp-thm}, and let $\eps>0$.
  Then there exists $\delta>0$ and $\alpha>0$ such that the following holds:
  If $E\subset\Real^n$ is a set of finite perimeter which is
  close to a half-space $H_{\nu_0}$ in the sense that
  $|(E\Delta H_{\nu_0})\cap B| < \delta$ and
  $P(E;B)-\omega_{n-1}<\delta$, then
  \[
    \limsup_{r\to 0^+}\left|
    \frac{1}{r}\|f_r\ast \One_E\|_{L^2(B)}^2 -
    \int_{\partial^*E}F(\nu)\,d\Hausdorff^{n-1}\right| < \eps^\alpha.
  \]
  where $\nu$ denotes the unit normal to $\partial^*E$.
  \label{local-lem}
\end{lemma}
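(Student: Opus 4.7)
The plan is to combine an exact computation for half-spaces with a perturbation argument exploiting the small De Giorgi excess implied by the hypotheses. For $E = H_\nu$, the rescaling $f_r * \One_{H_\nu}(x) = h_\nu(x\cdot\nu/r)$ with $h_\nu(s) := \int_{z\cdot\nu\geq s}f(z)\,dz$, combined with Fubini over slices of $B$ perpendicular to $\nu$, gives
\[
    \frac{1}{r}\|f_r*\One_{H_\nu}\|_{L^2(B)}^2
    = \omega_{n-1}\int_{-1/r}^{1/r}h_\nu(s)^2(1-r^2s^2)^{(n-1)/2}\,ds.
\]
The cancellation $\int f = 0$ and the decay $|x|f \in L^1$ force $|h_\nu(s)| \leq C/|s|$, so $F(\nu) := \int h_\nu(s)^2\,ds$ is finite; dominated convergence yields $\lim_{r\to 0}\frac{1}{r}\|f_r*\One_{H_\nu}\|_{L^2(B)}^2 = \omega_{n-1}F(\nu)$, confirming the claim for an exact half-space.

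For the perturbation, set $v := \One_E - \One_{H_{\nu_0}}$. Testing the divergence theorem against a smooth cutoff of the constant field $\nu_0$ supported in $\bar B$ (equal to $\nu_0$ on $B(1-\eta)$) and using $|(E\Delta H_{\nu_0})\cap B| < \delta$, one obtains $\int_{\partial^* E\cap B}\nu_E\cdot\nu_0\,d\Hausdorff^{n-1} \geq \omega_{n-1} - C_n(\eta + \delta/\eta)$. Optimizing the cutoff width and combining with $P(E;B)\leq\omega_{n-1}+\delta$ forces the De Giorgi excess $\int_{\partial^* E\cap B}|\nu_E-\nu_0|^2\,d\Hausdorff^{n-1}$ to be $O(\delta^\beta)$ for some $\beta > 0$. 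Since $F\in C^{0,1}(\Sphere^{n-1})$, Cauchy--Schwarz then gives
\[
    \Bigl|\int_{\partial^* E\cap B}F(\nu_E)\,d\Hausdorff^{n-1} - \omega_{n-1}F(\nu_0)\Bigr| \leq C_n\delta^{\beta/2},
\]
which shows the right-hand side is close to $\omega_{n-1} F(\nu_0)$.

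What remains is to show $\limsup_{r\to 0}\frac{1}{r}\|f_r*\One_E\|_{L^2(B)}^2$ is also close to $\omega_{n-1}F(\nu_0)$. A tempting expansion $\|f_r*\One_E\|^2 = \|f_r*\One_{H_{\nu_0}}\|^2 + 2\langle f_r*\One_{H_{\nu_0}},f_r*v\rangle + \|f_r*v\|^2$ followed by separate control of the latter two terms does \emph{not} work: for the translated half-space $E = \{x\cdot\nu_0\leq\tau\}$ with $\tau \sim \delta$ one computes $\lim_r\frac{1}{r}\|f_r*v\|_{L^2(B)}^2 = 2\omega_{n-1}F(\nu_0)$, which is cancelled only by an exactly compensating cross term. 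My plan is instead to use the small excess to parametrize the bulk of $\partial^* E \cap B$ as a Lipschitz graph of slope $o(1)$ over a large subset of $\partial H_{\nu_0}$ (off a remainder of $\Hausdorff^{n-1}$-measure $O(\delta^{\beta/2})$), and rerun the Fubini slicing of the half-space case in tubular coordinates around this graph. Outside an $r^{1-\theta}$-tube of $\partial^* E$ the integrand $(f_r*\One_E)^2$ is pointwise small by the decay of $f$ and $\int f = 0$; inside the tube the one-dimensional profile of $f_r*\One_E$ along the normal is $h_{\nu_E(\cdot)}$ up to controlled error, so slicing reduces the $L^2$ integral fiberwise to $\int_{\partial^* E\cap B}F(\nu_E)\,d\Hausdorff^{n-1}$, which by the previous paragraph is close to $\omega_{n-1}F(\nu_0)$.

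The principal obstacle is this stability step. Because $\frac{1}{r}\|f_r*v\|_{L^2(B)}^2$ need not be small, one cannot apply Proposition~\ref{local-limit-lem} perturbatively; instead the full local $L^2$ norm of $f_r*\One_E$ must be computed directly, which requires producing a quantitative Lipschitz-graph parametrization of $\partial^* E\cap B$ over $\partial H_{\nu_0}$ from the excess bound, and then controlling the tubular-coordinate and remainder errors uniformly as $r\to 0$. Choosing $\delta$ as an appropriate power of $\eps$ then converts the $O(\delta^{\beta/2})$-type error into the stated $\eps^\alpha$.
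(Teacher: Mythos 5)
Your half-space computation and your treatment of the right-hand side (small excess from the two hypotheses, then Cauchy--Schwarz plus the Lipschitz continuity of $F$ to get $\int_{\partial^*E\cap B}F(\nu)\approx\omega_{n-1}F(\nu_0)$) are fine, and your observation that a naive expansion around $\One_{H_{\nu_0}}$ fails because $\frac1r\|f_r\ast(\One_E-\One_{H_{\nu_0}})\|_{L^2}^2$ need not be small is correct and well illustrated by the translated half-space. But the step you yourself flag as ``the principal obstacle'' is exactly where the proof lives, and your sketch of it has a genuine gap. Knowing only that $\partial^*E\cap B$ agrees with a Lipschitz graph $\Gamma_g$ off a set of $\Hausdorff^{n-1}$-measure $O(\delta^{\beta/2})$ is not enough to ``rerun the Fubini slicing in tubular coordinates'': on fibers meeting the bad set the profile argument gives nothing, and the crude bound (integrand $O(1)$ on a tube of width $r^{1-\theta}$ over the bad projection) contributes $\frac1r\cdot r^{1-\theta}\cdot O(\delta^{\beta/2})\sim r^{-\theta}\delta^{\beta/2}$, which diverges as $r\to0$ for fixed $\delta$. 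To beat this you need quantitative control of the \emph{perimeter} of the discrepancy between $E$ and the subgraph (or an equivalent $|D\One_E|$-based maximal estimate near the bad set), not just its volume or the $\Hausdorff^{n-1}$-measure of the boundary mismatch; your proposal never produces or uses such control.

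This is precisely how the paper closes the argument, and your remark that ``one cannot apply Proposition~\ref{local-limit-lem} perturbatively'' is only true when the comparison set is the half-space. The paper invokes the Lipschitz/$C^1$ approximation theorem for sets of small excess (Theorem~\ref{C1-approx}, from~\cite{maggi2012sets}), which gives a subgraph $G$ with $\|g\|_{C^1}<\eps$ \emph{and} $P(G\Delta E)<\eps$; then $v=\One_E-\One_G$ has small $BV$ norm, so Proposition~\ref{local-limit-lem} gives $\limsup_r\frac1r\|f_r\ast v\|_{L^2}^2\leq C\eps$ and Proposition~\ref{continuity-lem} transfers the computation from $G$ to $E$. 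The direct tubular/change-of-variables computation is then only needed for the $C^1$ subgraph (Lemma~\ref{C1-close}, via the diffeomorphism $\Phi(z',t)=(z',t+g(z'))$ matching $\partial G$ to $\partial H_{\nu_0}$), where there is no bad set. So either import the perimeter bound $P(E\Delta G)<\eps$ from the approximation theorem and argue perturbatively off $G$ as the paper does, or supply the missing estimate controlling $f_r\ast\One_E$ near the bad set in terms of $|D\One_E|$; as written, your plan does neither.
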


Before proving this lemma, we first show how it implies Theorem~\ref{sofp-thm}
\begin{proof}[Proof of Theorem~\ref{sofp-thm} using Lemma~\ref{local-lem}]
  Let $\eps>0$, and choose $\delta>0$ according to Lemma~\ref{local-lem}.
  For each $x\in\partial^*E$ and $\rho>0$, we say that the ball $B_\rho(x)$
  is $\delta$-acceptable if
  \begin{align}
    \begin{split}
      |(E \Delta H_\nu(x))\cap B_\rho(x))|
       &< \rho^n \delta \\
      |P(E;B_\rho(x)) - \omega_{n-1}\rho^{n-1}| &< \rho^{n-1} \delta.
      \label{delta-acceptabla}
    \end{split}
  \end{align}
  De Giorgi's structure theorem ensures that the set
  $\mathcal{B}_\delta$ of $\delta$-acceptable balls is a Vitali covering
  of $\partial^* E$.  Thus we may choose a finite set
  $\{B_{\rho_j}(x_j)\}_{j=1}^N$ of disjoint balls such that
  \begin{equation*}
    \Hausdorff^{n-1}(\partial^* E \setminus
    \bigcup_j B_{\rho_j}(x_j)) < \eps.
  \end{equation*}
  Let $R$ denote the remainder set $\Real^n \setminus \bigcup_{j=1}^N B_{\rho_j}(x_j)$.
  The set $R$ is closed and $\|\One_E\|_{BV(R)}=P(E;R)<\eps$,
  so Proposition~\ref{local-limit-lem} guarantees that
  \begin{equation*}
    \limsup_{r\to 0} \frac{1}{r}\|f_r\ast \One_E\|_{L^2(R)}^2 < C\eps,
  \end{equation*}
  and so trivially
  \begin{equation*}
    \limsup_{r\to 0} \left|
    \frac{1}{r}\|f_r\ast \One_E\|_{L^2(R)}^2 -
    \int_{\partial^* E\cap R} F(\hat{n})\,d\Hausdorff^{n-1}\right| < C\eps^\alpha.
  \end{equation*}
  On each $B_{\rho_j}(x_j)$, scaling and applying Lemma~\ref{local-lem} yields
  \begin{equation*}
    \limsup_{r\to 0}
    \left|\frac{1}{r}\|f_r\ast \One_E\|_{L^2(B_{\rho_j}(x_j))}^2
    - \int_{\partial^*E\cap B_{\rho_j}(x_j)} F(\hat{n})\,d\Hausdorff^{n-1}\right|
    < \rho_j^{n-1} \eps^\alpha.
  \end{equation*}
  Summing everything together we obtain
  \begin{equation*}
    \lim_{r\to 0}\left|\frac{1}{r}\|f_r\ast \One_E\|_{L^2(\Real^n)}^2
    - \int_{\partial^* E} F(\hat{n})\,d\Hausdorff^{n-1}\right|
    \leq C\eps^\alpha(1 + \sum_j \rho_j^{n-1}).
  \end{equation*}
  Using again that $B_{\rho_j}(x_j)$ was delta acceptable,
  $\rho_j^{n-1} \leq 2P(E;B_{\rho_j}(x_j))$.  The sum on the right is therefore
  bounded by $2P(E)$.  The conclusion follows in the limit $\eps\to 0$.
\end{proof}

The rest of this section is dedicated to proving Lemma~\ref{local-lem}.  This
is done in a series of steps.  First, we consider the case that $E$ is a
half-space, and so derive a formula for $F(\nu)$.  Second, we allow $E$ to
be given by a $C^1$ surface whose normal is everywhere close to $\nu$,
and perform a computation which involves changing coordinates.
Finally, we prove an approximation lemma which allows us to modify a set
of finite perimeter on a small set so that its surface is $C^1$.

\subsection{The half-space}
In this brief subsection, we derive a formula for $F(\nu)$ so that
Lemma~\ref{local-lem} holds.

We first compute
\begin{equation*}
    f\ast \One_{H_\nu}(x) =
  \int_{y\cdot \nu\leq 0}f(x-y)dy,
\end{equation*}
which only depends on $x\cdot \nu$.  Now, using scale and
translation symmetries
\begin{align}
  \label{H-calcn}
  \lim_{r\to 0} \frac{1}{r}\|f_r\ast \One_{H_\nu}\|_{L^2(B)}^2
  &= \omega_{n-1} \int_\Real |(f\ast \One_{H_\nu})(t\nu)|^2\,dt.
\end{align}
We now work on simplifying the expression
$\int_\Real |(f\ast H_\nu)(t\nu)|^2\,dt$.  We make a first simplification,
which is to define the function $f_\nu\in C^\infty(\Real)$ which takes the
integral of $f$ along planes perpendicular to $\nu$:
\begin{equation}
  f_\nu(t) = \int_{y\cdot\nu = t} f(y)\,d\Hausdorff^{n-1}(y).
\end{equation}
We may now write $(f\ast H_\nu)(t\nu) = f_\nu\ast H(t)$, where $H$ is the
usual Heaviside function on $\Real$.  Moreover, we observe the identity
\[
  \int_\Real |\One_H\ast \phi(t)|^2 \,dt = -\frac{1}{2}
  \int_\Real\int_\Real f(t)f(s)|s-t|\,ds\,dt.
\]
This can be checked by expanding the left hand side and changing the order
of integration, or else by observing that the left hand side
is equivalent to $\int_\Real \phi \Delta^{-1}\phi$, and then using the
fact that $|x|$ is the fundamental solution to the Laplacian in one dimension.

Applying this identity to the right hand side of Equation~\eqref{H-calcn}, we
obtain
\[
    \int_\Real|(f\ast \One_{H_\nu})(t\nu)|^2\,dt =
  -\frac{1}{2} \int_\Real\int_\Real f_\nu(t)f_\nu(s)|s-t|\,dsdt.
\]
This allows us to give a reasonably explicit formula for $F(\nu)$:
\begin{equation}
    \begin{split}
  F(\nu) &= -\frac{1}{2}\int_\Real\int_\Real f_\nu(t)f_\nu(s)|s-t|\,dsdt. \\
  &= -\frac{1}{2}\int_{\Real^n}\int_{\Real^n} f(x)f(y)|(x-y)\cdot\nu|\,dsdt.
  \end{split}
  \label{Fnu-defn-eq}
\end{equation}
We may use this expression to verify that $F(\nu)$ is Lipschitz in $\nu$, as
\begin{align*}
    |F(\nu) - F(\nu')|
    &\leq |\nu-\nu'|\int_{\Real^n}\int_{\Real^n} |f(x)||f(y)||x-y|\,dxdy \\
    &\leq |\nu-\nu'|\int_{\Real^n}\int_{\Real^n} |f(x)||f(y)|(|x|+|y|)\,dxdy
    \\&\leq 2|\nu-\nu'|\|f\|_{L^1}\||x|f\|_{L^1}.
\end{align*}

\subsection{Differentiable graphs}
\label{C1-graphs-sec}
We now show that Lemma~\ref{local-lem} holds in the case that $E$ is the
graph of a $C^1$ function with small gradient.  To state the result we need a
little notation.  Given a unit vector $\nu_0\in\Sphere^{n-1}$, we use
$\pi_{\nu_0}$ to mean the orthogonal projection onto the hyperplane orthogonal
to $\nu_0$.
\begin{lemma}
  \label{C1-close}
  Let $B\subset\Real^n$ denote the unit ball, and let $G\subset \Real^n$
  be a subgraph for a function $g\in C^1(\Real^{n-1})$:
  \[
      G = \{x\in\Real^n; x\cdot\nu_0 \leq g(\pi_{\nu_0}(x)) \}.
  \]
  Suppose that $\|g\|_{C^0} < \eps$ and $\|g\|_{C^1} < \eps$.  Then
  \begin{equation}
    \label{err-est}
    \limsup_{r\to 0}
    \left|\frac{1}{r}\|f_r\ast \One_G\|_{L^2(B)}^2
    - \omega_{n-1}F(\nu_0)\right| \leq C\eps^\alpha
  \end{equation}
  for constants $C>0$ and $\alpha>0$ depending only on the kernel $f$ and
  the dimension $n$.
\end{lemma}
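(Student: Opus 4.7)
Plan: First I would rotate coordinates so that $\nu_0 = e_n$, reducing to $G = \{x_n \leq g(x')\}$ with $\|g\|_{C^0}, \|g\|_{C^1} < \eps$. The strategy is to slice $B$ into vertical fibers $\{(p', x_n) : x_n \in \Real\}$ via Fubini, compare $\One_G$ with the indicator of its tangent half-space on each fiber, apply the one-dimensional half-space formula from Equation~\eqref{Fnu-defn-eq}, and integrate in the base variable $p'$.

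For each $p' \in \Real^{n-1}$, write $p := (p', g(p'))$ and $\nu(p') := (-\nabla g(p'), 1)/\sqrt{1 + |\nabla g(p')|^2}$, and let $H_p := \{x : (x-p)\cdot\nu(p') \leq 0\}$ be the tangent half-space. Splitting $\One_G = \One_{H_p} + (\One_G - \One_{H_p})$, the tangent piece yields a clean fiber formula: by translation and scale invariance $f_r \ast \One_{H_p}(p + s e_n)$ depends only on $s(e_n\cdot\nu(p'))/r$, and a change of variable combined with~\eqref{Fnu-defn-eq} gives
\[
\frac{1}{r}\int_\Real |f_r \ast \One_{H_p}(p + s e_n)|^2\,ds = \frac{F(\nu(p'))}{e_n \cdot \nu(p')}.
\]

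The central estimate is the fiber error bound for $\One_G - \One_{H_p}$. The hypothesis $\|\nabla g\|_\infty \leq \eps$ sandwiches $G \Delta H_p$ between the graph and its tangent plane, a region of vertical thickness at most $2\eps|q'-p'|$ above each base point $q'$. Substituting $w = (x-y)/r$ in the convolution at $x = p + s e_n$, the indicator of "bad" $w$'s (those placing $y$ in the symmetric difference) confines $s$ to an interval of length $\leq 4\eps r|w'|$. Expanding the square, integrating in $s$, and using $\min(|w'|,|v'|) \leq \tfrac{1}{2}(|w|+|v|)$ produces
\[
\frac{1}{r}\int_\Real |f_r \ast (\One_G - \One_{H_p})(p + s e_n)|^2\,ds \leq C\eps\,\|f\|_{L^1}\||x|f\|_{L^1},
\]
and the cross term in $|f_r \ast \One_G|^2$ is handled by Cauchy--Schwarz at a further loss of $\eps^{1/2}$.

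Finally I would integrate the fiber estimates over $p' \in B' := \{p' : (p', g(p')) \in B\}$. The Lipschitz bound $|F(\nu(p')) - F(\nu_0)| \leq C\eps$ derived at the end of Section~4.1, together with $e_n \cdot \nu(p') = 1 + O(\eps^2)$ and $|B'| = \omega_{n-1} + O(\eps)$, gives
\[
\int_{B'} \frac{F(\nu(p'))}{e_n \cdot \nu(p')}\,dp' = \omega_{n-1} F(\nu_0) + O(\eps).
\]
Fibers near $\partial B'$ whose intersection with $B$ is short are absorbed via Proposition~\ref{local-limit-lem} applied to a thin tubular neighborhood of $\partial B$, contributing an acceptable $O(\eps^\alpha)$ error. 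The main obstacle is the fiber error estimate above: the intuition that "$f_r$ cannot tell $G$ from $H_p$ at scale $r$" must be made quantitative, and naive Young's-inequality bounds on $\One_G - \One_{H_p}$ lose the factor of $r$ that makes the limit finite; the fix is to exploit both the thin-tube structure of the symmetric difference and the cancellation $\int f = 0$ simultaneously via the fiberwise square expansion described above.
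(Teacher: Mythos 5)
Your proposal is correct, and it takes a genuinely different route from the paper's proof. The paper compares $G$ with the single half-space $H_{\nu_0}$ via the global change of variables $\Phi(z',t)=(z',t+g(z'))$: it first discards points at distance $\gtrsim \eps^{-\beta}r$ from the two boundaries using the cancellation $\int f=0$, then compares $f_r\ast\One_G(\Phi(x_0))$ with $f_r\ast\One_{H_{\nu_0}}(x_0)$ pointwise at a cost of $C\eps^{1/(n+2)}$, and integrates over the remaining slab of thickness $\eps^{-\beta}r$, ending with $\alpha=(2n+4)^{-1}$. You instead slice into vertical fibers and compare, on each fiber, with the tangent half-space at the graph point over that fiber. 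This buys two things. First, the half-space contribution is exact and scale-invariant on every fiber, $\tfrac1r\int_\Real|f_r\ast\One_{H_p}(p+se_n)|^2\,ds=F(\nu(p'))/(e_n\cdot\nu(p'))$ for all $r$ (consistent with~\eqref{H-calcn} and~\eqref{Fnu-defn-eq}), and since $1/(e_n\cdot\nu)=\sqrt{1+|\nabla g|^2}$ is the area Jacobian, integrating over the base directly produces $\int_{\partial G}F(\nu)\,d\Hausdorff^{n-1}$, so Corollary~\ref{C1-calc} comes for free. Second, your fiber error estimate is dimension-independent: the vertical section of $G\Delta H_p$ over $q'$ indeed has length at most $2\eps|q'-p'|$, and the double-integral bound with $\min(|w'|,|v'|)\le\tfrac12(|w|+|v|)$ correctly yields a fiber error $\le C\eps\|f\|_{L^1}\||x|f\|_{L^1}$, hence $\alpha=\tfrac12$ after Cauchy--Schwarz on the cross term, better than the paper's exponent. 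Two points to make explicit in a write-up: (i) for the lower-bound half of the two-sided estimate the fiber integral is restricted to $\{s:p+se_n\in B\}$, so you need the tail bound $|f\ast\One_{H_\nu}(t\nu)|\le C\min(1,|t|^{-1})$ (from $\int f=0$ and $|x|f\in L^1$) to see the truncation costs only $o_r(1)$ per interior fiber, followed by Fatou; the upper bound holds for every $r$ with no limit needed, since all your fiber bounds are uniform in $r$. (ii) Fibers meeting $B$ whose graph point lies outside $B$ form a base set of measure $O(\eps^2)$, so bounding their contribution by the uniform fiber constant, or by Proposition~\ref{local-limit-lem} on a thin tube as you propose, is sufficient.
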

\begin{remark}
    According to Theorem~\ref{sofp-thm} and the Lipschitz continuity of $F$,
    the correct exponent is $\alpha=1$.  However it is more elementary to
    prove the lemma with smaller $\alpha$.
\end{remark}
\begin{proof}
    From the calculation in the previous section, it is sufficient to bound
    \[
        \frac{1}{r}\left|\|f_r\ast\One_G\|_{L^2(B)}^2
        - \|f_r\ast \One_{H_{\nu_0}}\|_{L^2(B)}^2\right|.
    \]
    As $\nu_0$ is fixed we will write $H=H_{\nu_0}$ for short.  First we will
    control the contribution to the integral from points far away from
    $\partial G$ and $\partial H$.

    Fix a point $x$ away from the boundary $\partial G$,
    and let $\ell=|x\cdot\nu_0-g(\pi_{\nu_0}(x))|$.  Because the function $g$
    is Lipschitz, there exists a constant $c$ such that the ball
    $B_{c\ell}(x)\cap \partial G = 0$.  In particular, by the cancellation
    property $\int f=0$, we have
    \begin{align*}
        f_r\ast \One_G(x)
        &\leq \int_{|x|>c\ell} |f_r(x)|\,dx \\
        &\leq \int_{|y|>c\ell r^{-1}} |f(y)|\,dy \\
        &\leq \frac{Cr}{\ell}\int |y| |f(y)|\,dy \leq \frac{Cr}{\ell}.
    \end{align*}
    Define the set $G_\beta$ of points sufficiently far from $\partial G$ by
    \[
        G_\beta := \{x\in B \mid
        |x\cdot\nu_0-g(\pi_{\nu_0}(x))|>\eps^{-\beta}r\},
    \]
    where the exponent $\beta$ will be chosen later.
    Then from the estimate above we conclude
    \[
        \frac{1}{r}\int_{G_\beta} |f_r\ast \One_G(x)|^2 \,dx
        \leq C \eps^\beta.
    \]
    Analogously, defining
    \[
        H_\beta := \{x\in B \mid |x\cdot \nu_0| > \eps^{-\beta}r\}
    \]
    we have
    \[
        \frac{1}{r}\int_{H_\beta} |f_r\ast \One_H(x)|^2\,dx \leq C\eps^\beta.
    \]

    It remains to control the contribution of the integrand near $\partial G$
    and $\partial H$.  When $r\ll \eps$, it does not suffice to use the triangle
    inequality on the difference $|f_r\ast (\One_G-\One_G)(x)|$ because there
    is not much cancellation.  To achieve cancellation we must change coordinates
    to match the regions on which $f_r\ast \One_G$ and $f_r\ast \One_H$ agree.
    To define this change of variables we write $z=(z',t)\in\Real^n$ where
    $t=z\cdot\nu_0$ and $z'=\pi_{\nu_0}(z)$.  Then the coordinate transformation
    we need is the map $\Phi(z',t)=(z',t+g(z'))$.

    The map $\Phi$ is differentiable with
    differentiable inverse, and is designed so that $\Phi(H)=G$.  Moreover
    we compute the Jacobian
    $\nabla\Phi = Id + \nu_0\otimes (\nabla(g\circ \pi_{\nu_0}))$, so in particular
    \begin{equation}
        |\nabla\Phi - Id| \leq \eps.
        \label{jacobian-close-bd}
    \end{equation}
    Now we compute, using the change of coordinates $y=\Phi(z)$,
    \begin{align*}
        f_r\ast\One_G(\Phi(x_0)) &= \int_G f_r(\Phi(x_0)-y)\,dy \\
        &= \int_G f_r(\Phi(x_0)-\Phi(z))|\det\nabla\Phi|\,dz.
    \end{align*}
    We can now compare this pointwise against $f_r\ast\One_H(x_0)$,
    \begin{align*}
        |f_r\ast \One_G(\Phi(x_0)) - f_r\ast \One_H(x_0)|
            &\leq \int_H |f_r(\Phi(x_0)-\Phi(z))|\det\nabla\Phi| - f_r(x_0-z)|\,dz \\
            &\leq \int_H |\det\nabla\Phi||f_r(\Phi(x_0)-\Phi(z)) - f_r(x_0-z)|\,dz \\
            & \quad+ \int_H ||\det\nabla\Phi|-1||f_r(x_0-z)|\,dz
            \\&= e_1 + e_2.
    \end{align*}
    To control the first error term $e_1$ we split the integral into a near and
    a far part.  For the near part, we use the fact that $|\nabla f_r|<Cr^{-1-n}$
    and the bound~\eqref{jacobian-close-bd} to deduce that
    \[
        |f_r(\Phi(x_0)-\Phi(z)) - f_r(x_0-z)| \leq |x_0-z|\eps r^{-1-n}.
    \]
    We now estimate
    \begin{align*}
        e_1 &\leq
        \eps \int_{|x_0-z|<\eps^{-a}r} |x_0-z| r^{-1-n}\,dz
        + C \int_{|x_0-z|\geq \eps^{-a}r} |f_r(x_0-z)|\,dz
        \\
        &\leq C \eps^{1-(n+1)a} + C \eps^a.
    \end{align*}
    Choosing $a=(n+2)^{-1}$ we arrive at $e_1 \leq C \eps^{1/(n+2)}$.
    Using again the Jacobian bound~\eqref{jacobian-close-bd} and the fact that
    $f_r$ is bounded in $L^1$, we can easily estimate $e_2 < C\eps$.  In
    summary
    \[
        |f_r\ast \One_G(\Phi(x_0)) - f_r\ast \One_H(x_0)| < C\eps^{\frac{1}{n+2}}.
    \]
    We can finally estimate
    \[
        \frac{1}{r}\Bigg|
        \int_{B\setminus G_\beta} |f_r\ast \One_G(x)|^2\,dx -
        \int_{B\setminus H_\beta} |f_r\ast \One_H(x)|^2\,dx \Bigg|
    \]
    by changing variables.  The main contribution comes from the integral
    over $B\setminus H_\beta \cap \Phi^{-1}(B\setminus G_\beta)$
    where the integrand is bounded by
    $C\eps^{1/(n+2)}$ and the volume of integration is $\eps^{-\beta}r$.
    Choosing now $\beta=(2n+4)^{-1}$, we have shown the lemma with exponent
    $\alpha = (2n+4)^{-1}$.
\end{proof}

Because $F(\nu)$ is Lipschitz, the normal to $\partial G$ is $\eps$-close to $\nu$,
and the map from $B'$ to
$\partial G$ nearly preserves area, we are led to the following corollary.
\begin{corollary}
  \label{C1-calc}
  Let $B$ and $G$ as in Lemma~\ref{C1-close}, and let $\nu$ denote the
  unit normal to $\partial^* G$.  Then for the same constant $\alpha$
  appearing in Lemma~\ref{C1-close},
  \[
  \limsup_{r\to 0} \left|\frac{1}{r}\|f_r\ast G\|_{L^2(B)}^2
  - \int_{\partial^*G} F(\nu) \,d\Hausdorff^{n-1}\right| \leq C\eps^\alpha.
  \]
\end{corollary}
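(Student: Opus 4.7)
The plan is to reduce the corollary to Lemma~\ref{C1-close} by showing that the geometric integral $\int_{\partial^{*}G\cap B} F(\nu)\,d\Hausdorff^{n-1}$ is itself $O(\eps)$-close to the single number $\omega_{n-1}F(\nu_{0})$. Applying the triangle inequality in the form
\[
    \left|\frac{1}{r}\|f_{r}\ast\One_{G}\|_{L^{2}(B)}^{2} - \int_{\partial^{*}G} F(\nu)\,d\Hausdorff^{n-1}\right|
    \leq
    \left|\frac{1}{r}\|f_{r}\ast\One_{G}\|_{L^{2}(B)}^{2} - \omega_{n-1}F(\nu_{0})\right|
    + \left|\omega_{n-1}F(\nu_{0}) - \int_{\partial^{*}G\cap B} F(\nu)\,d\Hausdorff^{n-1}\right|,
\]
the first term on the right is exactly what Lemma~\ref{C1-close} controls by $C\eps^{\alpha}$. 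I would therefore focus all remaining effort on the second term.

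To estimate it, I would parametrize $\partial G \cap B$ as the graph $\{(z',g(z')) : z'\in B'\}$, where $B'\subset \nu_{0}^{\perp}$ is the projection of $\partial G\cap B$ onto the hyperplane orthogonal to $\nu_{0}$. Three elementary facts are used: (i) since $\|g\|_{C^{0}}<\eps$, the set $B'$ differs from the $(n-1)$-dimensional unit ball (of volume $\omega_{n-1}$) in Lebesgue measure by at most $C\eps$; (ii) the outward unit normal
\[
    \nu(z') = \frac{(-\nabla g(z'),\,1)}{\sqrt{1+|\nabla g(z')|^{2}}}
\]
satisfies $|\nu(z')-\nu_{0}|\leq C\eps$ pointwise, by $\|\nabla g\|_{L^{\infty}}<\eps$; and (iii) the area element $\sqrt{1+|\nabla g(z')|^{2}}\,dz'$ lies in $[1,\,1+C\eps^{2}]$. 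Feeding these into the area formula,
\[
    \int_{\partial^{*}G\cap B} F(\nu)\,d\Hausdorff^{n-1}
    = \int_{B'} F(\nu(z'))\sqrt{1+|\nabla g(z')|^{2}}\,dz',
\]
and invoking the Lipschitz continuity of $F$ on $\Sphere^{n-1}$ (established at the end of the half-space subsection), the integral differs from $\omega_{n-1}F(\nu_{0})$ by at most $C\eps$, with $C$ depending only on $\|F\|_{\Lipschitz}$ and $n$.

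Combining these two pieces and using $\eps\leq \eps^{\alpha}$ for $\alpha\leq 1$ and small $\eps$, the corollary follows with the same exponent $\alpha$ produced in Lemma~\ref{C1-close}. There is no real analytic obstacle: the work is purely geometric bookkeeping, and the only subtlety is to confirm that $\int_{\partial^{*}G}$ in the statement should be read as $\int_{\partial^{*}G\cap B}$, matching the $L^{2}(B)$ localization on the left-hand side; otherwise the normal direction of the flat part of $\partial G$ (which equals $\nu_{0}$) already contributes the exact constant $F(\nu_{0})$ per unit area and no further modification is needed.
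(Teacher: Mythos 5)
Your argument is correct and is exactly the route the paper takes: the paper states the corollary with only the one-line justification that $F$ is Lipschitz, the normal of $\partial G$ is $\eps$-close to $\nu_0$, and the graph map nearly preserves area, which are precisely the three facts you verify before invoking Lemma~\ref{C1-close} and $\eps\leq\eps^{\alpha}$. Your remark that $\int_{\partial^{*}G}$ must be read as $\int_{\partial^{*}G\cap B}$ is also the intended interpretation, matching the localization to $B$ on the left-hand side.
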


\subsection{Sets of finite perimeter}
\label{sofp-approx}
Finally, the case of sets of finite perimeter can be resolved using an
approximation result that allows us to locally replace the reduced boundary
$\partial^*E$ by the graph of a Lipschitz function.  This can be done for
sets which are close to half-spaces in the sense of Lemma~\ref{local-lem}.

\begin{definition}
  If $E\subset B_1$, and $\delta>0$, we say that $E$ is \emph{$\delta$-close}
  to the half space $H_\nu$ if the following bounds hold:
  \begin{align*}
    |E \Delta H_\nu| &< \delta \\
    |P(E;B) - \omega_{n-1}| &< \delta.
  \end{align*}
\end{definition}

The following structural result from the theory of minimal surfaces allows us
to approximate the boundaries of sets $E$ that are $\delta$-close to the half
space $H_\nu$ by the graph of a $C^1$ function.  The theorem is analgous to the
Lipschitz truncation of Sobolev functions introduced by Acerbi and
Fusco to study lower semicontinuity of functionals appearing in calculus of
variations~\cite{acerbi1988approximation} and can be proven using similar methods.
The theorem in~\cite{maggi2012sets} is more general, and shows that if $E$ is
close to having minimal area, then one can even ask that $g$ is close to a
harmonic function.
\begin{theorem}[Consequence of {\cite[Theorem 23.7]{maggi2012sets}}]
  For any $\eps>0$, there exists a $\delta>0$ such that for all
  sets of finite perimeter $E\subset B$ which are $\delta$-close to $H_\nu$,
  there exists a function $g\in C^1(\Real^{n-1})$ whose subgraph
  \begin{equation}
    G = \{x\in B; x\cdot \nu \leq g(\pi_\nu(x))\}
    \label{subgraph-defn}
  \end{equation}
  satisfies $|G\Delta E|<\eps$, $P(G\Delta E)<\eps$, and
  $\|g\|_{C^1} < \eps$.
  \label{C1-approx}
\end{theorem}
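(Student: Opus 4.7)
The plan is to derive the result from the Lipschitz approximation provided by Theorem 23.7 of~\cite{maggi2012sets} and then upgrade the Lipschitz function to a $C^1$ function by mollification. The Maggi theorem is stated in greater generality (almost-minimizers, closeness to harmonic functions), but its non-minimal part already contains the Lipschitz approximation for arbitrary sets of finite perimeter close to a half-space, which is all I will need.

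First, I would invoke the Lipschitz version of the statement: for any $\eta > 0$ there exists $\delta > 0$ such that if $E$ is $\delta$-close to $H_\nu$, then one can find a Lipschitz function $\tilde g : \Real^{n-1} \to \Real$ with $\|\tilde g\|_{C^0} + \|\tilde g\|_{\Lipschitz} < \eta$ whose subgraph $\tilde G$ satisfies $|\tilde G \Delta E| + P(\tilde G \Delta E) < \eta$. The qualitative ingredients behind this are: $|E \Delta H_\nu| < \delta$ forces $\partial^* E$ to lie mostly in a thin slab around $\nu^\perp$; combined with $|P(E;B) - \omega_{n-1}| < \delta$ and the lower semicontinuity of perimeter, this forces $\int_{\partial^* E \cap B}|\nu_E - \nu|^2 \,d\Hausdorff^{n-1}$ to be small; a Whitney-type Lipschitz truncation in the spirit of Acerbi--Fusco~\cite{acerbi1988approximation} applied to the graphical portion of $\partial^* E$ then produces $\tilde g$.

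Second, I would mollify. Fix a standard mollifier $\rho \in C^\infty_c(\Real^{n-1})$ and set $g := \rho_\sigma \ast \tilde g$ at a scale $\sigma>0$ to be chosen. Standard estimates yield $\|g\|_{C^1} \leq C\|\tilde g\|_{\Lipschitz} < C\eta$, $\|g\|_{C^0} \leq \|\tilde g\|_{C^0} < \eta$, and $\|g - \tilde g\|_{L^\infty} \leq C\sigma\|\tilde g\|_{\Lipschitz} \leq C\sigma\eta$. Letting $G$ denote the subgraph of $g$, the symmetric difference $G \Delta \tilde G$ is a thin sliver between the two graphs, so $|G \Delta \tilde G| \leq |\pi_\nu(B)| \cdot \|g - \tilde g\|_{L^\infty}$; and by the area formula for Lipschitz graphs together with $\|\tilde g\|_{\Lipschitz}, \|g\|_{\Lipschitz} < C\eta$, the perimeter $P(G \Delta \tilde G)$ is bounded by a constant times $\Hausdorff^{n-1}(\{g \neq \tilde g\})$ plus a lateral contribution of order $\sigma\eta$, both of which can be made arbitrarily small by choosing $\sigma$ properly. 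Feeding this through the triangle inequalities
\[
|G \Delta E| \leq |G \Delta \tilde G| + |\tilde G \Delta E|, \qquad P(G \Delta E) \leq P(G \Delta \tilde G) + P(\tilde G \Delta E),
\]
and selecting $\sigma = \sigma(\eta)$ small, then $\eta = \eta(\eps)$ small, delivers all three required bounds.

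The main obstacle is genuinely Step one, the Lipschitz approximation, which is the heart of~\cite{maggi2012sets} and is why the theorem is quoted rather than proved from scratch. The mollification in Step two is essentially routine: the $C^1$ smallness transfers automatically from the Lipschitz smallness, and the only slightly delicate point is verifying that mollification does not inflate the perimeter of the symmetric difference, which is guaranteed by the smallness of $\|\tilde g\|_{\Lipschitz}$ through the area formula.
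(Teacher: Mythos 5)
The paper itself gives no proof of this theorem --- it is quoted as a black box from \cite{maggi2012sets}, with the remark that it can be proven by an Acerbi--Fusco-type truncation --- so black-boxing the Lipschitz approximation in your Step 1 is consistent with the paper's treatment. Be aware, though, that Theorem 23.7 of \cite{maggi2012sets} is stated for $(\Lambda,r_0)$-perimeter minimizers and yields a graph with $\Lipschitz(u)\le 1$ together with smallness of $\int|\nabla u|^2$ in terms of the excess, not $\|\tilde g\|_{\Lipschitz}<\eta$; extracting the small-Lipschitz-constant statement for an arbitrary set of finite perimeter that is $\delta$-close to $H_\nu$ requires rerunning the maximal-function truncation at a small slope threshold (possible because the excess is small), which is presumably why the paper says ``consequence'' and ``similar methods'' rather than citing the theorem verbatim. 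So your Step 1 is an acceptable black box, but slightly overstates what the quoted theorem literally provides.

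The genuine gap is Step 2. Uniform closeness of $g$ and $\tilde g$ controls only the \emph{volume} of the sliver $G\Delta\tilde G$, not its perimeter: projecting onto the hyperplane $\nu^\perp$ gives $P(G\Delta\tilde G)\ge 2\,\Hausdorff^{n-1}(\{g\ne\tilde g\})$, and for a generic Lipschitz $\tilde g$ (whose gradient oscillates at all scales) the mollification $\rho_\sigma\ast\tilde g$ differs from $\tilde g$ on a set of nearly full measure for \emph{every} $\sigma>0$; compare the model case $\tilde g\equiv 0$, $g\equiv\sigma$, where $|G\Delta\tilde G|\approx\sigma$ but $P(G\Delta\tilde G)\approx 2\omega_{n-1}$. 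Hence your claim that $\Hausdorff^{n-1}(\{g\ne\tilde g\})$ ``can be made arbitrarily small by choosing $\sigma$ properly'' is false, and the triangle inequality then gives no control on $P(G\Delta E)$ --- which is precisely the bound the paper needs, since $P(G\Delta E)<\eps$ is what gets fed into Proposition~\ref{continuity-lem}. The correct way to upgrade from Lipschitz to $C^1$ is a Lusin/Whitney-type approximation (Federer, or Evans--Gariepy): there exists $g\in C^1(\Real^{n-1})$ with $\Hausdorff^{n-1}\bigl(\{g\ne\tilde g\}\cup\{\nabla g\ne\nabla\tilde g\}\bigr)$ arbitrarily small and $\|g\|_{C^1}\le C(n)\bigl(\|\tilde g\|_{C^0}+\Lipschitz(\tilde g)\bigr)$; with that choice the sliver estimate you wrote does apply, the two reduced boundaries coincide (with matching normals) off a set of small $\Hausdorff^{n-1}$-measure, and all three required bounds follow. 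Alternatively, the $C^1$ upgrade can be folded into the truncation itself, which is the Acerbi--Fusco route the paper alludes to; mollification alone cannot do the job.
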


\begin{proof}[Proof of Lemma~\ref{local-lem} using Theorem~\ref{C1-approx}]
  Let $\eps>0$, and take $\delta>0$ according to Theorem~\ref{C1-approx}.
  If $E$ is $\delta$-close to $H_\nu$, then we can apply
  Theorem~\ref{C1-approx} to obtain a $G\subset B$ with the property
  that $P(G\Delta E) < \eps$. From
  Propositions~\ref{local-limit-lem} and~\ref{continuity-lem}, we see that
  \[
      \limsup_{r\to 0}\left|
      \frac{1}{r}\|f_r\ast \One_E\|{L^2(B)}^2
      - \frac{1}{r}\|f_r\ast\One_G\|_{L^2(B)}^2\right| < C\eps.
  \]
  Moreover, since the normals of $G$ and $E$ agree except on a small set,
  \[
      \left|\int_{\partial^*E}F(\nu)\,d\Hausdorff^{n-1}
      -\int_{\partial G}F(\nu)\,d\Hausdorff^{n-1}\right| < C\eps,
  \]
  we can conclude upon using Corollary~\ref{C1-calc}.
\end{proof}

\section{Results on bounded functions of bounded variation}
\label{no-jumps-sec}
\subsection{The $H^{1/2}$ norm finds the jump set}
In this section we prove that the decay of the Fourier modes of a function
in $BV\cap L^\infty$ is tied to a quadratic integral over the jump set.
\begin{theorem}
    Let $\eta>0$ be a small parameter and $D\subset\Real^n$ be a closed set.
    Then there exists a constant $C(\eta)$ such that for for any
    $u\in BV\cap L^\infty(\Real^n)$,
    \begin{equation}
        \limsup_{r\to 0^+} \frac{1}{r} \|f_r\ast u\|_{L^2(D)}^2
        \leq \eta\|u\|_{BV(\Real^n)}\|u\|_{L^\infty(\Real^n)} +
        C(\eta) \int_{J_u\cap D} |u^+-u^-|^2 \,d\Hausdorff^{n-1}.
        \label{bound-by-jump}
    \end{equation}
    \label{jump-bound-thm}
\end{theorem}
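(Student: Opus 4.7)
The plan is to quantize $u$ at scale $\delta>0$ by its superlevel sets, apply Theorem~\ref{sofp-thm} level by level, and isolate the jump contribution from overlaps of reduced boundaries. Fix $\eta>0$ and set $u_\delta := \delta\lfloor u/\delta\rfloor$ (after averaging over a small shift in the quantization origin, to avoid alignment pathologies where level multiples $k\delta$ cluster near jump values of $u$), so that $\|u-u_\delta\|_\infty\leq \delta$ and $\|u_\delta\|_{BV}\leq \|u\|_{BV}$. Proposition~\ref{local-limit-lem} gives $\limsup_r r^{-1}\|f_r\ast(u-u_\delta)\|_{L^2}^2 \leq 2C\delta\|u\|_{BV}$, and the Cauchy--Schwarz step in the proof of Proposition~\ref{continuity-lem} bounds the resulting cross-term error by $C'\sqrt{\delta\|u\|_{BV}\|u\|_\infty}\cdot\sqrt{\|u\|_{BV}\|u\|_\infty}$. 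Both errors are absorbed into $\tfrac{\eta}{2}\|u\|_{BV}\|u\|_\infty$ by choosing $\delta$ of order $\eta^2\|u\|_\infty$.

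For (shifted) a.e.\ $\delta$, I will write $u_\delta = \delta\sum_{k\in K}\One_{E_k}$ up to an additive constant (which is annihilated by $f_r$ since $\int f=0$), where $E_k = \{u>k\delta\}$ is of finite perimeter by coarea and $K$ is finite since $u\in L^\infty$. Expanding,
\[
    \tfrac{1}{r}\|f_r\ast u_\delta\|_{L^2(D)}^2 = \delta^2\sum_{k,l\in K}\tfrac{1}{r}\langle f_r\ast\One_{E_k},f_r\ast\One_{E_l}\rangle_{L^2(D)}.
\]
A polarization using $\One_{E_k}=\One_{E_l}+\One_{E_k\setminus E_l}$ when $E_l\subset E_k$, combined with Theorem~\ref{sofp-thm} applied to the three indicators $\One_{E_k}$, $\One_{E_l}$, $\One_{E_k\setminus E_l}$, yields the cross-term formula
\[
    \lim_{r\to 0}\tfrac{1}{r}\langle f_r\ast\One_{E_k},f_r\ast\One_{E_l}\rangle_{L^2(D)} = \int_{\partial^*E_k\cap\partial^*E_l\cap D}F(\nu)\,d\Hausdorff^{n-1},
\]
where the common normal $\nu$ is well defined on the overlap because nested sets of finite perimeter share the same blow-up half-space there.

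The geometric heart of the argument is the claim that $\partial^*E_k\cap\partial^*E_l\subseteq J_u$ modulo $\Hausdorff^{n-1}$-null sets whenever $k\neq l$: at any $x\notin J_u$ an approximate limit $u(x)$ exists, forcing $x\in\partial^*\{u>t\}$ for at most one value of $t$. Off-diagonal terms therefore concentrate on $J_u$, and near a jump point $x\in J_u$ with jump range $(u^-(x),u^+(x))$ the indices $k$ with $x\in\partial^*E_k$ are precisely those with $k\delta\in(u^-(x),u^+(x))$, giving roughly $(u^+-u^-)/\delta$ many. Summing over the $\approx((u^+-u^-)/\delta)^2$ ordered pairs and multiplying by $\delta^2 F(\nu(x))$ contributes $\approx (u^+-u^-)^2 F(\nu)$ per jump point, so integrating over $J_u\cap D$ yields a jump contribution bounded by $\|F\|_\infty\int_{J_u\cap D}|u^+-u^-|^2\,d\Hausdorff^{n-1}$, with $\|F\|_\infty$ playing the role of $C(\eta)$. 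The residual diagonal piece from $\partial^*E_k\setminus J_u$ is $\delta^2\sum_k\int_{\partial^*E_k\setminus J_u}F\,d\Hausdorff^{n-1}\leq \delta\|F\|_\infty\|u\|_{BV}$ by coarea, again absorbed into $\tfrac{\eta}{2}\|u\|_{BV}\|u\|_\infty$ by the earlier choice of $\delta$. The hard part will be rigorously verifying the geometric claim, which rests on the BV structure theorem preventing two distinct superlevel sets from simultaneously exhibiting density-$\tfrac{1}{2}$ behavior outside the jump set, together with the shift-averaging technicality needed to control small-jump contributions $\delta^2\cdot\Indicator_{m(x)=1}$ when $\Hausdorff^{n-1}(J_u)$ is infinite.
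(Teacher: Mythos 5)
Your strategy is genuinely different from the paper's. The paper never touches Theorem~\ref{sofp-thm}: assuming the limsup exceeds $\eta\|u\|_{BV}\|u\|_{L^\infty}$, it uses only the elementary bounds of Lemma~\ref{basic-ests} to produce superlevel sets $A_k'$ of $|f_{r_k}\ast u|$, passes to the limsup set $A\subset D$ with $|Du|(A)\gtrsim_\eta \|u\|_{BV}$ and $\Hausdorff^{n-1}(A)\lesssim_\eta \|u\|_{BV}/\|u\|_{L^\infty}$ via a $5r$-covering argument, and concludes that the jump integral over $D$ must be $\gtrsim_\eta \|u\|_{BV}\|u\|_{L^\infty}$. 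Your coarea-quantization scheme is attractive and, if completed, would in fact be sharper: the constant in front of the jump integral would be essentially $\|F\|_\infty$, independent of $\eta$, with the $\eta$-term only absorbing the quantization error. The ingredients you flag as the ``hard part'' are actually the safe ones: that $\partial^*\{u>s\}\cap\partial^*\{u>t\}\subset J_u$ up to $\Hausdorff^{n-1}$-null sets for $s\neq t$, with a common normal, the counting of levels crossing a jump, and the shift-averaging to handle small jumps and the Riemann-sum versions of coarea are all standard BV structure theory.

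The genuine gap is the localization to $D$. Theorem~\ref{sofp-thm} is a statement in $L^2(\Real^n)$, while your cross-term identity asserts an exact limit for $\tfrac{1}{r}\langle f_r\ast\One_{E_k},f_r\ast\One_{E_l}\rangle_{L^2(D)}$ on an arbitrary closed set $D$. That is false as stated: already for $n=1$, $E=(-\infty,0]$ and $D=\{0\}\cup[1,2]$ one has $\lim_{r\to0}\tfrac{1}{r}\|f_r\ast\One_E\|_{L^2(D)}^2=0$ while $\int_{\partial^*E\cap D}F\,d\Hausdorff^0=F(\nu)>0$. Only the upper bound localizes to general closed sets, and upper bounds alone do not close your polarization step, because $\|f_r\ast\One_{E_k\setminus E_l}\|_{L^2(D)}^2$ enters with a minus sign and its localized liminf can fall strictly below $\int_{\partial^*(E_k\setminus E_l)\cap D}F$; with only one-sided information the off-diagonal terms are no longer confined to $J_u\cap D$ and you recover nothing better than $C\|u\|_{BV}\|u\|_{L^\infty}$. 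This is repairable, e.g.\ by replacing $D$ with $\overline{D+B_\rho}$ for a.e.\ $\rho$ (so that its topological boundary is $\Hausdorff^{n-1}$-negligible for the finitely many sets $E_k$, $E_k\setminus E_l$ involved), proving a two-sided localized version of Theorem~\ref{sofp-thm} on such sets by rerunning the covering argument of Section~\ref{sofp-sec} together with Proposition~\ref{local-limit-lem}, polarizing there, and letting $\rho\to0$ by monotone convergence using that $D$ is closed --- but that localized theorem is not in the paper and is a nontrivial piece of work your proposal presupposes without supplying.
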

\begin{remark}
    When $f$ is a function with compact support it is possible to refine the
    argument below to prove that
    \[
        \limsup_{r\to 0^+} \frac{1}{r} \|f_r\ast u\|_{L^2(D)}^2
        \leq C \int_{J_u\cap D} |u^+-u^-|^2 \,d\Hausdorff^{n-1}.
    \]
    However, the constant $C$ depends on the radius of the support of $f$.
    I would guess that this bound remains true when $|x|f\in L^1$, but the
    methods here do not seem to suffice to prove it.
\end{remark}
\begin{proof}
    We can assume that
    \begin{equation}
        \limsup_{r\to 0^+} \frac{1}{r} \|f_r\ast u\|_{L^2(D)}^2
        > \eta \|u\|_{BV(\Real^n)}\|u\|_{L^\infty(\Real^n)},
        \label{limsup-big}
    \end{equation}
    for otherwise the bound is trivial.  In this case we will find a set
    $A\subset D$ such that
    $\Hausdorff^{n-1}(A) \lesssim \|u\|_{BV}\|u\|_{L^\infty}^{-1}$
    and $|Du|(A) \gtrsim \|u\|_{BV}$.  In particular we will use this to
    show that
    \[
        \|u\|_{BV}\|u\|_{L^\infty} \lesssim
        \int_A |u^+-u^-|^2 \,d\Hausdorff^{n-1},
    \]
    from which the claim will follow.

    The proof is divided into three steps.  In the first step, we find
    approximations $A_k$ to the set $A$ and analyze them.  Next, the sets
    $A_k$ are used to show that the limit $A$ has the desired properties.
    Finally, the existence of $A$ is used to prove~\eqref{bound-by-jump}.
    These sets are depicted in Figure~\ref{Ak-figure}.

    \begin{figure}
        \begin{center}
            \includegraphics{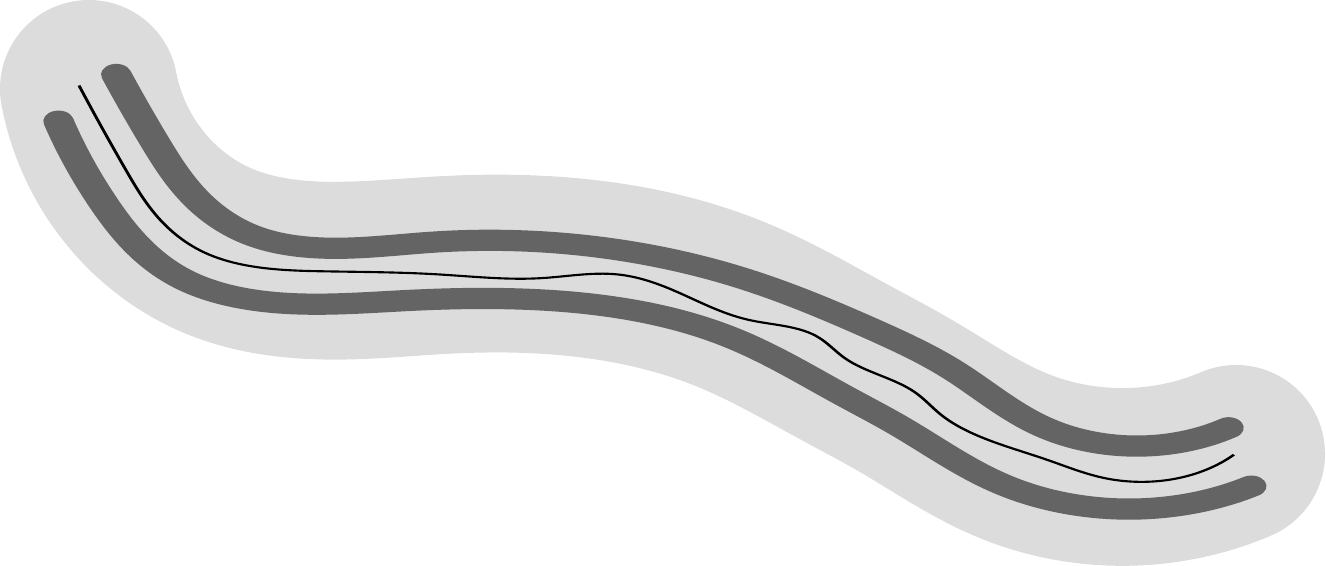}
        \end{center}
        \caption{A depiction of the construction of $A$.  The solid line
            represents a portion of $J_u$ on which $u$ has a significant jump.
            In the dark gray region $A'_k$, one has that $f_r\ast u$ is large,
            but it may be disjoint from $J_u$.
        The thicker region $A_k$ contains a portion of the jump set.}
        \label{Ak-figure}
    \end{figure}

    \vskip 12pt \noindent \emph{Step 1: Finding $A_k$}.
    With the assumption~\eqref{limsup-big} we can find a sequence $r_k\to 0$
    such that
    \[
        \limsup_{r\to 0^+} \frac{1}{r} \|f_r\ast u\|_{L^2(D)}^2
        > \frac{\eta}{2} \|u\|_{BV(\Real^n)}\|u\|_{L^\infty(\Real^n)}.
    \]
    On the other hand we have the bound
    \[
        \frac{1}{r} \|f_r\ast u\|_{L^2(D)}^2
        \leq \left(\frac{1}{r}\|f_r\ast u\|_{L^1(D)}\right)
        \left(\|f_r\ast u\|_{L^\infty(D)}\right).
    \]
    Thus, applying~\eqref{Linf-conv-leq} and~\eqref{BV-conv-leq} from
    Lemma~\ref{basic-ests} and rearranging we obtain the bounds
    \begin{align}
        \frac{1}{r_k}\|f_{r_k}\ast u\|_{L^1(D)} &\geq c_\eta \|u\|_{BV(\Real^n)}
        \label{BV-lower-bd}
        \\
        \|f_{r_k} \ast u\|_{L^1(D)} &\geq c_\eta \|u\|_{L^\infty(\Real^n)}
        \label{Linf-lower-bd}
        \\
        \|f_r\ast u\|_{L^2(D)}^2 &\geq c_\eta
        \|f_{r_k}\ast u\|_{L^1}\|f_{r_k}\ast u\|_{L^\infty}
        \label{interp-lower-bd}
    \end{align}
    Given $r_k$, define
    \[
        A_k' := \{x\in D \mid |f_{r_k}\ast u(x)| \geq \frac{1}{2}c_\eta \|u\|_{L^\infty}\}.
    \]
    The idea behind setting $A_k'$ this way is this: one should expect that
    $f_r\ast u(x)$ can only be large near large gradients of $u$.  Thus
    $A_k'$ should help us locate some of the mass of $|Du|$.  First we will
    show that in fact $A_k'$ is a set of an appreciable size.
    Indeed, using the definition of $A_k'$ we can write
    \[
        \|f_{r_k}\ast u\|_{L^2(D)}^2
        \leq \frac{1}{2}c_\eta \|u\|_{L^\infty}\|f_r\ast u\|_{L^1}
        + \int_{A_k'} |f_{r_k}\ast u(x)|^2\,dx.
    \]
    Applying the interpolation lower bound~\eqref{interp-lower-bd} on the left,
    rearranging, and bounding the integrand using the $L^\infty$ norm we
    arrive at
    \[
        \frac{1}{2}c_\eta \|f_r\ast u\|_{L^1} / \|f_r\ast u\|_{L^\infty}
        \leq |A_k'|
    \]
    Now using the $BV$ lower bound~\eqref{BV-lower-bd} and the upper
    bound~\eqref{Linf-conv-leq} this becomes
    \begin{equation}
        |A_k'| \geq \frac{r_k c_\eta^2}{2C} \|u\|_{BV} / \|u\|_{L^\infty}.
        \label{Ak-size-bd}
    \end{equation}
    This is the size one would expect if $u$ had a jump of size $\|u\|_{L^\infty}$
    along a surface of Hausdorff measure $\|u\|_{BV} / \|u\|_{L^\infty}$.  In
    this case $A_k'$ would be contained in a strip of width $r_k$ near the jump.
    To corroborate this story we would like to show that in fact the total
    variation $|Du|$ has appreciable mass near $A_k'$.  To do this we recall
    the pointwise bound~\eqref{local-conv-leq}
    \[
        |f_r\ast u(x)| \leq C r^{1-N} |Du|(B_{C(\eps)r}(x)) + \eps \|u\|_{L^\infty}.
    \]
    Setting $\eps < \frac{1}{4}$ and rearranging we have, for any $x\in A_k'$,
    \[
        \frac{c_\eta}{4}\|u\|_{L^\infty} \leq C r_k^{1-N} |Du|(B_{Cr_k}(x)).
    \]
    Integrating this over all $x\in A_k'$, using~\eqref{Ak-size-bd}
    \begin{align*}
        \frac{r_kc_\eta^3}{8C}\|u\|_{BV}
        &\leq C\int_{A_k'} r_k^{1-N} |Du|(B_{Cr_k}(x)) \,dx
        \\&\leq C r_k|Du| (A_k),
    \end{align*}
    where we have defined
    \[
        A_k = \bigcup_{x\in A_k'} B_{Cr_k}(x).
    \]
    Now relabeling constants (for we have no longer have need to be careful)
    we can summarize our main result from this step as
    \begin{equation}
        |Du|(A_k) \geq c'_\eta \|u\|_{BV}.
    \end{equation}

    \vskip 12pt \noindent \emph{Step 2: The limit $A$}.
    In this step we analyze the limsup $A$ of the sets $A_k$.  That is,
    we set
    \[
        A = \bigcap_{j=1}^\infty \bigcup_{k=j}^\infty A_k.
    \]
    By the definition of $A_k$, one has that $d(x,D) < Cr_k$ for each $x\in A_k$.
    Thus, $A\subset D$.  Moreover by Fatou's lemma we have that
    \begin{equation}
        \label{Du-bd}
        |Du|(A) \geq c'_\eta \|u\|_{BV}.
    \end{equation}
    It remains to show that $\Hausdorff^{n-1}(A)$ is small.  We will do this
    by using the sets $A_k'$ to construct efficient coverings of $A_k$.

    Indeed, let $\delta > 0$ be a radius for our covering.  For each
    $x\in A$, one has that $x\in A_k$ for infinitely many $k$.  In particular,
    for each point $x\in A$ one has that $x\in B_{Cr_m}(y)$ for some $y\in A_m'$
    and $Cr_m < \delta$.  Thus the set
    \[
        \mathcal{B}_\delta = \bigcup_{Cr_m < \delta}
        \{B_{Cr_m}(y) \mid y\in A_m'\}
    \]
    is a collection of balls of radius less than $\delta$ covering $A$.  Using
    the $5r$-covering lemma, can choose a subset
    $\{B_{Cr_i}(y_i)\}_{i=1}^N\}$ such that the balls are pairwise
    disjoint and such that $\{B_{5Cr_i}(y_i)\}_{i=1}^N\}$  covers $A$.
    Applying the pointwise bound~\eqref{local-conv-leq} to the balls in this
    cover we estimate
    \begin{align*}
        \Hausdorff^{n-1}_\delta(A)
        \leq \sum_{i=1}^N c_n (5Cr_i)^{n-1}
        &\leq \frac{C_\eta}{\|u\|_{L^\infty}} \sum_{i=1}^N |Du|(B_{Cr_i}(y_i))\\
        &\leq C_\eta \|u\|_{BV} / \|u\|_{L^\infty}.
    \end{align*}
    Since this holds for any $\delta>0$, we conclude that
    \begin{equation}
        \Hausdorff^{n-1}(A) \leq C_\eta \|u\|_{BV} / \|u\|_{L^\infty},
        \label{Amsr-bd}
    \end{equation}
    which we observe is what one would expect if $A$ were a portion of the
    jump set with nearly maximal jump magnitude.

    \vskip 12pt \noindent \emph{Step 3: Conclusion.}
    We now use the set $A$ to obtain the lower bound
    \[
        \|u\|_{BV}\|u\|_{L^\infty} \leq C(\eta)
        \int_A |u^+-u^-|^2\,d\Hausdorff^{n-1}.
    \]
    First observe that $\Hausdorff^{n-1}(A) < \infty$ implies that only
    the jump part $|u^+-u^-|\Hausdorff^{n-1}\mres J_u$ of the measure
    $|Du|$ contributes to $|Du|(A)$.
    Since $\Hausdorff^{n-1}(A)$ is small we should expect that at most points
    in $A$, the magnitude of the jump is nearly optimal.  Thus we define
    \[
        A' = \{x\in A \mid |u^+-u^-| \geq \frac{1}{2C(\eta)^2} \|u\|_{L^\infty}\}.
    \]
    Now we can bound
    \begin{align*}
        \|u\|_{BV}&\leq
        C(\eta) \int_A |u^+-u^-|\,d\Hausdorff^{n-1}
        \\&\leq \frac{1}{2}\|u\|_{L^\infty}\Hausdorff^{n-1}(A)
        + 2\|u\|_{L^\infty}\Hausdorff^{n-1}(A')
        \\&\leq \frac{1}{2}\|u\|_{BV} + 2C(\eta)\|u\|_{L^\infty} \Hausdorff^{n-1}(A').
    \end{align*}
    Rearranging we conclude that
    \[
        \Hausdorff^{n-1}(A') > c_\eta\|u\|_{BV} / \|u\|_{L^\infty}.
    \]
    Finally, we may bound
    \[
        \int_{A'} |u^+-u^-|^2\,d\Hausdorff^{n-1}
        \gtrsim  c_\eta \|u\|_{L^\infty}^2 \Hausdorff^{n-1}(A')
        \gtrsim c_\eta \|u\|_{L^\infty}\|u\|_{BV}.
    \]
\end{proof}

\subsection{The one-dimensional case.}
In this section we use Theorem~\ref{bound-by-jump} to prove an exact result
in one dimension.  We first perform a calculation in the case that $u\in BV(\Real)$
has finitely many jumps.
\begin{proposition}
    Let $u\in BV(\Real)$ be piecewise constant with jumps at finitely
    many points $x_j$, $1\leq j \leq N$.  Then
    \[
        \lim_{r\to 0} \frac{1}{r} \|f_r\ast u\|_{L^2}^2
        = c_f\sum_{j=1}^N |u^+(x_j) - u^-(x_j)|^2.
    \]
    \label{finite-jumps-lem}
\end{proposition}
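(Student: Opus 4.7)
The plan is to decompose $u$ into Heaviside steps and reduce everything to a scaling calculation involving the single function $g := f\ast H$, where $H$ is the one-dimensional Heaviside function.

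First, I would write $u(x) = c + \sum_{j=1}^N a_j H(x - x_j)$ with $a_j = u^+(x_j) - u^-(x_j)$. Since $\int f = 0$, convolution with $f_r$ annihilates the additive constant, so by linearity
\[
    f_r\ast u(y) = \sum_{j=1}^N a_j\, g_r(y - x_j), \qquad g_r(y) := (f_r\ast H)(y) = g(y/r).
\]
Expanding the square and rescaling then yields
\[
    \frac{1}{r} \|f_r\ast u\|_{L^2}^2 = \|g\|_{L^2}^2 \sum_{j=1}^N a_j^2 + \sum_{j \neq k} a_j a_k \int_{\Real} g(z)\, g\bigl(z - (x_k - x_j)/r\bigr)\, dz.
\]
The diagonal sum is exactly the formula claimed, with $c_f := \|g\|_{L^2}^2$; by the half-space calculation of Section~\ref{sofp-sec} specialized to dimension one, this also equals $-\tfrac{1}{2}\iint f(s)f(t)|s-t|\, ds\, dt$.

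The main obstacle is showing the off-diagonal cross terms vanish as $r\to 0$. Since $(x_k - x_j)/r \to \pm\infty$ for $j \neq k$, it suffices to show that $\int_\Real g(z)\, g(z - L)\, dz \to 0$ as $|L|\to\infty$. The key ingredient is a decay estimate on $g$: using the cancellation $\int f = 0$ to rewrite
\[
    g(y) = -\int_y^\infty f(t)\, dt \text{ for } y > 0, \qquad g(y) = \int_{-\infty}^y f(t)\, dt \text{ for } y < 0,
\]
and bounding each tail by $|y|^{-1}\int |t||f(t)|\, dt$, I obtain $|g(y)| \leq C(1+|y|)^{-1}$. Combined with the trivial bound $|g|\leq \|f\|_{L^1}$, splitting the integration region at $|z| = |L|/2$ gives $\int |g(z)g(z-L)|\, dz = O(\log|L|/|L|)$, which vanishes in the limit. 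Collecting the diagonal and off-diagonal contributions finishes the proof.
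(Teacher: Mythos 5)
Your proof is correct, but it takes a genuinely different route from the paper. The paper localizes: it partitions $\Real$ into intervals, one small interval around each jump point, kills the jump-free intervals with the locality estimate of Proposition~\ref{local-limit-lem}, and evaluates each jump interval by rescaling against a single Heaviside, defining $c_f$ as the resulting limit. You instead decompose $u$ itself as a constant plus a sum of Heaviside steps, expand the $L^2$ norm exactly, and show the off-diagonal correlations $\int g(z)g\bigl(z-(x_k-x_j)/r\bigr)\,dz$ vanish using the decay $|g(y)|\leq C(1+|y|)^{-1}$ of $g=f\ast H$, which you correctly derive from $\int f=0$ and $|x|f\in L^1$. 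What your approach buys is an explicit constant, $c_f=\|f\ast H\|_{L^2(\Real)}^2=-\tfrac12\iint f(s)f(t)|s-t|\,ds\,dt$, consistent with the paper's $F(\nu)$ in dimension one, and independence from the locality machinery; the paper's version is shorter given that Proposition~\ref{local-limit-lem} is already in hand and it matches the covering-style arguments used elsewhere. One small point: your splitting at $|z|=|L|/2$ only handles the inner region as stated, since on the outer region the bound $|g(z)|\leq C/|L|$ must be paired with an integral of $|g(z-L)|$ over an unbounded set, where the $1/(1+|\cdot|)$ decay alone is not integrable; this is easily repaired either by a second split (isolating $|z-L|\leq|L|/2$) or simply by Cauchy--Schwarz, since your decay bound gives $g\in L^2$ and $\int_{|z|>|L|/2}|g|^2\to 0$, so the cross terms vanish regardless and the conclusion stands.
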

\begin{proof}
    Choose $\eps < \frac{1}{2} \min_{i,j} |x_i-x_j|$ and partition $\Real$ into
    a collection of closed intervals $\{I_k\}_{k=1}^{2N+1}$ such that
    \[
        I_m = [x_m-\eps, x_m+\eps]
    \]
    for $1\leq m\leq N$.  The remaining intervals do not contain any of the
    jump points $x_j$, so $u$ is contant on such intervals.
    Thus by Proposition~\ref{local-limit-lem},
    \[
        \lim_{r\to 0}\frac{1}{r}\|f_r\ast u\|_{L^2(I_n)}^2 = 0
    \]
    for $n > N$.  On the other hand by scaling it must be that
    \[
        \lim_{r\to 0}\frac{1}{r}\|f_r\ast u\|_{L^2(I_m)}^2
        = |u^+(x_m)-u^-(x_m)|^2 \lim_{r\to 0}\frac{1}{r}\|f_r\ast H\|_{L^2((-1,1))}^2
    \]
    where $H$ is the heaviside function on $\Real$.  Taking
    \[
        c_f = \lim_{r\to 0}\frac{1}{r}\|f_r\ast H\|_{L^2((-1,1))}^2
    \]
    concludes the proof.
\end{proof}

\begin{theorem}
    Let $u\in BV(\Real)$ and $f$ be a kernel as in Lemma~\ref{basic-ests}.
    Then for some constant $c_f$,
    \[
        \lim_{r\to 0^+} \frac{1}{r} \|f_r\ast u\|_{L^2}^2
        = c_f \sum_{x \in J_u} |u^+(x) - u^-(x)|^2
    \]
    \label{one-dim-thm}
\end{theorem}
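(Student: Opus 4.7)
The plan is to split $u$ into a piecewise constant function carrying the largest jumps, for which Proposition~\ref{finite-jumps-lem} provides the exact asymptotic, plus a remainder whose contribution is controlled by Theorem~\ref{jump-bound-thm} and then spliced back in via Proposition~\ref{continuity-lem}.

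Since $u\in BV(\Real)$ we have $u\in L^\infty(\Real)$, the jump set $J_u$ is at most countable, and $\sum_{x\in J_u}|u^+(x)-u^-(x)|\leq \|u\|_{BV}$. In particular the series $S:=\sum_{x\in J_u}|u^+(x)-u^-(x)|^2$ converges. Given $\delta>0$, I would enumerate $J_u$ in decreasing order of jump magnitude and keep the first $N$ jump points $x_1,\ldots,x_N$ with $N$ large enough that $\sum_{j>N}|u^+(x_j)-u^-(x_j)|^2<\delta$. I would then construct an explicit piecewise constant $u_N\in BV\cap L^\infty(\Real)$ with jump exactly $u^+(x_j)-u^-(x_j)$ at $x_j$ for $1\le j\le N$ and no other jumps, and set $v:=u-u_N$. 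By construction $v\in BV\cap L^\infty(\Real)$ with $\|v\|_{BV}\le 2\|u\|_{BV}$, $\|v\|_{L^\infty}\le 2\|u\|_{L^\infty}$, and $\sum_{x\in J_v}|v^+(x)-v^-(x)|^2<\delta$.

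Next, applying Theorem~\ref{jump-bound-thm} to $v$ with $D=\Real$ and any $\eta>0$ yields
\[
\limsup_{r\to 0^+}\frac{1}{r}\|f_r\ast v\|_{L^2}^2
\leq 4\eta\,\|u\|_{BV}\|u\|_{L^\infty}+C(\eta)\,\delta.
\]
Choosing $\eta$ small and then $\delta$ small, the right-hand side is as small as desired. With this in hand I would invoke Proposition~\ref{continuity-lem}, taking the role of ``$u$'' there to be $u_N$ and the role of ``$v$'' to be our $v$: for every $\eps>0$ there is a threshold $\delta_0=\delta_0(u_N,\eps)$ such that once $\limsup_{r\to 0^+}\tfrac{1}{r}\|f_r\ast v\|_{L^2}^2<\delta_0$, one has
\[
\limsup_{r\to 0^+}\frac{1}{r}\bigl|\|f_r\ast u\|_{L^2}^2-\|f_r\ast u_N\|_{L^2}^2\bigr|<\eps.
\]
Proposition~\ref{finite-jumps-lem} then gives $\lim_{r\to 0^+}\tfrac{1}{r}\|f_r\ast u_N\|_{L^2}^2=c_f\sum_{j=1}^N|u^+(x_j)-u^-(x_j)|^2$, which by choice of $N$ differs from $c_f S$ by at most $c_f\delta$. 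Sending $\eps,\delta\to 0$ forces both $\limsup$ and $\liminf$ of $\tfrac{1}{r}\|f_r\ast u\|_{L^2}^2$ to equal $c_f S$, proving the theorem.

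The main obstacle is the control of the small-jump tail $v$: the Cantor and absolutely continuous parts of $Du$ as well as infinitely many small jumps all live inside $v$, and a naive application of the $BV$ bound from Lemma~\ref{basic-ests} would lose a factor of $\|v\|_{BV}\|v\|_{L^\infty}$ that is not small. This is precisely what Theorem~\ref{jump-bound-thm} fixes, by allowing one to pay a small fraction $\eta$ of the $BV\cdot L^\infty$ product in exchange for a clean $\ell^2$-bound on the surviving jumps; the $\ell^2$-summability $S<\infty$ then provides the quantitative smallness that makes $\delta\to 0$ permissible. Once that input is accepted, the remaining steps are essentially bookkeeping.
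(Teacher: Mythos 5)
Your proposal is correct and follows essentially the same route as the paper: decompose $u$ into a piecewise-constant piece carrying finitely many large jumps (handled by Proposition~\ref{finite-jumps-lem}), bound the remainder via Theorem~\ref{jump-bound-thm}, and splice with Proposition~\ref{continuity-lem}. The only cosmetic difference is that the paper further splits its remainder into a continuous part and a small-jump part and handles the latter by Lemma~\ref{basic-ests} together with $\ell^1$-summability of the jumps, whereas you apply Theorem~\ref{jump-bound-thm} once to the whole remainder, paying $\eta\,\|u\|_{BV}\|u\|_{L^\infty}$ and relying on $\ell^2$-summability of the jump tail---a minor streamlining that works equally well.
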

\begin{proof}
    Let $u\in BV(\Real)$.  We will decompose $u$ into a main
    part that is a piecewise continuous function with finitely many jumps,
    a continuous part that contributes nothing according to
    Theorem~\ref{jump-bound-thm}, and an error term that is small in $BV$.

    For each jump height $\delta>0$,
    define the set $J_\delta$ of jumps of height at least $\delta$:
    \[
        J_\delta = \{x\in J_u \mid |u^+(x) - u^-(x)| > \delta\}.
    \]
    Then one can decompose $Du$ as
    \[
        Du = Du\mres J_\delta + Du^c + Du\mres (J_u\setminus J_\delta).
    \]
    Integrating, we can define the decomposition
    \[
        u = u_b + u_c + u_s
    \]
    where the subscripts refer to the parts that are `big', `continuous',
    and `small'.  We have, using Proposition~\ref{finite-jumps-lem},
    Theorem~\ref{jump-bound-thm}, and Lemma~\ref{basic-ests} respectively:
    \begin{align*}
        \lim_{r\to 0}\frac{1}{r}\|f_r\ast u_b\|_{L^2}^2
        &= \sum_{J_\delta} |u^+(x)-u^-(x)|^2 \\
        \lim_{r\to 0}\frac{1}{r}\|f_r\ast u_c\|_{L^2}^2 &= 0 \\
        \limsup_{r\to 0}\frac{1}{r}\|f_r\ast u_s\|_{L^2}^2 &\leq
        C|Du|(J_u\setminus J_\delta)^2.\\
    \end{align*}
    In the last bound the square comes from the fact that
    $\|u_s\|_{L^\infty}\leq \|u_s\|_{BV}$.  The theorem follows from applying
    the continuity from Proposition~\ref{continuity-lem} and sending $\delta\to 0$.
\end{proof}

\bibliographystyle{alpha}
\bibliography{./refs}

\end{document}